\documentclass[a4paper,11pt,reqno]{amsart}

\usepackage[left=1in,right=1in,top=1in,bottom=1in]{geometry}
\usepackage{enumitem}
\usepackage{amssymb,amsmath,latexsym,amsfonts,amsbsy,amsthm,mathtools,color}
\usepackage{float}
\usepackage[colorlinks=true,citecolor=blue,filecolor=blue,linkcolor=blue,urlcolor=blue]{hyperref}

\usepackage[T1]{fontenc}
\usepackage{lmodern}
\usepackage[expansion=false]{microtype}
\usepackage{graphicx}
\usepackage{upgreek}
\usepackage{mathrsfs}
\usepackage{needspace}

\newcommand{\be}{\begin{equation}}
\newcommand{\ee}{\end{equation}}

\newcommand{\cE}{\mathcal{E}}

\newcommand{\cL}{\mathcal{L}}

\newcommand{\R}{\mathbb{R}}

\newcommand{\abs}[1]{\left\lvert#1\right\rvert}
\newcommand{\norm}[1]{\left\lVert#1\right\rVert}

\newcommand{\dd}{\,\mathrm{d}}

\DeclareMathOperator{\dist}{dist}

\DeclareMathOperator{\supp}{supp}

\DeclareMathOperator{\tr}{tr}

\numberwithin{equation}{section}

\theoremstyle{plain}
\newtheorem{thm}{Theorem}[section]
\newtheorem{cor}[thm]{Corollary}

\newtheorem{lem}[thm]{Lemma}
\newtheorem{prop}[thm]{Proposition}
\newtheorem{assum}[thm]{Assumption}

\theoremstyle{definition}
\newtheorem{defn}[thm]{Definition}

\theoremstyle{remark}
\newtheorem{rem}[thm]{Remark}

\newcommand{\eps}{\varepsilon}
\newcommand{\cH}{\mathcal H}
\newcommand{\cM}{\mathcal M}
\newcommand{\Sstar}{\mathfrak S_\star}

\newcommand{\nustar}{\upnu_\star}
\newcommand{\zetastar}{\upzeta_\star}

\newcommand{\Vbold}{\mathbf V}
\newcommand{\nOmega}{\vec n_\Omega}
\newcommand{\diver}{\operatorname{div}}
\newcommand{\Per}{\operatorname{Per}}

\title[Boundary vectorial Allen--Cahn asymptotics]
{Boundary asymptotics for two-dimensional vectorial Allen--Cahn systems}
\author{Zhiyuan Dai}
\address{School of Mathematical Sciences, Zhejiang University, Hangzhou 310058, China}
\email{12635038@zju.edu.cn}

\author{Haotong Fu}
\address{School of Mathematical Sciences, Peking University, Beijing 100871, China}
\email{2301110012@pku.edu.cn}

\author{Huaijie Wang}
\address{School of Mathematical Sciences, Peking University, Beijing 100871, China}
\email{huaijie\_wang@163.com}

\author{Wei Wang}
\address{School of Mathematical Sciences, Peking University, Beijing 100871, China}
\email{wwmath166@outlook.com}
\email{2201110024@stu.pku.edu.cn}
\date{August 30, 2026}

\begin{document}

\begin{abstract}
We study asymptotic properties of critical points of the two-dimensional vectorial Allen–Cahn energy with finitely many non-degenerate wells, subject to a homogeneous Neumann boundary condition. For any sequence with uniformly bounded energy, we prove that the limiting full and potential measures are supported on a closed
countably $1$-rectifiable set up to the boundary and satisfy the discrepancy
relations; the potential measure defines a free-boundary stationary
rectifiable varifold. Boundary mass may occur: weights are constant on regular boundary arcs, finite-type junctions obey projected balance, and the sole non-boundary branch meets the boundary orthogonally. This provides a Neumann-boundary extension of Bethuel’s planar interior theory. The key idea is to read the boundary geometry from the limiting stress–energy tensor, which identifies the potential-energy measure as the stationary interfacial measure even in the presence of boundary concentration.
\end{abstract}

\subjclass[2020]{Primary 35B25; Secondary 35J61, 49Q20}
\keywords{Vectorial Allen--Cahn system, Neumann boundary condition, singular
perturbation, rectifiable varifold, boundary concentration}

\maketitle

\section{Introduction}
\label{sec:introduction}

Let $\Omega\subset\R^2$ be a bounded domain with $C^3$ boundary. For each small parameter $\eps\in (0,1]$, we consider the Allen-Cahn energy functional
\begin{equation}\label{eq:intro-energy}
 \cE_\eps(u;\Omega)
 :=\int_\Omega e_\eps(u)\,\dd x,
 \quad
 e_\eps(u)
 :=\frac{\eps}{2}\abs{\nabla u}^2+\frac{1}{\eps} V(u),
\end{equation}
where $u:\Omega\to\mathbb R^k$, $V$ is a smooth multi-well potential with non-degenerate wells, satisfying
\begin{equation}\label{eq:intro-vacuum}
 V\in C^\infty(\R^k;[0,+\infty)),
 \quad
 \Sigma:=V^{-1}(0)=\{\upsigma_1,\ldots,\upsigma_q\},
 \quad q\geq2.
\end{equation}
The precise hypotheses on $V$ are collected
in Assumption~\ref{assum:potential} below. The scalar double-well case, corresponding to $k=1$ and $q=2$, is the
classical prototype and will serve as the main point of comparison.

The energies considered here originate in the diffuse-interface models of
Cahn--Hilliard and Allen--Cahn
\cite{CahnHilliard1958,AllenCahn1979}.  For minimizing sequences, the scalar
sharp-interface limit is described by the Modica--Mortola theorem
\cite{ModicaMortola1977,Modica1987}; the multi-well vectorial extensions of
Baldo and Fonseca--Tartar identify the corresponding weighted partition
problem \cite{Baldo1990,FonsecaTartar1989}.  The theory of arbitrary critical points was likewise first developed in the scalar double-well setting. Modica's gradient estimate controls the
discrepancy between gradient and potential energy \cite{Modica1985}.
Hutchinson and Tonegawa developed the bounded-energy argument leading to an
integral stationary varifold \cite{HutchinsonTonegawa2000}.  For the elliptic
Neumann problem, Tonegawa obtained boundary monotonicity under a convexity
assumption on the domain \cite{Tonegawa2003}.

For critical points, the vectorial problem is substantially different: there is no ordering of
the target, no scalar Modica inequality, and no general equipartition
identity.  Bethuel's two-dimensional theorem \cite{Bethuel2025} replaces
these scalar tools by a novel $\eps$-level PDE analysis and identifies a
countably $1$-rectifiable concentration set.  In the parabolic, well-prepared
two-phase regime, Moser proved convergence for scalar and vector-valued
Neumann problems to mean-curvature flow with a right-angle contact condition
\cite{Moser2023}.  The present work instead treats arbitrary bounded-energy
elliptic critical points.

Boundary convergence is already understood more fully in the scalar theory.
Mizuno and Tonegawa proved convergence in the parabolic setting to a Brakke
flow satisfying a generalized right-angle boundary condition
\cite{MizunoTonegawa2015}, extending Ilmanen's measure-theoretic approach
\cite{Ilmanen1993}; Kagaya later removed the convexity restriction from
related boundary arguments \cite{Kagaya2019}.  For elliptic critical points,
Li, Parise, and Sarnataro proved convergence up to the boundary to an
integer-rectifiable varifold stationary with free boundary
\cite{LiPariseSarnataro2024}.  Boundary-supported mass may nevertheless
occur, as shown by the scalar solutions in
\cite{MalchiodiNiWei2007,MalchiodiWei2007}.  This makes free-boundary
varifolds \cite{GruterJost1986,DeMasi2021,Simon1983} the appropriate
geometric framework.

\subsection{Setting and main results}
\label{subsec:intro-problem}
 Let $\nOmega$ denote the outer unit normal to $\partial\Omega$. We work directly with finite-energy weak solutions of the Euler-Lagrange equation
\begin{equation}\label{eq:intro-system}
 \begin{cases}
 -\eps\Delta u_\eps+\eps^{-1}\nabla V(u_\eps)=0
     &\text{in }\Omega,\\
 \partial_{\nOmega}u_\eps=0
     &\text{on }\partial\Omega.
 \end{cases}
\end{equation}
More precisely, a finite-energy weak solution is a map
$u_\eps\in H^1(\Omega;\R^k)$ such that
$\nabla V(u_\eps)\in L^1(\Omega;\R^k)$ and, for every
$\varphi\in H^1(\Omega;\R^k)\cap L^\infty(\Omega;\R^k)$, it satisfies
\begin{equation}\label{eq:intro-weak-system}
 \int_\Omega
 \left[
  \eps\,\partial_i u_\eps^\alpha\partial_i\varphi^\alpha
  +\eps^{-1}\partial_\alpha V(u_\eps)\varphi^\alpha
 \right]\dd x=0.
\end{equation}

The fundamental tensor in the paper is the \emph{stress--energy tensor}
\begin{equation}\label{eq:intro-stress-energy-tensor}
 T_\eps(u)
 :=e_\eps(u)I_2-\eps(\nabla u)^T\nabla u,
 \quad
 (T_\eps(u))_{ij}
 =e_\eps(u)\delta_{ij}
  -\eps\,\partial_i u\cdot\partial_j u.
\end{equation}
A direct calculation yields
\begin{equation}\label{eq:intro-stress-divergence}
 \partial_j(T_\eps(u))_{ij}
 =\left[-\eps\Delta u+\eps^{-1}\nabla V(u)\right]
   \cdot\partial_i u.
\end{equation}
Hence every solution of \eqref{eq:intro-system} satisfies
\begin{equation}\label{eq:intro-strong-stationary-identity}
 \diver T_\eps(u_\eps)=0\quad\text{in }\Omega.
\end{equation}
We call \eqref{eq:intro-strong-stationary-identity}, and its weak form
below, the \emph{stationary identity}.  If
$X\in C^1(\overline\Omega;\R^2)$, then
\begin{equation}\label{eq:intro-DX}
 DX:=(\partial_iX^j)_{i,j=1}^2.
\end{equation}
The Neumann condition implies
$T_\eps(u_\eps)\nOmega=e_\eps(u_\eps)\nOmega$ on $\partial\Omega$.
Consequently, whenever $X\cdot\nOmega=0$ on $\partial\Omega$,
\begin{equation}\label{eq:intro-stationary-identity}
 \int_\Omega T_\eps(u_\eps):DX\,\dd x=0.
\end{equation}
Identity \eqref{eq:intro-stationary-identity} is the analytic source of the
free-boundary stationarity obtained in the limit.

We study sequences $\eps_j\downarrow0$ satisfying the natural interfacial
bound
\begin{equation}\label{eq:intro-energy-bound-overview}
 \sup_j\cE_{\eps_j}(u_{\eps_j};\Omega)<+\infty.
\end{equation}
Theorem~\ref{thm:main} states that the full and potential energy measures
concentrate on a closed countably $1$-rectifiable set
$\Sstar\subset\overline\Omega$.  Their limiting densities obey Bethuel's
tensorial discrepancy relations up to the boundary, and the potential-energy
measure determines a rectifiable varifold stationary with free boundary.
Boundary-supported mass is allowed; thus free-boundary stationarity, rather
than a blanket pointwise orthogonality assertion, is the natural conclusion.

We use the following assumptions throughout the paper.

\begin{assum}\label{assum:potential}
The potential $V\in C^\infty(\R^k;[0,\infty))$ satisfies:
\begin{enumerate}[label=$(\theenumi)$]
\item the vacuum set is finite,
\[
 \Sigma:=V^{-1}(0)=\{\upsigma_1,\ldots,\upsigma_q\},
 \quad q\geq2;
\]
\item $D^2V(\upsigma_i)$ is positive definite for every $i$;
\item there are $R_\infty,\alpha_\infty>0$ such that
\[
 y\cdot\nabla V(y)\geq\alpha_\infty\abs{y}^2
 \quad\text{for }\abs{y}\geq R_\infty,
 \quad
 V(y)\to\infty\quad\text{as }\abs{y}\to\infty.
\]
\end{enumerate}
\end{assum}

For a solution $u_\eps$, define the Radon measures
\begin{equation}\label{eq:intro-measures}
 \begin{aligned}
 \upnu_\eps
 &:=\left[\frac{\eps}{2}\abs{\nabla u_\eps}^2
       +\frac{1}{\eps}V(u_\eps)\right]\dd x,\\
 \upzeta_\eps&:=\frac{1}{\eps}V(u_\eps)\dd x,\\
 \upmu_{\eps,ij}&:=\eps\,\partial_i u_\eps\cdot
       \partial_j u_\eps\dd x.
\end{aligned}
\end{equation}
Set
\begin{equation}\label{eq:intro-stress-measure}
 \upmu_\eps:=(\upmu_{\eps,ij})_{i,j=1}^2,
 \quad
 T_\eps:=\upnu_\eps I_2-\upmu_\eps.
\end{equation}
Thus $T_\eps$ is the matrix-valued Radon measure
$T_\eps(u_\eps)\,\dd x$ associated with the stress--energy tensor
\eqref{eq:intro-stress-energy-tensor}; we use the same letter for the tensor
field and its associated measure when no confusion is possible.
Whenever
$\upnu_{\eps_j}\stackrel{*}{\rightharpoonup}\nustar$ and
$\upmu_{\eps_j,ij}\stackrel{*}{\rightharpoonup}\upmu_{\star,ij}$, write
\begin{equation}\label{eq:intro-limit-stress}
 \upmu_\star:=(\upmu_{\star,ij})_{i,j=1}^2,
 \quad
 T:=\nustar I_2-\upmu_\star.
\end{equation}
For a finite Radon measure $\lambda$ on $\overline\Omega$,
\begin{equation}\label{eq:intro-measure-notation}
 (\lambda\llcorner A)(B):=\lambda(A\cap B),
 \quad
 \supp\lambda
 :=\overline\Omega\backslash
 \bigcup\{G\subset\overline\Omega:G\text{ open},\ \lambda(G)=0\}.
\end{equation}
Weak-star convergence $\lambda_j\stackrel{*}{\rightharpoonup}\lambda$ means
\begin{equation}\label{eq:intro-weak-star-definition}
 \int_{\overline\Omega}\phi\,\dd\lambda_j
 \to
 \int_{\overline\Omega}\phi\,\dd\lambda
 \quad\forall\phi\in C(\overline\Omega).
\end{equation}

A Borel set $\mathcal S\subset\R^2$ is \emph{countably
$1$-rectifiable} if there are Lipschitz maps $f_m:\R\to\R^2$ such that
\begin{equation}\label{eq:intro-rectifiable-definition}
 \cH^1\left(\mathcal S\backslash\bigcup_{m=1}^\infty f_m(\R)\right)=0,
\end{equation}
where $\cH^1$ denotes one-dimensional Hausdorff measure.  Its approximate
tangent line, defined at $\cH^1$-almost every $x\in\mathcal S$, is denoted by
$T_x\mathcal S$.  If $E\subset\Omega$ is measurable, its relative perimeter
is
\begin{equation}\label{eq:intro-perimeter-definition}
 \Per(E;\Omega):=\abs{D\mathbf1_E}(\Omega).
\end{equation}
A family $(E_\alpha)_{\alpha=1}^q$ is a finite-perimeter partition when the
sets are pairwise disjoint up to $\cL^2$-null sets, cover $\Omega$ up to a
$\cL^2$-null set, and
$\sum_{\alpha=1}^q\Per(E_\alpha;\Omega)<+\infty$.

For a positive function
$\Uptheta\in L^1_{\mathrm{loc}}(\cH^1\llcorner\mathcal S)$ on a countably
rectifiable $\mathcal S$, the corresponding rectifiable $1$-varifold is denoted by
$\Vbold(\mathcal S,\Uptheta)$ and has weight
\begin{equation}\label{eq:intro-varifold-definition}
 \norm{\Vbold(\mathcal S,\Uptheta)}
 :=\Uptheta\,\cH^1\llcorner\mathcal S.
\end{equation}
Its first variation is
\begin{equation}\label{eq:intro-first-variation-definition}
 \delta\Vbold(\mathcal S,\Uptheta)(X)
 :=\int_{\mathcal S}\diver_{T_x\mathcal S}X\,
       \Uptheta\,\dd\cH^1,
 \quad
 \diver_{T_x\mathcal S}X:=(\tau\otimes\tau):DX,
\end{equation}
where $\tau$ is either unit vector spanning $T_x\mathcal S$; the expression
is independent of its sign.  The varifold is \emph{stationary with free
boundary} if this first variation vanishes for every
$X\in C^1(\overline\Omega;\R^2)$ with
$X\cdot\nOmega=0$ on $\partial\Omega$.
For a line $L=\operatorname{span}\{\tau\}$ we write
$P_L:=\tau\otimes\tau$ for the orthogonal projection onto $L$; in
particular,
\begin{equation}\label{eq:intro-boundary-projection}
 P_{T_p\partial\Omega}
 :=\tau_{\partial\Omega}(p)\otimes\tau_{\partial\Omega}(p).
\end{equation}

\begin{defn}\label{defn:finite-type-boundary}
Let $\mathcal S\subset\overline\Omega$ be countably $1$-rectifiable and let
$\lambda=\vartheta\,\cH^1\llcorner\mathcal S$.  A point
$p\in\mathcal S\cap\partial\Omega$ is a \emph{finite-type boundary junction
of $(\mathcal S,\lambda)$} if there are a neighborhood $U$ of $p$ and
finitely many regular embedded $C^1$ half-arcs
$\gamma_\ell:[0,r_\ell)\to\overline\Omega$ such that
$\gamma_\ell(0)=p$, the open arcs are pairwise disjoint, and
\[
 \mathcal S\cap U=\bigcup_\ell\gamma_\ell([0,r_\ell));
\]
thus no residual branch or weight is present in $U$.
The outgoing tangent is
$\tau_\ell:=\frac{\gamma_\ell'(0)}{\abs{\gamma_\ell'(0)}}$.  A
\emph{boundary-supported regular arc} is an open $C^1$ subarc of
$\mathcal S\cap\partial\Omega$ that has a neighborhood containing no other branch
of $\mathcal S$.
\end{defn}

\Needspace{14\baselineskip}
\par\addvspace{\medskipamount}
\begin{thm}[Neumann-boundary compactness and stationarity]
\label{thm:main}
Let $\Omega\subset\R^2$ be a bounded $C^3$ domain, let $V$ satisfy
Assumption~\ref{assum:potential}, and let
$\eps_j\in(0,1]$ satisfy $\eps_j\downarrow0$.  Let
$u_j:=u_{\eps_j}\in H^1(\Omega;\R^k)$ be weak solutions of
\eqref{eq:intro-system}.  Suppose that
\begin{equation}\label{eq:intro-energy-bound}
\cE_{\eps_j}(u_j;\Omega)\leq M_0
\quad\text{for some }M_0<+\infty.
\end{equation}
Then, after passing to a subsequence, there are a closed countably
$1$-rectifiable set $\Sstar\subset\overline\Omega$, a finite-perimeter
partition $(E_\alpha)_{\alpha=1}^q$ of $\Omega$, Radon measures
$\nustar,\zetastar$, and finite signed Radon measures
$\upmu_{\star,ij}$ on $\overline\Omega$ with the
following properties.

\begin{enumerate}[label=$(\theenumi)$]
\item The phase fields satisfy
\[
 u_j\to u_0:=\sum_{\alpha=1}^q\upsigma_\alpha\mathbf1_{E_\alpha}
 \quad\text{in }L^1(\Omega),
\]
with
$\sum_{\alpha=1}^q\Per(E_\alpha;\Omega)\leq C(V)M_0$, and the convergence is uniform on
compact subsets of $\overline\Omega\backslash \Sstar$ to a locally constant
map $\bar u_0:\overline\Omega\backslash \Sstar\to\Sigma$ that agrees almost
everywhere in $\Omega\backslash \Sstar$ with
$\sum_{\alpha=1}^q\upsigma_\alpha\mathbf1_{E_\alpha}$.

\item The concentration set satisfies
\[
 \Sstar=\supp\nustar=\supp\zetastar,
 \quad
 \cH^1(\Sstar)\leq C(V,\Omega)M_0.
\]

\item The measures in \eqref{eq:intro-measures} converge weakly star to
$\nustar$, $\zetastar$, and $\upmu_{\star,ij}$, respectively, and
\begin{equation}\label{eq:intro-absolute-continuity}
 \nustar=e\,\cH^1\llcorner\Sstar,
 \quad
 \zetastar=\Uptheta\,\cH^1\llcorner\Sstar,
 \quad
 \upmu_{\star,ij}=m_{ij}\,\cH^1\llcorner\Sstar.
\end{equation}
There are constants $c_0=c_0(V,\Omega)>0$ and
$C_0,C_1<\infty$, depending only on $V$, $\Omega$, and $M_0$, such that
\begin{equation}\label{eq:intro-density-comparison}
 c_0\leq\Uptheta\leq e\leq C_0\Uptheta\leq C_1,
 \quad
 \abs{m_{ij}}\leq2e
 \quad\text{for }\cH^1\text{-almost every }x\in \Sstar.
\end{equation}

\item At $\cH^1$-almost every point of $\Sstar$, let $(\tau,n)$ denote the
approximate tangent-normal frame of $\Sstar$.  For
$a,b\in\{\tau,n\}$ set $m_{ab}:=a^Tm b$.  Then
\begin{equation}\label{eq:intro-discrepancy}
 2\Uptheta=m_{nn}-m_{\tau\tau},
 \quad
 m_{n\tau}=0,
 \quad
 e=m_{nn}.
\end{equation}

\item The rectifiable varifold $\Vbold(\Sstar,\Uptheta)$ is stationary with
free boundary:
\begin{equation}\label{eq:intro-free-stationarity}
 \int_{\Sstar}\diver_{T_x\Sstar}X\,\dd\zetastar(x)=0
\end{equation}
for every $X\in C^1(\overline\Omega;\R^2)$ satisfying
$X\cdot\nOmega=0$ on $\partial\Omega$.

\item In the interior, the local straight-segment and constant-density
description of \cite[Theorem 1.3]{Bethuel2025} holds away from an
$\cH^1$-null set.  On a boundary-supported regular arc with no incident
interior branch, $\Uptheta$ has a positive constant representative.  At a
finite-type boundary junction $p$ of $(\Sstar,\zetastar)$ in the sense of
Definition~\ref{defn:finite-type-boundary}, each incident punctured branch
carries a positive constant weight $\Uptheta_\ell$, and
\begin{equation}\label{eq:intro-boundary-balance}
 P_{T_p\partial\Omega}
 \left(\sum_\ell\Uptheta_\ell\tau_\ell\right)=0.
\end{equation}
In particular, if the decomposition in
Definition~\ref{defn:finite-type-boundary} consists of exactly one
non-boundary half-arc and no boundary-supported or additional branch, then
that half-arc meets $\partial\Omega$ orthogonally.
\end{enumerate}
\end{thm}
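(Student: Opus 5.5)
The plan is to reduce the boundary problem to Bethuel's interior theory by a reflection/flattening argument, and then read the boundary geometry off the limiting stress--energy tensor. First, uniform bounds. Assumption~\ref{assum:potential}(3) and the maximum principle, applied to $\abs{u_j}^2$ with the Neumann condition, give $\abs{u_j}\le R_\infty$. Elliptic regularity in rescaled variables $x=p+\eps_j y$ then gives $\eps_j\abs{\nabla u_j}\le C$ up to $\partial\Omega$; the $C^3$ boundary is used here. The $L^1$ compactness and the partition $(E_\alpha)$ with $\sum_\alpha\Per(E_\alpha;\Omega)\le C(V)M_0$ follow from the Modica--Mortola/Baldo argument applied to $d_\alpha(u_j)$, where $d_\alpha$ is the geodesic distance to $\upsigma_\alpha$ in the degenerate metric $\sqrt{2V}$. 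By compactness of Radon measures on $\overline\Omega$ we extract weak-star limits $\nustar,\zetastar,\upmu_{\star,ij}$. The bound $\abs{m_{ij}}\le2e$ is then immediate from $\abs{\eps\partial_iu\cdot\partial_ju}\le2e_\eps$.

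Next, near $p\in\partial\Omega$ we straighten the boundary by $C^3$ Fermi coordinates and reflect $u_j$ evenly across the boundary. The Neumann condition makes the reflected map a weak solution of a system $-\eps\,\partial_a(g^{ab}\sqrt g\,\partial_b u)+\eps^{-1}\sqrt g\,\nabla V(u)=0$ with $C^2$ coefficients, equal to the identity on the boundary. I will rerun Bethuel's $\eps$-level analysis for such variable-coefficient systems; the errors are $O(r)$ perturbations at scale $r$. The needed ingredients are the clearing-out lemma (small $\upnu_\eps(B_r)/r$ forces $u$ to be uniformly close to $\Sigma$ on $B_{r/2}$, and then $\upzeta_\eps$ decays exponentially), the almost-monotonicity of $r^{-1}\upnu_\eps(B_r(x))$ with an $e^{Cr}$ factor, and the control of the potential by the discrepancy. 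Together these give the conclusions of items (2)--(3): $\Sstar=\supp\nustar=\supp\zetastar$ is closed, $c_0\le\Theta$, the measures are absolutely continuous with densities comparable to $\cH^1\llcorner\Sstar$, and $\cH^1(\Sstar)\le CM_0$ by a covering argument. Uniform convergence off $\Sstar$ also follows from clearing-out. Rectifiability comes from Bethuel's argument in the reflected picture, or from Preiss/Allard-type rectifiability once density bounds and stationarity are available. The discrepancy relations of item (4) are pointwise blow-up statements: at $\cH^1$-a.e. point, the tangent measure is a constant multiple of a line, and the divergence-free limit tensor $T$ forces $Tn=0$ on the line. This gives $m_{n\tau}=0$ and $e=m_{nn}$. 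Combined with $\tr T=2\nustar-\tr\upmu_\star=2\zetastar$, it gives $2\Uptheta=m_{nn}-m_{\tau\tau}$, since $\tr T$ equals $e-m_{\tau\tau}$ along the line. Boundary points form a set where tangent lines are approximately tangent to $\partial\Omega$ or transversal, and I will handle them in the reflected picture, where the limit tensor is still divergence-free.

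The stationarity in item (5) is obtained by passing to the limit in \eqref{eq:intro-stationary-identity}: we get $\int T:DX=0$ for tangential $X$. By item (4), $T=(e-m_{nn})I+\ldots$ reduces a.e. to $\Uptheta\,\tau\otimes\tau$, using $T_{\tau\tau}=e-m_{\tau\tau}=\Uptheta$ and $T_{nn}=T_{n\tau}=0$. Hence $T=\Uptheta\,\tau\otimes\tau\,\cH^1\llcorner\Sstar$, which is exactly \eqref{eq:intro-free-stationarity}. This identification is the key point, because it also holds on boundary-supported mass. For item (6), the interior part is Bethuel's theorem. On a boundary regular arc, we test with tangential fields $X=\phi\,\tau_{\partial\Omega}$. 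Since $\diver_{\partial\Omega}X=\phi'$ along the arc, free stationarity gives $\int\Uptheta\phi'\,ds=0$, so $\Uptheta$ is constant. At a finite-type junction, constancy of weights on each branch follows similarly: interior branches use Bethuel, and branches are straightened by first variation. Testing then with $X=\eta_\delta\,\tau_{\partial\Omega}$, a cutoff of $p$ of scale $\delta$ extended tangentially, and letting $\delta\to0$, yields $\sum_\ell\Uptheta_\ell\,\tau_\ell\cdot\tau_{\partial\Omega}(p)=0$, which is \eqref{eq:intro-boundary-balance}. With a single non-boundary half-arc this gives $\tau_1\perp T_p\partial\Omega$.

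The main obstacle is the boundary version of Bethuel's $\eps$-level estimates, namely clearing-out and the lower density bound $\Uptheta\ge c_0$ at boundary points. Reflection only produces variable coefficients, but Bethuel's argument uses the exact divergence-free structure and Pohozaev identities on discs. I would first try to redo the Pohozaev identity on half-discs directly, using $T_\eps\nOmega=e_\eps\nOmega$. The boundary term $\int_{\partial\Omega\cap B_r}e_\eps\,(x-p)\cdot\nOmega$ is $O(r^2)\sup e_\eps$ by curvature, and it is controlled by the gradient bound. This should give almost-monotonicity and the potential/discrepancy control up to $\partial\Omega$ without requiring convexity.
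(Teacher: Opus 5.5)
Your compactness, reflected clearing-out and branch/junction arguments match the paper's. There is, however, a genuine gap at the step meant to give the upper bound $\zetastar(B_r(p)\cap\overline\Omega)\le Cr$ at boundary points. Boundary absolute continuity, the boundary discrepancy relations and $T=2(\tau\otimes\tau)\zetastar$ on $\partial\Omega$ all depend on that bound.

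You attribute it to an $\eps$-level almost-monotonicity of $r^{-1}\upnu_\eps(B_r(x))$, but no such formula is available for vectorial systems. The Pohozaev identity gives
\[
 \frac{\dd}{\dd r}\,\frac{\upnu_\eps(B_r)}{r}
 =\frac{1}{r}\int_{\partial B_r}\eps\abs{\partial_ru}^2\,\dd\cH^1
 -\frac{1}{r^2}\int_{B_r}\Big(\frac{\eps}{2}\abs{\nabla u}^2-\frac{V(u)}{\eps}\Big)\dd x .
\]
Without Modica's inequality the discrepancy term has no sign. In the limit its density is $e-2\Uptheta=m_{\tau\tau}\geq0$, which is the unfavourable sign.

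The monotone quantity in the planar vectorial theory is $r^{-1}\zetastar(B_r)$, and only after passing to the limit. Its derivative is a slice of $\mathcal N=2\zetastar+\upmu_{\star,rr}-\upmu_{\star,\theta\theta}$, which is unsigned at the $\eps$-level and non-negative only once the discrepancy relations are known. At a boundary centre your scheme is therefore circular: you obtain the boundary discrepancy relations by blowing up to tangent lines, which presupposes the density bounds they are needed for.

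The half-disc fallback does not repair this. The field $x-p$ is not tangent to a curved $\partial\Omega$, so the term $\int_{\partial\Omega\cap B_r}e_\eps\,(x-p)\cdot\nOmega$ involves the trace of $e_\eps$ on $\partial\Omega$, which no bulk bound controls. The gradient bound gives only $O(r^3/\eps)$, useless for $r\gg\sqrt{\eps}$; even in the scalar case this term is signed only for convex domains. The interior term also remains unsigned.

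The missing idea is to determine the boundary part of the limit stress before any density information is available. Test $\int_{\overline\Omega}T:DX=0$ with the collar fields $t\chi(t/\delta)\phi(s)n(s)$ and $t\chi(t/\delta)\phi(s)\tau(s)$. They vanish on $\partial\Omega$, so they are admissible, yet their derivatives on $\partial\Omega$ are $\phi\,n\otimes n$ and $\phi\,\tau\otimes n$. Since $\abs{T}(\{0<t<2\delta\})\to0$, this gives $T_{nn}\llcorner\partial\Omega=T_{\tau n}\llcorner\partial\Omega=0$. Combined with $\tr T=2\zetastar$, it gives $T\llcorner\partial\Omega=2(\tau\otimes\tau)\,\zetastar\llcorner\partial\Omega$, with no atoms.

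Then work in a conformal doubled chart rather than Fermi coordinates, for three reasons:
\begin{itemize}
\item the principal part stays the flat Laplacian;
\item the force $\eps^{-1}V(u)\nabla a$ is dominated by the potential measure;
\item the reflected map solves the equation on the full disc, so the radial Pohozaev identity holds about points of the fixed line.
\end{itemize}
The boundary stress identity makes $\mathcal N$ vanish on the fixed line and at the centre. Hence $\mathcal N\ge0$ and $e^{Br/2}r^{-1}\zetastar^a(\overline B_r)$ is non-decreasing. The clearing-out comparison $\nustar(B_{r/2})\le C\zetastar(B_r)$ transfers the linear bound to $\nustar$. Absolute continuity, discrepancy and free-boundary stationarity on $\partial\Omega$ then follow without any blow-up.

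Two smaller slips:
\begin{itemize}
\item Even reflection in Fermi coordinates gives only Lipschitz coefficients, not $C^2$ ones.
\item $e-m_{\tau\tau}=m_{nn}-m_{\tau\tau}=2\Uptheta$, so $T=2\Uptheta\,\tau\otimes\tau$, not $\Uptheta\,\tau\otimes\tau$.
\end{itemize}
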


\Needspace{7\baselineskip}
\begin{rem}
The conclusions have three consequences that delimit the theorem's scope.
\begin{enumerate}[label=$(\theenumi)$]
\item Theorem~\ref{thm:main} extends the compactness, rectifiability,
stationarity, absolute-continuity, and discrepancy conclusions of
\cite[Theorems 1.2, 1.5, 1.8, and 1.9]{Bethuel2025} to the closure of a
smooth planar domain.  It retains the local-segment conclusion of
\cite[Theorem 1.3]{Bethuel2025} in the interior and replaces its literal
boundary analogue, which is false in the presence of boundary-supported
mass, by constancy on regular boundary arcs and the projected finite-junction
law.  The scalar theorem proves both equipartition and integer multiplicity
\cite[Theorem 3.3]{LiPariseSarnataro2024}; neither is asserted here.

\item Boundary-supported mass cannot be excluded under the hypotheses of
Theorem~\ref{thm:main}.  Indeed, take the quartic potential
$W(s)=\frac{(1-s^2)^2}{4}$ and, for a scalar solution $v_\eps$, set
\begin{equation}\label{eq:scalar-embedding}
 V(y_1,\ldots,y_k)
 :=W(y_1)+\frac{1}{2}\sum_{a=2}^k y_a^2,
 \quad
 u_\eps=(v_\eps,0,\ldots,0).
\end{equation}
Each scalar Neumann solution is therefore a vectorial one.  The
boundary-clustered solutions constructed in
\cite[Theorem 1.1]{MalchiodiNiWei2007} and
\cite[Theorem 1.1]{MalchiodiWei2007} rule out blanket orthogonality.

\item The stationarity conclusion is attached to the potential measure
$\zetastar$, not to the full energy measure $\nustar$.  This distinction is
intrinsic to the vectorial discrepancy relation
\eqref{eq:intro-discrepancy}.
\end{enumerate}
\end{rem}

\subsection{Difficulties and strategies}
\label{subsec:intro-strategy}

The interior compactness and discrepancy theory is supplied by
Bethuel's theorem.  The proof therefore concentrates on the passage from
interior balls to balls centered on $\partial\Omega$.  Two principal
difficulties determine the strategy.
First, conformal flattening followed by even reflection does not preserve the
constant-coefficient equation.  More precisely, for a conformal boundary
chart $\Psi:B_{2\rho}^+\to\Omega$ and
\begin{equation}\label{eq:architecture-reflection}
 v_j:=u_j\circ\Psi,
 \quad
 a(x):=\abs{\Psi'(x)}^2,
\end{equation}
the Neumann condition permits an even extension to $B_{2\rho}$, but the
extended map satisfies
\begin{equation}\label{eq:architecture-reflected-equation}
 -\eps_j\Delta v_j+\frac{a(x)}{\eps_j}\nabla V(v_j)=0,
 \quad
 a\in W^{1,\infty}(B_{2\rho}),
 \quad
 0<a_-\leq a\leq a_+.
\end{equation}
Second, one may have $\zetastar(\partial\Omega)>0$; hence the boundary part
of the limiting stress must be determined before absolute continuity of the
limiting measures is available.  The argument is divided into the following
five modules.

\paragraph{1. Variable-coefficient decay and clearing-out.}
For the energy $\cE_{\eps,a}$ defined in \eqref{eq:var-energy}, the principal
local estimate, after scaling to a sufficiently small disk, is
\begin{equation}\label{eq:architecture-clearing-out}
 \cE_{\eps,a}\left(u;B_{\frac{R}{2}}\right)
 \leq C\left[
 R^{-\frac{1}{2}}\cE_{\eps,a}(u;B_R)^{\frac{3}{2}}
 +\frac{\eps}{R}\cE_{\eps,a}(u;B_R)
 \right].
\end{equation}
To see why this is a perturbation of Bethuel's energy decay, set
\begin{equation}\label{eq:architecture-scaling}
 u_R(y):=u(p+Ry),
 \quad a_R(y):=a(p+Ry),
 \quad \eps_R:=\frac{\eps}{R}.
\end{equation}
Then
\begin{equation}\label{eq:architecture-coefficient-smallness}
 \norm{a_R-a(p)}_{L^\infty(B_1)}
 +\norm{\nabla a_R}_{L^\infty(B_1)}
 \leq 2R\norm{\nabla a}_{L^\infty(B_R(p))}.
\end{equation}
Thus the coefficient oscillation is absorbed at small scales.  The factor
$\frac{\eps}{R}=\eps_R$ in \eqref{eq:architecture-clearing-out} is required by
scaling.  This reduction applies only to the energy-decay and clearing-out
module: for
\begin{equation}\label{eq:architecture-weighted-stress}
 T_{\eps,a}
 :=e_{\eps,a}(u)I_2-\eps(\nabla u)^T\nabla u
\end{equation}
the transformed equation yields the \emph{forced stationary identity}
\begin{equation}\label{eq:architecture-forced-stationary-identity}
 \diver T_{\eps,a}
 =\frac{V(u)}{\eps}\nabla a.
\end{equation}
Hence the boundary stress and forced monotonicity arguments below cannot be
obtained by a direct invocation of Bethuel's constant-coefficient theorem.
Iteration of \eqref{eq:architecture-clearing-out}, together with the
target-space multiplier estimates, yields both clearing-out away from the wells
and comparison of the limiting full and potential energies.  After transfer
back through the boundary charts, there exist $r_c,\eta_c,C_c>0$ such that
\begin{align}
 p\in\Sstar,\quad 0<r<r_c
 &\Rightarrow
 \nustar(B_r(p)\cap\overline\Omega)\geq\eta_cr,
 \label{eq:architecture-lower-full}\\
 \nustar\left(B_{\frac{r}{2}}(p)\cap\overline\Omega\right)
 &\leq C_c\zetastar(B_r(p)\cap\overline\Omega).
 \label{eq:architecture-full-potential-comparison}
\end{align}

\paragraph{2. Boundary stress before absolute continuity.}
The diffuse stress-energy tensor is divergence-free in $\Omega$, and its Neumann
traction is normal to the boundary.  Consequently,
\begin{equation}\label{eq:architecture-inner-variation}
 \int_{\overline\Omega}T:DX=0
 \quad
 \forall X\in C^1(\overline\Omega;\R^2),
 \quad X\cdot\nOmega=0\text{ on }\partial\Omega.
\end{equation}
Testing \eqref{eq:architecture-inner-variation} with vector fields supported
in a shrinking tubular neighborhood of $\partial\Omega$ identifies the
boundary restriction of $T$ as a matrix-valued Radon measure:
\begin{equation}\label{eq:architecture-stress}
 T_{nn}\llcorner\partial\Omega
 =T_{\tau n}\llcorner\partial\Omega=0,
 \quad
 T\llcorner\partial\Omega
 =2(\tau\otimes\tau)
   (\zetastar\llcorner\partial\Omega).
\end{equation}
This step is measure-theoretic and does not assume
$\nustar,\zetastar\ll\cH^1$.

\paragraph{3. Forced radial identity in the doubled chart.}
The coefficient $a$ in \eqref{eq:architecture-reflected-equation} produces a
vector-valued force measure $h$.  For a centre $p$ on the fixed line, define
\begin{equation}\label{eq:architecture-radial-quantities}
 \begin{aligned}
 Z_p(r)&:=\zetastar^a(B_r(p)),
 &F_p(r)&:=\frac{Z_p(r)}{r},\\
 H_p(r)&:=\int_{B_r(p)}(x-p)\cdot\dd h(x),
 &\mathcal N_p&:=2\zetastar^a
 +\upmu_{\star,rr}-\upmu_{\star,\theta\theta},
\end{aligned}
\end{equation}
where the polar components are taken about $p$.  The boundary stress identity
from the preceding module supplies the missing contribution on the fixed
line and yields
\begin{equation}\label{eq:architecture-radial-positivity}
 \mathcal N_p\geq0,
 \quad
 \abs h\leq B\zetastar^a,
 \quad
 B:=\frac{\norm{\nabla a}_{L^\infty}}{a_-}.
\end{equation}
The limiting Pohozaev identity then reads
\begin{equation}\label{eq:architecture-forced-pohozaev}
 F_p(r_1)-F_p(r_0)
 =\int_{B_{r_1}(p)\backslash B_{r_0}(p)}
   \frac{1}{4\abs{x-p}}\,\dd\mathcal N_p(x)
 +\int_{r_0}^{r_1}\frac{H_p(s)}{2s^2}\,\dd s.
\end{equation}
Since
\begin{equation}\label{eq:architecture-force-bound}
 \abs{H_p(s)}
 \leq Bs\zetastar^a(B_s(p))=Bs^2F_p(s),
\end{equation}
equations \eqref{eq:architecture-radial-positivity}--
\eqref{eq:architecture-force-bound} imply, in the sense of Radon measures on
$(0,\rho)$,
\begin{equation}\label{eq:architecture-monotonicity}
 \dd F_p(r)\geq-\frac{B}{2}F_p(r)\,\dd r,
 \quad
 \dd\left(e^{\frac{Br}{2}}F_p(r)\right)\geq0.
\end{equation}
This is the boundary replacement for the exact interior monotonicity formula.

\paragraph{4. Linear density bounds and absolute continuity.}
The Gronwall inequality in \eqref{eq:architecture-monotonicity} yields the
upper bound $\zetastar(B_r(p)\cap\overline\Omega)\leq Cr$.  Combining it with
\eqref{eq:architecture-lower-full},
\eqref{eq:architecture-full-potential-comparison}, and
$0\leq\zetastar\leq\nustar$ implies
\begin{equation}\label{eq:architecture-density}
 p\in\Sstar,\quad 0<r<r_c
 \Rightarrow
 c r\leq\zetastar(B_r(p)\cap\overline\Omega)
 \leq\nustar(B_r(p)\cap\overline\Omega)
 \leq C r.
\end{equation}
The lower bound yields $\cH^1(\Sstar)<\infty$ by a $5r$-covering argument;
the upper bound yields boundary absolute continuity.  Together with the
interior theorem, this proves
\begin{equation}\label{eq:architecture-absolute-continuity}
 \Sstar=\supp\nustar=\supp\zetastar,
 \quad
 \nustar=e\,\cH^1\llcorner\Sstar,
 \quad
 \zetastar=\Uptheta\,\cH^1\llcorner\Sstar.
\end{equation}

\paragraph{5. Identification of the limiting varifold.}
The interior discrepancy identities and
\eqref{eq:architecture-stress} yield, in the approximate tangent-normal frame
$(\tau,n)$ of $\Sstar$,
\begin{equation}\label{eq:architecture-stress-identification}
 eI-m
 =\begin{pmatrix}2\Uptheta&0\\0&0\end{pmatrix}_{(\tau,n)},
 \quad
 T=2(\tau\otimes\tau)\zetastar.
\end{equation}
Substitution into \eqref{eq:architecture-inner-variation} yields
\begin{equation}\label{eq:architecture-stationarity}
\begin{aligned}
 0
 &=\frac{1}{2}\int_{\overline\Omega}T:DX
 =\int_{\Sstar}\diver_{T_x\Sstar}X\,\dd\zetastar,
 \\
 &=\delta\Vbold(\Sstar,\Uptheta)(X)
 \quad(X\cdot\nOmega=0).
\end{aligned}
\end{equation}
Finally, testing \eqref{eq:architecture-stationarity} along an isolated branch
shows that its weight is constant; a cutoff at a finite-type boundary
junction $p$ yields
\begin{equation}\label{eq:architecture-junction-balance}
 P_{T_p\partial\Omega}
 \left(\sum_\ell\Uptheta_\ell\tau_\ell\right)=0.
\end{equation}

\subsection{Organization of this paper}

The remaining sections separate the analytic estimates from their
measure-theoretic consequences.  Section~\ref{sec:preliminaries} establishes
compactness, the global range bound, the interior input, and conformal
doubling.  Section~\ref{sec:perturbative-clearing-out} proves the
variable-coefficient decay and clearing-out estimates.
Section~\ref{sec:boundary-analysis} derives the boundary stress identity,
the forced radial formula, and the two-sided density bounds.  Finally,
Section~\ref{sec:proof-main} combines these inputs to prove
Theorem~\ref{thm:main} and obtains the branch-weight and junction conclusions.

\subsection{Notations and conventions}

The following conventions are used throughout the paper.

Latin indices $i,j\in\{1,2\}$ refer to the domain, Greek indices
$\alpha,\beta\in\{1,\ldots,k\}$ refer to the target, and repeated indices
are summed.  For $A,B\in\R^{2\times2}$ we write
\begin{equation}\label{eq:intro-matrix-contraction}
 A:B:=\operatorname{tr}(A^TB)=\sum_{i,j=1}^2A_{ij}B_{ij}.
\end{equation}
For $p\in\R^2$ and $r>0$, our ball and half-ball notation is
\begin{equation}\label{eq:intro-ball-notation}
 B_r(p):=\{x\in\R^2:\abs{x-p}<r\},
 \quad B_r:=B_r(0),
 \quad B_r^+:=B_r\cap\{x_2>0\},
 \quad \Gamma_r:=B_r\cap\{x_2=0\}.
\end{equation}

\begin{itemize}
\item All balls are Euclidean.  Compact inclusion is written
$U\subset\subset\Omega$.
\item Diffuse-energy limits are Radon measures on $\overline\Omega$;
weak-star convergence is tested against $C(\overline\Omega)$, and measure
restriction is denoted by $\llcorner$.
\item At a rectifiable point, $(\tau,n)$ is an approximate tangent-normal
frame.  On $\partial\Omega$, the same letters denote the boundary tangent
and inward unit normal when no ambiguity is possible.
\item The letter $C$, with subscripts when useful, denotes a positive
constant that may change from line to line; dependencies are recorded when
they affect uniformity.
\end{itemize}

\section{Preliminaries}\label{sec:preliminaries}

This section collects the compactness, regularity, interior, and conformal
tools used in the boundary analysis.

\subsection{Potential geometry and diffuse measures}

Fix constants $\upmu_V,c_V,C_V>0$ supplied by
Assumption~\ref{assum:potential}.  The balls
$B(\upsigma_i,2\upmu_V)$ are pairwise disjoint and
\begin{equation}\label{eq:prelim-well-geometry}
 \begin{aligned}
 c_V\abs{y-\upsigma_i}^2
 &\leq V(y)\leq C_V\abs{y-\upsigma_i}^2,\\
 c_V\abs{y-\upsigma_i}^2
 &\leq\nabla V(y)\cdot(y-\upsigma_i)
 \leq C_V\abs{y-\upsigma_i}^2
\end{aligned}
\end{equation}
whenever $\abs{y-\upsigma_i}\leq2\upmu_V$.  We also choose $\upmu_V$ so
that
\begin{equation}\label{eq:prelim-radial-positivity}
 \nabla V(y)\cdot\frac{y-\upsigma_i}{\abs{y-\upsigma_i}}\geq0
 \quad\text{if }0<\abs{y-\upsigma_i}\leq2\upmu_V.
\end{equation}

Taking the trace in \eqref{eq:intro-stress-energy-tensor} and using
\eqref{eq:intro-measures}, the diffuse stress--energy tensor satisfies
\begin{equation}\label{eq:prelim-stress-trace}
 (\tr T_\eps(u_\eps))\,\dd x=2\upzeta_\eps.
\end{equation}
For the sequence in Theorem~\ref{thm:main}, abbreviate
\[
 \upnu_j:=\upnu_{\eps_j},
 \quad \upzeta_j:=\upzeta_{\eps_j},
 \quad \upmu_{j,ab}:=\upmu_{\eps_j,ab},
 \quad a,b\in\{1,2\}.
\]
We also write
\begin{equation}\label{eq:tensor-measure-matrix-notation}
 \upmu_j:=(\upmu_{j,ab})_{a,b=1}^2,
 \quad
 \upmu_\star:=(\upmu_{\star,ab})_{a,b=1}^2.
\end{equation}

\begin{prop}\label{prop:unconditional}
Under the hypotheses of Theorem~\ref{thm:main}, a subsequence satisfies
the following preliminary conclusions:
\begin{enumerate}[label=$(\theenumi)$]
\item there is a finite-perimeter partition $(E_a)_{a=1}^q$ such that
\[
 u_j\to\sum_{a=1}^q\upsigma_a\mathbf1_{E_a}
 \quad\text{in }L^1(\Omega);
\]
\item the measures $\upnu_j$, $\upzeta_j$, and $\upmu_{j,ab}$ converge weakly
star on $\overline\Omega$ to Radon measures $\nustar$, $\zetastar$, and
finite signed Radon measures $\upmu_{\star,ab}$;
\item on every $U\subset\subset\Omega$, the limits have the interior properties
stated in Theorem~\ref{thm:bethuel-interior};
\item for every $X\in C^1(\overline\Omega;\R^2)$ tangent to
$\partial\Omega$,
\begin{equation}\label{eq:inner-variation}
 \int_{\overline\Omega}\diver X\dd\nustar
 -\int_{\overline\Omega}\partial_iX_j\dd\upmu_{\star,ij}=0.
\end{equation}
\end{enumerate}
\end{prop}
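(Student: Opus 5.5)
The plan is to prove the four items in order. Items (1) and (2) come from energy compactness. Item (3) is an application of Bethuel's interior theorem. Item (4) is the only place where the boundary condition enters, and it follows by passing to the limit in the diffuse stationary identity \eqref{eq:intro-stationary-identity}.

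For (2), the bound $\upnu_j(\overline\Omega)\leq M_0$ and the pointwise estimates $0\leq\upzeta_j\leq\upnu_j$ and $\abs{\upmu_{j,ab}}\leq\eps_j\abs{\nabla u_j}^2\leq2\upnu_j$ (Cauchy--Schwarz) give uniform total-variation bounds on the compact set $\overline\Omega$. Banach--Alaoglu for $C(\overline\Omega)^*$ together with a diagonal extraction then yields weak-star limits along one common subsequence.

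For (1), I will use the Modica--Mortola/Baldo device. Set $d_\alpha(y):=\dist_V(y,\upsigma_\alpha)$, the geodesic distance for the degenerate metric $\sqrt{2V}\abs{dy}$, and $w_{j,\alpha}:=d_\alpha\circ u_j$. Young's inequality gives $\abs{\nabla w_{j,\alpha}}\leq\sqrt{2V(u_j)}\abs{\nabla u_j}\leq e_{\eps_j}(u_j)$, so $\norm{\nabla w_{j,\alpha}}_{L^1}\leq M_0$. Assumption~\ref{assum:potential}(3) yields an $L^\infty$ bound (or at least coercivity of $d_\alpha$, $d_\alpha(y)\gtrsim\abs{y}^2$ for large $\abs y$, which gives equi-integrability). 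Hence $w_{j,\alpha}$ is precompact in $L^1$ by Rellich on the Lipschitz domain $\Omega$. Since $\int V(u_j)\leq\eps_jM_0\to0$, $u_j$ converges in measure toward $\Sigma$ along a subsequence. Combining the convergence of the $w_{j,\alpha}$ with the fact that $(d_\alpha)_\alpha$ separates the wells, we get $u_j\to u_0\in\Sigma$ a.e. Then $\mathbf1_{E_\alpha}=\lim \phi(w_{j,\alpha})$ for a suitable cutoff $\phi$, and lower semicontinuity of total variation gives $\Per(E_\alpha;\Omega)\leq C(V)M_0$. The $L^1$ convergence of $u_j$ itself follows from Vitali, using uniform integrability from $V(y)\gtrsim\abs y^2$ at infinity (or from a maximum-principle range bound $\abs{u_j}\leq R_\infty$ if one is stated, obtained by testing with $(\abs{u}^2-R_\infty^2)_+u$ and using the Neumann condition).

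For (3), fix $U\subset\subset\Omega$. The restrictions $u_j|_U$ are solutions with energy bounded by $M_0$, so Theorem~\ref{thm:bethuel-interior} applies, possibly after a further subsequence. Exhausting $\Omega$ by countably many such $U$ and extracting diagonally, the interior limits coincide with the restrictions of $\nustar,\zetastar,\upmu_{\star,ab}$ to $U$, because weak-star convergence on $\overline\Omega$ implies weak-star convergence on open $U$ for test functions in $C_c(U)$.

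For (4), take $X\in C^1(\overline\Omega;\R^2)$ with $X\cdot\nOmega=0$. I first justify \eqref{eq:intro-stationary-identity} for weak solutions. Elliptic regularity (the range bound, $C^3$ boundary and Neumann data) gives $u_j\in C^2(\overline\Omega)$, so \eqref{eq:intro-stress-divergence} holds pointwise. Integrating $\partial_j(T_\eps)_{ij}X_i=0$ by parts gives $\int T_\eps:DX=\int_{\partial\Omega}X\cdot T_\eps\nOmega$. On $\partial\Omega$ we have $T_\eps\nOmega=e_\eps\nOmega-\eps(\nabla u)^T\partial_{\nOmega}u=e_\eps\nOmega$, which is orthogonal to $X$, so the boundary term vanishes. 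Writing $T_\eps:DX=\diver X\,e_\eps-\partial_iX_j\,\eps\partial_iu\cdot\partial_ju$, each coefficient of $X$ lies in $C(\overline\Omega)$, so weak-star convergence on $\overline\Omega$ passes the identity to the limit and gives \eqref{eq:inner-variation}.

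I expect the main obstacle to be the regularity needed up to the boundary for weak solutions, i.e.\ the global range bound and $C^2(\overline\Omega)$ regularity. I would handle it first by Stampacchia truncation using Assumption~\ref{assum:potential}(3), and then by standard Neumann Schauder theory, which is legitimate for fixed $j$.
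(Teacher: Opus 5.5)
Your proposal is correct and follows essentially the same route as the paper. The paper also uses degenerate-distance functions composed with $u_j$ for the partition and perimeter bound, and the domination $\abs{\upmu_{j,ab}}\leq2\upnu_j$ for weak-star compactness. It likewise uses an exhaustion with a diagonal subsequence for Bethuel's interior theorem. Item (4) is proved the same way, by passing to the limit in the integrated stress identity using the range bound, $C^2$ regularity up to $\partial\Omega$, and the Neumann traction $T_\eps\nOmega=e_\eps\nOmega$.

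The differences are cosmetic. The paper truncates the distance at $d_\Sigma/4$, which makes the $L^1$ bound on $\Phi_\alpha\circ u_j$ automatic; in your untruncated version that bound comes from the range bound, not from a lower bound on $d_\alpha$. The paper also proves the range bound with the bounded test function $g(\abs{u})\frac{u}{\abs{u}}$, $g(t)=\min\{(t-R_V)_+,1\}$, which is admissible. Your test function $(\abs{u}^2-R_\infty^2)_+u$ is not a priori in $H^1\cap L^\infty$, so it must itself be truncated, as you indicate.
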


\begin{proof}
We prove the four assertions in order.

\medskip
\noindent
\emph{Step 1: compactness of the phase maps and construction of the
finite-perimeter partition.}
For $y,z\in\R^k$, define the degenerate distance induced by the potential
$V$ by
\begin{equation}\label{eq:weighted-distance}
 d_V(y,z)
 :=
 \inf_{\substack{
 \gamma\in W^{1,1}([0,1];\R^k)\\
 \gamma(0)=y,\ \gamma(1)=z}}
 \int_0^1
 \sqrt{2V(\gamma(t))}\abs{\gamma'(t)}\,\dd t.
\end{equation}
Since the zero set of $V$ is
\[
 \Sigma=\{\upsigma_1,\ldots,\upsigma_q\}
\]
and the wells are isolated and non-degenerate, one has
\begin{equation}\label{eq:weighted-well-separation}
 d_\Sigma
 :=
 \min_{\alpha\neq\beta}
 d_V(\upsigma_\alpha,\upsigma_\beta)>0.
\end{equation}
For each $\alpha\in\{1,\ldots,q\}$, set
\begin{equation}\label{eq:truncated-distance-functions}
 \Phi_\alpha(y)
 :=
 \min\left\{
 d_V(y,\upsigma_\alpha),\frac{d_\Sigma}{4}
 \right\}.
\end{equation}
The function $\Phi_\alpha$ has the upper-gradient bound
\begin{equation}\label{eq:weighted-distance-gradient}
 \abs{D\Phi_\alpha(y)}
 \leq\sqrt{2V(y)}
 \quad\text{for a.e. }y\in\R^k.
\end{equation}
Consequently, the Sobolev chain rule yields
\begin{equation}\label{eq:weighted-distance-BV}
\begin{aligned}
 \abs{\nabla(\Phi_\alpha\circ u_j)}
 &\leq
 \sqrt{2V(u_j)}\abs{\nabla u_j}
 \\
 &\leq
 \frac{\eps_j}{2}\abs{\nabla u_j}^2
 +\frac{1}{\eps_j}V(u_j)
 =e_{\eps_j}(u_j)
 \quad\text{a.e. in }\Omega.
\end{aligned}
\end{equation}
Here we used
\[
 AB\leq\frac{1}{2}A^2+\frac{1}{2}B^2
\]
with
\[
 A=\sqrt{\eps_j}\abs{\nabla u_j},
 \quad
 B=\sqrt{\frac{2V(u_j)}{\eps_j}}.
\]
Since
\[
 0\leq\Phi_\alpha\leq\frac{d_\Sigma}{4},
\]
the energy bound
\[
 \cE_{\eps_j}(u_j;\Omega)\leq M_0
\]
implies
\begin{equation}\label{eq:BV-uniform-bound}
 \sum_{\alpha=1}^q
 \norm{\Phi_\alpha\circ u_j}_{BV(\Omega)}
 \leq
 \frac{q\,d_\Sigma}{4}\abs{\Omega}+qM_0.
\end{equation}
By compactness in $BV(\Omega)$, after passing to a common subsequence there
exist $\phi_\alpha\in BV(\Omega)$ such that
\begin{equation}\label{eq:phase-functions-BV-convergence}
 \Phi_\alpha\circ u_j\to\phi_\alpha
 \quad\text{strongly in }L^1(\Omega)
 \quad\text{and a.e. in }\Omega
\end{equation}
for every $\alpha$.

On the other hand,
\begin{equation}\label{eq:potential-goes-to-zero}
 \int_\Omega V(u_j)\,\dd x
 \leq
 \eps_j\cE_{\eps_j}(u_j;\Omega)
 \leq\eps_jM_0
 \to0.
\end{equation}
For every $\delta>0$, the assumptions on $V$ imply
\[
 c_\delta
 :=
 \inf\left\{
 V(y):\dist(y,\Sigma)\geq\delta
 \right\}>0.
\]
Therefore
\begin{equation}\label{eq:distance-to-wells-in-measure}
 \begin{aligned}
 \mathcal L^2\left(
 \{x\in\Omega:\dist(u_j(x),\Sigma)\geq\delta\}
 \right)
 &\leq
 \frac{1}{c_\delta}\int_\Omega V(u_j)\,\dd x\\
 &\to0.
\end{aligned}
\end{equation}
After passing to a further subsequence,
\begin{equation}\label{eq:distance-to-wells-ae}
 \dist(u_j(x),\Sigma)\to0
 \quad\text{for a.e. }x\in\Omega.
\end{equation}

The vectors
\[
 \left(\Phi_1(\upsigma_\alpha),\ldots,
       \Phi_q(\upsigma_\alpha)\right),
 \quad \alpha=1,\ldots,q,
\]
are pairwise distinct, because
\begin{equation}\label{eq:well-signatures}
 \Phi_\alpha(\upsigma_\alpha)=0,
 \quad
 \Phi_\beta(\upsigma_\alpha)=\frac{d_\Sigma}{4}
 \quad\text{if }\beta\neq\alpha.
\end{equation}
It follows from
\eqref{eq:phase-functions-BV-convergence} and
\eqref{eq:distance-to-wells-ae} that, for a.e. $x\in\Omega$, there is a
unique index $\alpha(x)\in\{1,\ldots,q\}$ such that
\[
 u_j(x)\to\upsigma_{\alpha(x)}.
\]
Define
\begin{equation}\label{eq:phase-sets-definition}
 E_\alpha
 :=
 \{x\in\Omega:\alpha(x)=\alpha\},
 \quad
 u_0
 :=
 \sum_{\alpha=1}^q
 \upsigma_\alpha\mathbf1_{E_\alpha}.
\end{equation}
Then $(E_\alpha)_{\alpha=1}^q$ is a measurable partition of $\Omega$ up to
a set of $\mathcal L^2$-measure zero, and
\begin{equation}\label{eq:phase-partition-convergence-ae}
 u_j\to u_0
 \quad\text{a.e. in }\Omega.
\end{equation}

Moreover, \eqref{eq:well-signatures} yields
\begin{equation}\label{eq:limit-distance-functions}
 \phi_\alpha
 =
 \Phi_\alpha(u_0)
 =
 \frac{d_\Sigma}{4}
 \left(1-\mathbf1_{E_\alpha}\right)
 \quad\text{a.e. in }\Omega.
\end{equation}
Hence $\mathbf1_{E_\alpha}\in BV(\Omega)$ and
\begin{equation}\label{eq:partition-perimeter-estimate}
\begin{aligned}
 \frac{d_\Sigma}{4}
 \sum_{\alpha=1}^q\Per(E_\alpha;\Omega)
 &=
 \sum_{\alpha=1}^q
 \abs{D\phi_\alpha}(\Omega)
 \\
 &\leq
 \liminf_{j\to\infty}
 \sum_{\alpha=1}^q
 \int_\Omega
 \abs{\nabla(\Phi_\alpha\circ u_j)}\,\dd x
 \\
 &\leq qM_0.
\end{aligned}
\end{equation}
Thus
\begin{equation}\label{eq:partition-perimeter-final}
 \sum_{\alpha=1}^q\Per(E_\alpha;\Omega)
 \leq\frac{4q}{d_\Sigma}M_0.
\end{equation}

Finally, Assumption~\ref{assum:potential}(iii) provides constants
$c_V^\infty>0$ and $C_V^\infty<+\infty$ such that
\begin{equation}\label{eq:potential-coercive-global}
 V(y)\geq c_V^\infty\abs y^2-C_V^\infty
 \quad\forall y\in\R^k.
\end{equation}
Consequently,
\begin{equation}\label{eq:L2-uniform-bound}
\begin{aligned}
 c_V^\infty\int_\Omega\abs{u_j}^2\,\dd x
 &\leq
 \int_\Omega V(u_j)\,\dd x
 +C_V^\infty\abs{\Omega}
 \\
 &\leq
 \eps_jM_0+C_V^\infty\abs{\Omega},
\end{aligned}
\end{equation}
so $(u_j)$ is uniformly integrable in $L^1(\Omega)$.  Combining this with
\eqref{eq:phase-partition-convergence-ae} and Vitali's theorem yields
\begin{equation}\label{eq:phase-partition-L1-convergence}
 u_j\to
 \sum_{\alpha=1}^q
 \upsigma_\alpha\mathbf1_{E_\alpha}
 \quad\text{strongly in }L^1(\Omega).
\end{equation}
This proves assertion $(1)$.

\medskip
\noindent
\emph{Step 2: weak-star compactness of the energy measures.}
Recall that
\begin{align}
 \upnu_j
 &:=
 \left(
 \frac{\eps_j}{2}\abs{\nabla u_j}^2
 +\frac{1}{\eps_j}V(u_j)
 \right)\cL^2\llcorner\Omega,
 \label{eq:full-energy-measure-proof}\\
 \upzeta_j
 &:=
 \frac{1}{\eps_j}V(u_j)\,
 \cL^2\llcorner\Omega,
 \label{eq:potential-energy-measure-proof}\\
 \upmu_{j,i\ell}
 &:=
 \eps_j\,
 \partial_i u_j\cdot\partial_\ell u_j\,
 \cL^2\llcorner\Omega,
 \quad i,\ell\in\{1,2\}.
 \label{eq:gradient-tensor-measure-proof}
\end{align}
We regard these as measures on $\overline\Omega$.  The energy bound implies
\begin{equation}\label{eq:measure-mass-bounds}
 \upnu_j(\overline\Omega)\leq M_0,
 \quad
 0\leq\upzeta_j\leq\upnu_j.
\end{equation}
Furthermore,
\begin{equation}\label{eq:gradient-measure-bound}
\begin{aligned}
 \abs{\upmu_{j,i\ell}}
 &\leq
 \frac{\eps_j}{2}
 \left(
 \abs{\partial_i u_j}^2+
 \abs{\partial_\ell u_j}^2
 \right)\cL^2
 \\
 &\leq
 \eps_j\abs{\nabla u_j}^2\cL^2
 \leq2\upnu_j.
\end{aligned}
\end{equation}
The sequential weak-star compactness of bounded subsets of
$\cM(\overline\Omega)$ therefore yields, after passing to a subsequence,
\begin{equation}\label{eq:measure-weak-star-convergence}
 \begin{aligned}
  \upnu_j&\stackrel{*}{\rightharpoonup}\nustar,
  &\upzeta_j&\stackrel{*}{\rightharpoonup}\zetastar,\\
  \upmu_{j,i\ell}&\stackrel{*}{\rightharpoonup}\upmu_{\star,i\ell}
  &&\text{in }\cM(\overline\Omega).
\end{aligned}
\end{equation}
Here $\nustar$ and $\zetastar$ are non-negative Radon measures, while each
$\upmu_{\star,i\ell}$ is a finite signed Radon measure.  This proves
assertion $(2)$.

\medskip
\noindent
\emph{Step 3: interior conclusions.}
Choose an exhaustion of $\Omega$ by bounded smooth open sets
\begin{equation}\label{eq:interior-exhaustion}
 U_1\subset\subset U_2\subset\subset\cdots\subset\subset\Omega,
 \quad
 \bigcup_{m=1}^\infty U_m=\Omega.
\end{equation}
For every fixed $m$, the restrictions $u_j|_{U_m}$ satisfy the hypotheses
of Theorem~\ref{thm:bethuel-interior}.  Applying that theorem successively
on $U_m$ and using a diagonal subsequence, we obtain all the conclusions of
Theorem~\ref{thm:bethuel-interior} simultaneously on every
$U\subset\subset\Omega$.  In particular, the interior rectifiability,
density, discrepancy, and stress--energy identities hold locally in
$\Omega$.  This proves assertion $(3)$.

\medskip
\noindent
\emph{Step 4: passage of the stationary identity to the limit.}
Lemma~\ref{lem:global-neumann-Linf} and the boundary regularity statement
following it apply independently to each $u_j$ and justify the classical
identities used in this step.
Define the diffuse stress--energy tensor
\begin{equation}\label{eq:diffuse-stress-proof}
 T_{\eps_j}(u_j)
 :=
 e_{\eps_j}(u_j)I_2
 -\eps_j(\nabla u_j)^T\nabla u_j.
\end{equation}
In components,
\[
 (T_{\eps_j}(u_j))_{i\ell}
 =
 e_{\eps_j}(u_j)\delta_{i\ell}
 -\eps_j\partial_i u_j\cdot\partial_\ell u_j.
\]
A direct calculation using the Euler--Lagrange equation shows
\begin{equation}\label{eq:diffuse-stress-divergence}
\begin{aligned}
 \partial_\ell(T_{\eps_j}(u_j))_{i\ell}
 &=
 \left(
 -\eps_j\Delta u_j
 +\frac{1}{\eps_j}\nabla V(u_j)
 \right)\cdot\partial_i u_j
 =0
 \quad\text{in }\Omega.
\end{aligned}
\end{equation}
Thus $\diver T_{\eps_j}(u_j)=0$ in the distributional sense.  On
$\partial\Omega$, the Neumann condition implies
\begin{equation}\label{eq:diffuse-stress-traction}
\begin{aligned}
 (T_{\eps_j}(u_j)\nOmega)_i
 &=
 e_{\eps_j}(u_j)(\nOmega)_i
 -\eps_j\partial_i u_j\cdot
   \partial_{\nOmega}u_j
 \\
 &=
 e_{\eps_j}(u_j)(\nOmega)_i.
\end{aligned}
\end{equation}
Let $X\in C^1(\overline\Omega;\R^2)$ satisfy
\[
 X\cdot\nOmega=0
 \quad\text{on }\partial\Omega.
\]
Then \eqref{eq:diffuse-stress-traction} implies
\[
 (T_{\eps_j}(u_j)\nOmega)\cdot X=0
 \quad\text{on }\partial\Omega.
\]
Hence the divergence theorem and
\eqref{eq:diffuse-stress-divergence} yield
\begin{equation}\label{eq:diffuse-inner-variation}
\begin{aligned}
 0
 &=
 \int_{\partial\Omega}
 (T_{\eps_j}(u_j)\nOmega)\cdot X\,\dd\cH^1
 \\
 &=
 \int_\Omega
 \diver\!\left(T_{\eps_j}(u_j)X\right)\,\dd x
 \\
 &=
 \int_\Omega
 T_{\eps_j}(u_j):DX\,\dd x
 \\
 &=
 \int_{\overline\Omega}
 \diver X\,\dd\upnu_j
 -
 \int_{\overline\Omega}
 \partial_iX_\ell\,\dd\upmu_{j,i\ell}.
\end{aligned}
\end{equation}
Since $\diver X$ and $\partial_iX_\ell$ are continuous on
$\overline\Omega$, the convergences
\eqref{eq:measure-weak-star-convergence} allow us to pass to the limit:
\begin{equation}\label{eq:limiting-inner-variation-proof}
 \int_{\overline\Omega}\diver X\,\dd\nustar
 -
 \int_{\overline\Omega}
 \partial_iX_\ell\,\dd\upmu_{\star,i\ell}
 =0.
\end{equation}
This is precisely \eqref{eq:inner-variation} and proves assertion $(4)$.
\end{proof}

\subsection{A global range bound}

The coercivity of the potential yields a uniform range bound for every
Neumann solution.

\begin{lem}\label{lem:global-neumann-Linf}
There is $L_V<\infty$, depending only on $V$, such that every solution of
\eqref{eq:intro-system} satisfies
$\norm{u_\eps}_{L^\infty(\Omega)}\leq L_V$.
\end{lem}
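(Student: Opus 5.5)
The plan is a maximum principle for $w:=\abs{u_\eps}^2$ combined with the coercivity condition in Assumption~\ref{assum:potential}(iii). For a classical solution we have
\begin{equation*}
 \Delta w=2\abs{\nabla u_\eps}^2+2u_\eps\cdot\Delta u_\eps
 =2\abs{\nabla u_\eps}^2+\frac{2}{\eps^2}\,u_\eps\cdot\nabla V(u_\eps).
\end{equation*}
On the set $\{\abs{u_\eps}\geq R_\infty\}$ the last term is at least $2\alpha_\infty\eps^{-2}\abs{u_\eps}^2>0$, so $w$ is strictly subharmonic there. The claim is then $\norm{u_\eps}_{L^\infty}\leq L_V:=R_\infty$, which is independent of $\eps$ and of $\Omega$. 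On the boundary we have $\partial_{\nOmega}w=2u_\eps\cdot\partial_{\nOmega}u_\eps=0$.

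\textbf{Classical case.} Suppose $u_\eps\in C^2(\overline\Omega)$. Let $x_0$ be a maximum point of $w$ on $\overline\Omega$, and suppose, to reach a contradiction, that $w(x_0)>R_\infty^2$.
\begin{itemize}
\item If $x_0\in\Omega$, then $\Delta w(x_0)\leq0$. This contradicts strict positivity of $\Delta w$ near $x_0$.
\item If $x_0\in\partial\Omega$, then $w$ is strictly subharmonic on a neighbourhood $\overline\Omega\cap B_\delta(x_0)$ and attains its maximum at $x_0$. Since $w$ is non-constant there (because $\Delta w>0$), Hopf's lemma applies; $\Omega$ is $C^3$, so the interior ball condition holds. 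It gives $\partial_{\nOmega}w(x_0)>0$, contradicting the Neumann condition.
\end{itemize}
Hence $\max w\leq R_\infty^2$.

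\textbf{Weak solutions.} I expect the main obstacle to be the regularity needed to make this rigorous for weak solutions that are only in $H^1$ with $\nabla V(u_\eps)\in L^1$. I would avoid first proving regularity and instead run a weak Stampacchia-type truncation. Take the test function
\begin{equation*}
 \varphi:=u_\eps\,\bigl(\min\{\abs{u_\eps}^2,N\}-R_\infty^2\bigr)_+ ,
\end{equation*}
which lies in $H^1\cap L^\infty$ because of the truncation at level $N$. Plug it into \eqref{eq:intro-weak-system}, writing $g:=(\min\{w,N\}-R_\infty^2)_+$.
\begin{itemize}
\item The gradient term splits as
\begin{equation*}
 \eps\int_\Omega\Bigl[\abs{\nabla u_\eps}^2 g+\tfrac12\nabla w\cdot\nabla g\Bigr]\dd x .
\end{equation*}
Both pieces are nonnegative, since $\nabla w\cdot\nabla g=\abs{\nabla g}^2$ where $g$ is non-constant and vanishes elsewhere.
\item The potential term is $\eps^{-1}\int_\Omega u_\eps\cdot\nabla V(u_\eps)\,g\,\dd x$. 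It is $\geq\eps^{-1}\alpha_\infty R_\infty^2\int_\Omega g\,\dd x\geq0$, because $g>0$ only where $\abs{u_\eps}>R_\infty$.
\end{itemize}
The potential term is integrable because $u_\eps g$ is bounded on the relevant set by the truncation, and $\nabla V(u_\eps)\in L^1$.

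The weak equation says these nonnegative terms sum to zero, so each vanishes. In particular $\int_\Omega g=0$, hence $g=0$ a.e. Letting $N\to\infty$ gives $\abs{u_\eps}\leq R_\infty$ a.e.

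One subtlety to check is that $\abs{u_\eps}g$ being unbounded before truncation is avoided by choosing $N$ finite; $\varphi$ is then bounded by $\sqrt{N}\,N$ on $\{\abs{u_\eps}^2\leq N\}$. On $\{\abs{u_\eps}^2>N\}$ the factor $g$ is constant, so $\varphi=u_\eps(N-R_\infty^2)$ is not bounded there. I would fix this by replacing $u_\eps$ in $\varphi$ by its radial truncation $u_\eps\min\{1,\sqrt N/\abs{u_\eps}\}$. The same sign computation goes through, using $\frac{y}{\abs y}\cdot\nabla V(y)\geq\alpha_\infty\abs y>0$ for $\abs y\geq R_\infty$.

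With the bound in hand, standard elliptic regularity for the Neumann problem with bounded right-hand side yields the classical boundary regularity used later.
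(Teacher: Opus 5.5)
Your final argument (after the radial-truncation fix) tests the weak equation with the bounded Lipschitz map $\psi(\abs{u_\eps})\frac{u_\eps}{\abs{u_\eps}}$, where $\psi\geq0$ is non-decreasing and vanishes on $[0,R_\infty]$; this is essentially the paper's proof, which uses $F(y)=\min\{(\abs y-R_V)_+,1\}\frac{y}{\abs y}$ and the same two sign checks, namely gradient term $\psi'\abs{\nabla\abs{u_\eps}}^2+\frac{\psi}{\abs{u_\eps}}\bigl(\abs{\nabla u_\eps}^2-\abs{\nabla\abs{u_\eps}}^2\bigr)\geq0$ and potential term at least $\alpha_\infty\psi(\abs{u_\eps})\abs{u_\eps}$. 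Two remarks: on $\{\abs{u_\eps}>\sqrt N\}$ the gradient sign comes from this identity, not from your earlier $\abs{\nabla g}^2$ computation; and neither the classical Hopf argument nor the limit $N\to\infty$ is needed, since any fixed $N>R_\infty^2$ already forces $\abs{u_\eps}\leq R_\infty$ almost everywhere.
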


\begin{proof}
Set $R_V:=\max\{R_\infty,1\}$ and define the non-decreasing Lipschitz
function
\[
 g(t):=\min\{(t-R_V)_+,1\}.
\]
The bounded map $F:\R^k\to\R^k$ given by
\[
 F(y):=
 \begin{cases}
 g(\abs y)\frac{y}{\abs y},&y\neq0,\\
 0,&y=0,
 \end{cases}
\]
is globally Lipschitz because it vanishes on $B_{R_V}$.  Hence
$\varphi_\eps:=F(u_\eps)$ belongs to
$H^1(\Omega;\R^k)\cap L^\infty(\Omega;\R^k)$ and is admissible in
\eqref{eq:intro-weak-system}.  At almost every point with
$\abs{u_\eps}>R_V$,
\begin{equation}\label{eq:range-radial-coercivity}
 \begin{aligned}
 \partial_i u_\eps\cdot\partial_iF(u_\eps)
 &=
 g'(\abs{u_\eps})
 \abs{\partial_i\abs{u_\eps}}^2\\
 &\quad+
 \frac{g(\abs{u_\eps})}{\abs{u_\eps}}
 \left(
 \abs{\partial_i u_\eps}^2
 -\abs{\partial_i\abs{u_\eps}}^2
 \right)
 \geq0.
 \end{aligned}
\end{equation}
Assumption~\ref{assum:potential}(iii) also yields
\[
 \nabla V(u_\eps)\cdot F(u_\eps)
 =
 \frac{g(\abs{u_\eps})}{\abs{u_\eps}}
 u_\eps\cdot\nabla V(u_\eps)
 \geq
 \alpha_\infty g(\abs{u_\eps})\abs{u_\eps}
\]
on the same set.  Testing with $\varphi_\eps$ therefore yields
\begin{equation}\label{eq:range-truncation-test}
 0
 \geq
 \frac{\alpha_\infty}{\eps}
 \int_{\{\abs{u_\eps}>R_V\}}
 g(\abs{u_\eps})\abs{u_\eps}\,\dd x
 \geq0.
\end{equation}
The middle integral can vanish only if
$\abs{u_\eps}\leq R_V$ almost everywhere.  This proves the claim with
$L_V:=R_V$ without assuming prior regularity of $u_\eps$.
\end{proof}

The range bound makes $\nabla V(u_\eps)$ bounded.  Boundary $W^{2,p}$
regularity for the Neumann problem, followed by Schauder estimates, implies
$u_\eps\in C^{2,\alpha}(\overline\Omega;\R^k)$ for every $0<\alpha<1$, which
justifies the classical differential identities used below.

\subsection{The interior vectorial theorem}

We record the part of Bethuel's theorem used below.

\begin{thm}[Bethuel's interior theorem]
\label{thm:bethuel-interior}
Let $D\subset\subset\R^2$ be a bounded smooth domain, and let $v_j$ solve
\begin{equation}\label{eq:bethuel-interior-equation}
 -\eps_j\Delta v_j+\frac{1}{\eps_j}\nabla V(v_j)=0
 \quad\text{in }D,
\end{equation}
where $\eps_j\to0$ and
\[
 \cE_{\eps_j}(v_j;D)\leq M.
\]
After passing to a subsequence, there exist a closed countably
$1$-rectifiable set $S_D\subset D$ and Radon measures
$\upnu_D$, $\upzeta_D$, and $\upmu_{D,i\ell}$,
$i,\ell\in\{1,2\}$, such that
\begin{equation}\label{eq:bethuel-measure-convergence}
 \begin{aligned}
 \left[\frac{\eps_j}{2}\abs{\nabla v_j}^2
       +\frac{1}{\eps_j}V(v_j)\right]\dd x
 &\stackrel{*}{\rightharpoonup}\upnu_D,\\
 \frac{1}{\eps_j}V(v_j)\dd x
 &\stackrel{*}{\rightharpoonup}\upzeta_D,\\
 \eps_j\,\partial_i v_j\cdot\partial_\ell v_j\dd x
 &\stackrel{*}{\rightharpoonup}\upmu_{D,i\ell}
\end{aligned}
\end{equation}
as Radon measures locally in $D$, and the following properties hold.

\begin{enumerate}[label=$(\theenumi)$]
\item There exists a locally constant map
\[
 v_\star:D\backslash S_D\to\Sigma
\]
such that
\[
 v_j\to v_\star
 \quad\text{locally uniformly in }D\backslash S_D.
\]

\item The limiting measures are concentrated on $S_D$ and admit the
representations
\begin{equation}\label{eq:bethuel-density-representations}
 \upnu_D=e_D\,\cH^1\llcorner S_D,
 \quad
 \upzeta_D=\Uptheta_D\,\cH^1\llcorner S_D,
 \quad
 \upmu_{D,i\ell}
 =m_{D,i\ell}\,\cH^1\llcorner S_D.
\end{equation}
For every $K\subset\subset D$, there exist constants
$0<c_K\leq C_K<+\infty$ such that
\begin{equation}\label{eq:bethuel-density-bounds}
 c_K\leq e_D,\Uptheta_D\leq C_K,
 \quad
 \abs{m_{D,i\ell}}\leq C_K
\end{equation}
for $\cH^1$-almost every point of $S_D\cap K$.

\item At $\cH^1$-almost every $x\in S_D$, let $\tau(x)$ be a unit
vector spanning the approximate tangent line $T_xS_D$, and set
$n(x):=\tau(x)^\perp$.  In the frame $(\tau,n)$, the limiting densities
satisfy the vectorial discrepancy relations
\begin{equation}\label{eq:bethuel-discrepancy-relations}
 e_D=m_{D,nn},
 \quad
 2\Uptheta_D=m_{D,nn}-m_{D,\tau\tau},
 \quad
 m_{D,n\tau}=0.
\end{equation}
Equivalently, the limiting stress--energy tensor
\begin{equation}\label{eq:bethuel-limiting-stress}
 T_D:=\upnu_D I_2-\upmu_D
\end{equation}
has the representation
\begin{equation}\label{eq:bethuel-stress-representation}
 T_D=2(\tau\otimes\tau)\upzeta_D.
\end{equation}

\item The rectifiable varifold determined by the potential-energy measure,
\[
 \mathbf V_D:=\Vbold(S_D,\Uptheta_D),
 \quad
 \norm{\mathbf V_D}=\upzeta_D,
\]
is stationary in $D$:
\begin{equation}\label{eq:bethuel-varifold-stationarity}
 \delta\mathbf V_D(X)
 =
 \int_{S_D}
 (\tau\otimes\tau):DX\,
 \Uptheta_D\,\dd\cH^1
 =0
 \quad
 \forall X\in C_c^1(D;\R^2).
\end{equation}

\item There exists a set $N_D\subset S_D$ with
$\cH^1(N_D)=0$ such that, for every $x\in S_D\backslash N_D$, there are
$r_x>0$, a line $L_x$ through $x$, and a constant $\Uptheta_x>0$ for
which
\begin{equation}\label{eq:bethuel-local-line-structure}
 \cH^1\left(
 (S_D\mathbin{\triangle}L_x)\cap B_{r_x}(x)
 \right)=0,
 \quad
 \upzeta_D\llcorner B_{r_x}(x)
 =
 \Uptheta_x\,
 \cH^1\llcorner
 \left(L_x\cap B_{r_x}(x)\right).
\end{equation}
\end{enumerate}
\end{thm}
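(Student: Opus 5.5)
The statement above is Bethuel's interior theorem recorded from \cite{Bethuel2025}, so the plan is not to re-prove it. Instead, I would check that its hypotheses match ours and that each listed conclusion is either stated there or follows from it by a short algebraic translation.

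\emph{Hypotheses and compactness.} First I would verify the hypotheses. Assumption~\ref{assum:potential} gives finitely many non-degenerate wells and coercivity at infinity, which is the class of potentials treated in \cite{Bethuel2025}. The energy bound $\cE_{\eps_j}(v_j;D)\leq M$ is exactly the bounded-energy hypothesis. If the cited theorem needs a uniform $L^\infty$ bound on $v_j$, it is available whenever the theorem is applied in this paper: there $v_j$ is a restriction of a Neumann solution, and Lemma~\ref{lem:global-neumann-Linf} applies. Interior elliptic regularity then gives smoothness. Weak-star compactness of the three families of measures in \eqref{eq:bethuel-measure-convergence} follows from the mass bounds $\upzeta\leq\upnu$ and $\abs{\upmu_{i\ell}}\leq2\upnu$, as in Step~2 of Proposition~\ref{prop:unconditional}. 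A diagonal argument over an exhaustion of $D$ by compact sets produces one subsequence that converges locally in $D$.

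\emph{Reading off the conclusions.} Items (1), (2) and (5) are then quoted directly. Locally uniform convergence off $S_D$ to a locally constant $\Sigma$-valued map, closedness and countable $1$-rectifiability of $S_D$, and absolute continuity with two-sided density bounds on each $K\subset\subset D$ all come from \cite[Theorems 1.2, 1.5, 1.8]{Bethuel2025}. The local line structure \eqref{eq:bethuel-local-line-structure} is \cite[Theorem 1.3]{Bethuel2025}. The constants $c_K,C_K$ depend on $M$ and on $\dist(K,\partial D)$. For item (3), the relations \eqref{eq:bethuel-discrepancy-relations} are \cite[Theorem 1.9]{Bethuel2025}. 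The tensor form \eqref{eq:bethuel-stress-representation} is pure algebra: in the frame $(\tau,n)$, $T_D$ has density $e_DI-m_D$, and
\[
 e_D-m_{D,nn}=0,\quad -m_{D,n\tau}=0,\quad e_D-m_{D,\tau\tau}=2\Uptheta_D .
\]
Hence $T_D=2(\tau\otimes\tau)\Uptheta_D\,\cH^1\llcorner S_D$. Conversely, taking the trace of this identity and using $\tr T_D=2\upzeta_D$ (the limit of \eqref{eq:prelim-stress-trace}) recovers the relations.

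\emph{Stationarity and the main obstacle.} For item (4), I would test $\diver T_{\eps_j}(v_j)=0$ (see \eqref{eq:intro-stress-divergence}) against $X\in C_c^1(D;\R^2)$. This gives $\int T_{\eps_j}(v_j):DX\,\dd x=0$, and passing to the limit gives $\int T_D:DX=0$. Substituting $T_D=2(\tau\otimes\tau)\upzeta_D$ yields $2\,\delta\mathbf V_D(X)=0$. The main obstacle I expect is bookkeeping rather than analysis. The normalisations of the energy density, of the measures $\upmu$, and of the sign and ordering conventions for the frame $(\tau,n)$ must match those in \cite{Bethuel2025}. Otherwise factors of $2$ in \eqref{eq:bethuel-discrepancy-relations} could shift. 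I would check these conventions first against the one-dimensional heteroclinic profile, for which $m_{\tau\tau}=0$ and $e=m_{nn}=2\Uptheta$.
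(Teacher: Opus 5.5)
Your proposal is correct and, like the paper, reduces the statement to citations of \cite{Bethuel2025} plus linear algebra. The differences are local.

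For item~(4), the paper simply cites \cite[Theorem~1.5]{Bethuel2025}. You instead derive stationarity by letting $j\to\infty$ in $\int_D T_{\eps_j}(v_j):DX\,\dd x=0$, $X\in C_c^1(D;\R^2)$, and substituting $T_D=2(\tau\otimes\tau)\upzeta_D$. This is valid: local weak-star convergence suffices because $DX\in C_c(D)$. It is also the same mechanism the paper later uses in \eqref{eq:stationarity-proof} for the free-boundary case. Your route makes (4) a corollary of (2)--(3) and shows why the varifold weight is $\upzeta_D$ rather than $\upnu_D$; the citation is shorter and does not depend on (3).

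For item~(3), the paper takes from \cite[Theorem~1.9]{Bethuel2025} only $2\Uptheta_D=m_{D,nn}-m_{D,\tau\tau}$ and $m_{D,n\tau}=0$. It then obtains $e_D=m_{D,nn}$ from the limiting trace identity $e_D=\frac{1}{2}\tr m_D+\Uptheta_D$, which is the limit of \eqref{eq:prelim-stress-trace}. You attribute all three relations to that theorem and invoke $\tr T_D=2\upzeta_D$ only for the converse. Since you already have the trace identity, use it in the forward direction, as the paper does, rather than relying on Theorem~1.9 for the third relation. The converse itself comes from reading off the $(\tau,n)$ components of $T_D$. Taking the trace of $2(\tau\otimes\tau)\upzeta_D$ only returns $\tr T_D=2\upzeta_D$.

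In item~(2), the paper obtains the representation $\upmu_{D,i\ell}=m_{D,i\ell}\,\cH^1\llcorner S_D$ and the bound on $m_{D,i\ell}$ from the domination $\abs{\upmu_{j,i\ell}}\leq2\upnu_j$, not directly from \cite[Theorem~1.8]{Bethuel2025}. You already have this domination from your compactness step, so invoke it there explicitly.
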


\begin{proof}
Assertion~$(1)$ and the rectifiability of $S_D$ are
\cite[Theorem~1.2]{Bethuel2025}.  The representations and bounds in
assertion~$(2)$ follow from \cite[Theorem~1.8]{Bethuel2025}, together with
the domination of the gradient measures by the full energy measure.
The two identities
\[
 2\Uptheta_D=m_{D,nn}-m_{D,\tau\tau},
 \quad m_{D,n\tau}=0
\]
are \cite[Theorem~1.9]{Bethuel2025}; combining them with
$e_D=\frac{1}{2}\tr m_D+\Uptheta_D$ yields $e_D=m_{D,nn}$ and hence
\eqref{eq:bethuel-stress-representation}.  Assertions~$(4)$ and~$(5)$ are
\cite[Theorems~1.5 and~1.3]{Bethuel2025}, respectively.
\end{proof}

\subsection{Conformal charts and doubled measures}

Fix $p\in\partial\Omega$.  Choose a $C^3$ Jordan subdomain
$\mathcal O_p\subset\Omega$ whose boundary agrees with
$\partial\Omega$ near $p$.  Since a $C^3$ curve is of class
$C^{2,\alpha}$ for every $0<\alpha<1$, the Kellogg--Warschawski theorem
\cite{Warschawski1935}\cite[Theorems 3.5 and 3.6]{Pommerenke1992}, applied
to $\mathcal O_p$, provides a
conformal boundary chart
\[
 \Psi:B_{2\rho}^+\to\mathcal O_p
\]
whose flat edge maps to $\partial\Omega$ near $p$.  The map extends as a
$C^2$ map with non-vanishing derivative to that edge.
After shrinking the chart, $\Psi$ is uniformly bi-Lipschitz.  Conformality
means that, for a rotation $Q(x)\in SO(2)$,
\begin{equation}\label{eq:conformal-differential}
 D\Psi(x)=\sqrt{a(x)}Q(x),
 \quad
 a(x):=\abs{\Psi'(x)}^2=\det D\Psi(x)>0.
\end{equation}
If $v:=u\circ\Psi$, then
\begin{align}
 \Delta v(x)&=a(x)(\Delta u)(\Psi(x)),
 \label{eq:conformal-laplacian}\\
 \int_{\Psi(U)}\abs{\nabla u}^2\,\dd y
 &=\int_U\abs{\nabla v}^2\,\dd x,
 \quad
 \int_{\Psi(U)}V(u)\,\dd y
 =\int_Ua(x)V(v)\,\dd x.
 \label{eq:conformal-energy-change}
\end{align}
Consequently,
\begin{equation}\label{eq:conformal-system}
 -\eps\Delta v+\frac{a}{\eps}\nabla V(v)=0,
 \quad
 \partial_2v=0\quad\text{on }\{x_2=0\}.
\end{equation}
The coefficient stress in chart coordinates is related to the physical
stress by
\begin{equation}\label{eq:conformal-stress-change}
 T_{\eps,a}(v)
 =aQ^T\left(T_\eps(u)\circ\Psi\right)Q,
 \quad D\Psi=\sqrt a\,Q,
 \quad Q\in SO(2).
\end{equation}
Thus the tangential and normal components of the boundary stress identity
are preserved, up to the rotation $Q$, under the conformal change of
coordinates.

\begin{lem}\label{lem:measure-double}
Let $R(x_1,x_2):=(x_1,-x_2)$ and let $\lambda_j$ be diffuse Radon
measures on the upper half-ball with weak-star limit on the closed half-ball
$\lambda=\lambda^\circ+\lambda^\partial$, where
\begin{equation}\label{eq:open-boundary-decomposition}
 \lambda^\circ:=\lambda\llcorner B_{2\rho}^+,
 \quad
 \lambda^\partial:=\lambda\llcorner\Gamma_{2\rho}.
\end{equation}
If
$\widetilde\lambda_j:=\lambda_j+R_\#\lambda_j$, then
\begin{equation}\label{eq:measure-double-convergence}
 \widetilde\lambda_j\stackrel{*}{\rightharpoonup}\widetilde\lambda
 \quad\text{on }B_{2\rho},
\end{equation}
where
\begin{equation}\label{eq:measure-double}
 \widetilde\lambda
 =\lambda^\circ+R_\#\lambda^\circ+2\lambda^\partial.
\end{equation}
For a symmetric matrix-valued measure
$\Lambda_j=(\lambda_{j,ab})_{a,b=1}^2$ with limit
$\Lambda=\Lambda^\circ+\Lambda^\partial$, define
$\widetilde\Lambda_j:=\Lambda_j+Q(R_\#\Lambda_j)Q$, where
$Q=\operatorname{diag}(1,-1)$.  Then
\begin{equation}\label{eq:tensor-measure-double}
 \widetilde\Lambda_j\stackrel{*}{\rightharpoonup}
 \Lambda^\circ+Q(R_\#\Lambda^\circ)Q
 +\Lambda^\partial+Q\Lambda^\partial Q.
\end{equation}
Thus the diagonal fixed-line components are doubled and the mixed
fixed-line components cancel.
\end{lem}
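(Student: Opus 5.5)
The statement to prove is Lemma~\ref{lem:measure-double}. The plan is to test the doubled measures against an arbitrary $\phi\in C_c(B_{2\rho})$ and reduce everything to weak-star convergence on the closed half-ball. I read the weak-star convergence $\lambda_j\stackrel{*}{\rightharpoonup}\lambda$ on the closed half-ball as testing against functions continuous on $\overline{B_{2\rho}^+}$, or at least on $B_{2\rho}^+\cup\Gamma_{2\rho}$, with compact support in $B_{2\rho}$, so that the flat edge is included. Since $\lambda_j$ lives on $B_{2\rho}^+$, we have
\[
\int_{B_{2\rho}}\phi\,\dd\widetilde\lambda_j=\int_{B_{2\rho}^+}\phi\,\dd\lambda_j+\int_{B_{2\rho}^+}\phi\circ R\,\dd\lambda_j=\int_{B_{2\rho}^+}\psi\,\dd\lambda_j,
\qquad \psi:=\phi+\phi\circ R .
\]
The function $\psi$ is continuous on the closed half-ball and has compact support in $B_{2\rho}$, because $R$ preserves $B_{2\rho}$. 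Hence the integral converges to $\int\psi\,\dd\lambda$.

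Next, split $\lambda=\lambda^\circ+\lambda^\partial$. On $B_{2\rho}^+$ we get
\[
\int\psi\,\dd\lambda^\circ=\int\phi\,\dd\lambda^\circ+\int\phi\,\dd R_\#\lambda^\circ .
\]
On $\Gamma_{2\rho}$ we have $R=\mathrm{id}$, so $\psi=2\phi$ there, which gives the term $2\lambda^\partial$. Together these yield \eqref{eq:measure-double}. One small point needs checking: $\lambda$ has no mass on $\partial B_{2\rho}\cap\{x_2\geq0\}$ that would be seen by $\psi$. This is automatic because $\psi$ has compact support in the open ball.

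For the tensor statement I apply the scalar argument entrywise. The only new feature is the conjugation by $Q$: $(Q M Q)_{ab}=s_as_bM_{ab}$ with $s=(1,-1)$. So for each entry the doubled measure is $\lambda_{j,ab}+s_as_b R_\#\lambda_{j,ab}$, and testing against $\phi$ gives $\int(\phi+s_as_b\,\phi\circ R)\,\dd\lambda_{j,ab}$. The integrand is again continuous up to the edge with compact support, so the entry converges to $\lambda_{ab}^\circ+s_as_bR_\#\lambda_{ab}^\circ+(1+s_as_b)\lambda_{ab}^\partial$. This is exactly \eqref{eq:tensor-measure-double}. It also shows the final remark: on $\Gamma_{2\rho}$ the diagonal entries ($s_as_b=1$) are doubled, and the mixed entries ($s_1s_2=-1$) cancel.

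I do not expect a real obstacle. The only step needing care is justifying that the test functions $\phi+\phi\circ R$ are admissible for the half-ball convergence, and this is precisely why the convergence must be taken on the closed half-ball including $\Gamma_{2\rho}$ rather than only locally in the open half-ball. If only local convergence in $B^+_{2\rho}$ were available, the boundary mass $\lambda^\partial$ would be invisible and the lemma would fail.
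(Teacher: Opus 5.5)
Your proposal is correct and follows essentially the paper's route: test against $\phi\in C_c(B_{2\rho})$, rewrite the doubled integral as $\int_{B_{2\rho}^+}(\phi+\phi\circ R)\,\dd\lambda_j$, pass to the limit on the closed half-ball, and use $R=\mathrm{id}$ on $\Gamma_{2\rho}$ to obtain the factor $2\lambda^\partial$. For the tensor part, the paper only records that the evenly reflected map satisfies $(\nabla\widetilde u)^T\nabla\widetilde u=Q[(\nabla u)^T\nabla u](Rx)Q$, which explains why $\widetilde\Lambda_j$ is the right doubled object; your entrywise computation with the signs $s_as_b$ instead spells out the convergence and the doubling/cancellation on the fixed line.
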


\begin{proof}
For every $\phi\in C_c(B_{2\rho})$,
\begin{equation}\label{eq:measure-double-test}
\begin{aligned}
 \int\phi\,\dd\widetilde\lambda_j
 &=\int_{B_{2\rho}^+}(\phi+\phi\circ R)\,\dd\lambda_j
 \\
 &\to
 \int_{B_{2\rho}^+}(\phi+\phi\circ R)\,\dd\lambda^\circ
 +2\int_{\Gamma_{2\rho}}\phi\,\dd\lambda^\partial
 \\
 &=\int\phi\,\dd
 \left(\lambda^\circ+R_\#\lambda^\circ+2\lambda^\partial\right).
\end{aligned}
\end{equation}
If $\widetilde u=u\circ R$ in the lower half-ball, then
\begin{equation}\label{eq:reflected-gradient-tensor}
 \nabla\widetilde u(x)=\nabla u(Rx)Q,
 \quad
 (\nabla\widetilde u)^T\nabla\widetilde u
 =Q\left[(\nabla u)^T\nabla u\right](Rx)Q,
\end{equation}
which proves the tensor formula.
\end{proof}
\begin{rem}
Lemma~\ref{lem:measure-double} will be applied to the full- and
potential-energy measures $\upnu_j$ and $\upzeta_j$, and in its
matrix-valued form to the gradient measures $\upmu_j$.  It records that
doubling reflects the interior mass, while limiting mass supported on
the fixed line $\Gamma_{2\rho}$ is counted twice.  For tensor measures, the
conjugation by $Q$ accounts for the sign change of the mixed
tangential--normal components.
\end{rem}

\section{Perturbative clearing-out for the reflected equation}
\label{sec:perturbative-clearing-out}
\label{sec:var-clearing-out}

Let $V$ satisfy Assumption~\ref{assum:potential}.  Fix
$\upmu_V>0$ and $0<c_V\leq C_V$ so that the balls
$B(\upsigma_i,2\upmu_V)$ are pairwise disjoint and
\begin{equation}\label{eq:var-well}
\begin{aligned}
c_V\abs{y-\upsigma_i}^2
&\leq V(y)\leq C_V\abs{y-\upsigma_i}^2,\\
c_V\abs{y-\upsigma_i}^2
&\leq \nabla V(y)\cdot(y-\upsigma_i)
 \leq C_V\abs{y-\upsigma_i}^2.
\end{aligned}
\end{equation}
These inequalities hold when $\abs{y-\upsigma_i}\leq2\upmu_V$.  Decreasing
$\upmu_V$ if needed, we also require
that
\begin{equation}\label{eq:var-radial}
\nabla V(y)\cdot \frac{y-\upsigma_i}{\abs{y-\upsigma_i}}\geq0
\quad\text{if }0<\abs{y-\upsigma_i}\leq2\upmu_V.
\end{equation}

Fix $0<a_-\leq a_+<\infty$.  For
$a\in W^{1,\infty}(B_1)$ satisfying $a_-\leq a\leq a_+$, set
\begin{equation}\label{eq:var-energy}
 \begin{aligned}
 e_{\eps,a}(u)(x)
 &:=\frac{\eps}{2}\abs{\nabla u(x)}^2
 +\frac{a(x)}{\eps}V(u(x)),\\
 \cE_{\eps,a}(u;U)&:=\int_Ue_{\eps,a}(u)\,\dd x,\\
 \mathbb V_{\eps,a}(u;U)
 &:=\int_U\frac{a(x)}{\eps}V(u)\,\dd x.
\end{aligned}
\end{equation}
We consider
\begin{equation}\label{eq:var-pde}
-\eps\Delta u+\frac{a(x)}{\eps}\nabla V(u)=0
\quad\text{in }B_1.
\end{equation}

\begin{thm}[Variable-coefficient decay and clearing-out]
\label{thm:var-clearing-out}
There are
\[
\delta_0=\frac{a_-}{2},\quad
\eta_0=\eta_0(V,a_-,a_+)>0,\quad
C_0=C_0(V,a_-,a_+)<\infty
\]
such that, if $\norm{\nabla a}_{L^\infty}\leq\delta_0$,
$0<\eps\leq1$, and $u\in H^1(B_1;\R^k)$ is a weak solution of
\eqref{eq:var-pde}, then
\begin{equation}\label{eq:var-decay}
\cE_{\eps,a}\left(u;B_{\frac{1}{2}}\right)
\le C_0\{\cE_{\eps,a}(u;B_1)^{\frac{3}{2}}
          +\eps \cE_{\eps,a}(u;B_1)\}.
\end{equation}
If $\cE_{\eps,a}(u;B_1)\le2\eta_0$, then one
$\upsigma\in\Sigma$ satisfies
\begin{equation}\label{eq:var-confine}
\abs{u-\upsigma}\leq\frac{\upmu_V}{2}\quad\text{on }B_{\frac{3}{4}},
\end{equation}
and
\begin{equation}\label{eq:var-small-inner}
\cE_{\eps,a}\left(u;B_{\frac{5}{8}}\right)
\le C_0\eps \cE_{\eps,a}(u;B_1).
\end{equation}
The constants are uniform over this coefficient class.
\end{thm}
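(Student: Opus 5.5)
The plan is to follow Bethuel's $\eps$-level proof of the constant-coefficient decay estimate and treat $a(x)$ as a perturbation. The two structural identities then acquire lower-order error terms, which are controlled by $\norm{\nabla a}_{L^\infty}\le\delta_0$ and $a_-\le a\le a_+$. By Lemma~\ref{lem:global-neumann-Linf}-type truncation, applied locally to the interior equation \eqref{eq:var-pde}, we first obtain $\abs{u}\le L_V$. Elliptic regularity then gives $u\in C^{2,\alpha}_{\mathrm{loc}}$ and the gradient bound $\eps\abs{\nabla u}\le C$ on $B_{7/8}$, since the rescaled equation $-\Delta w+a(\eps\cdot)\nabla V(w)=0$ has bounded right-hand side.

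\textbf{Step 1 (decay \eqref{eq:var-decay}).} We use two identities.

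First, the forced stationary identity \eqref{eq:architecture-forced-stationary-identity}, $\diver T_{\eps,a}=\eps^{-1}V(u)\nabla a$, is tested against the radial field $x$ on $B_r$. This gives the Pohozaev formula
\[
\int_{B_r}\frac{a}{\eps}V(u)\,\dd x
=\frac r2\int_{\partial B_r}\Bigl(e_{\eps,a}(u)-\eps\abs{\partial_ru}^2\Bigr)
+O\Bigl(\delta_0 r\,\mathbb V_{\eps,a}(u;B_r)\Bigr).
\]
With $\delta_0\le a_-/2$, the error is absorbed, so that $\mathbb V_{\eps,a}(u;B_r)\le C r\int_{\partial B_r}e_{\eps,a}$.

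Second, we use the Bethuel multiplier: dot the equation with $u$ (or with $u-\upsigma$ near a well) and integrate against a cutoff. This controls the gradient energy on $B_{1/2}$ by the potential energy plus boundary terms. The pointwise inequality $\abs{\nabla V(u)}^2\le CV(u)$ holds on the bounded range, so the $a$-weight only changes constants by $a_\pm$.

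We then choose a good radius $r\in(1/2,3/4)$ by Fubini, so that $\int_{\partial B_r}e_{\eps,a}\le C\cE_{\eps,a}(u;B_1)$. On $\partial B_r$ we apply the one-dimensional estimate: the oscillation of $d_V$-type functions along the circle is bounded by $\int_{\partial B_r}e$. This is the source of the power $3/2$ after the quadratic well estimates \eqref{eq:var-well}. Combining the two identities then yields \eqref{eq:var-decay}, the $\eps$-term coming from the cutoff derivatives.

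\textbf{Step 2 (clearing-out \eqref{eq:var-confine}).} Assume $\cE_{\eps,a}(u;B_1)\le2\eta_0$. If $\abs{u(x_0)-\upsigma}>\upmu_V/2$ for every $\upsigma$ at some $x_0\in B_{3/4}$, the gradient bound $\eps\abs{\nabla u}\le C$ keeps $V(u)\ge c$ on $B_{c\eps}(x_0)$. This gives $\cE_{\eps,a}(u;B_{c\eps}(x_0))\ge c\eps$. To contradict this, we iterate the rescaled decay \eqref{eq:var-decay} on balls $B_{2^{-m}/8}(x_0)$. The rescaling \eqref{eq:architecture-scaling} keeps the coefficient class invariant, and the bound on $\nabla a$ only improves, by \eqref{eq:architecture-coefficient-smallness}.

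The normalised energy $E_m=2^m\cE(B_{2^{-m}})$ obeys $E_{m+1}\le C(E_m^{3/2}+2^m\eps E_m)$. For $\eta_0$ small this forces $E_m\lesssim\eta_0$ down to the scale $\eps$, and in fact $E_m\to$ small relative to the $c$ in $c\eps$, a contradiction once $\eta_0$ is chosen small. Connectivity then gives a single well, since the set where $u$ is near some well is open and the wells are separated.

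\textbf{Step 3 (\eqref{eq:var-small-inner}).} With $u$ confined near $\upsigma$ on $B_{3/4}$, set $w=\abs{u-\upsigma}^2$. Using \eqref{eq:var-well} and \eqref{eq:var-radial}, $w$ satisfies $-\eps^2\Delta w+c\,a_- w\le0$. Comparison with exponential barriers gives $w\le Ce^{-c/\eps}\sup w$ on $B_{5/8}$, and also $\eps\abs{\nabla u}\le Ce^{-c/\eps}$. Alternatively, we test the equation with $(u-\upsigma)\chi^2$, which gives directly
\[
\int\chi^2e_{\eps,a}\le C\eps\int\abs{\nabla\chi}^2\eps^{-1}\abs{u-\upsigma}^2\le C\eps\,\cE_{\eps,a}(u;B_1)
\]
by \eqref{eq:var-well}. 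I would use the latter, since it is cleaner.

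The main obstacle is Step 1, which reproduces Bethuel's decay mechanism. In particular, the circle estimate yielding the exponent $3/2$ must be carried out with the $a$-weighted potential. I expect the force term in the Pohozaev identity to be harmless only because it is proportional to the potential energy itself; this is why the condition $\delta_0=a_-/2$ should suffice.
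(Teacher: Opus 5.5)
Your Steps 2 and 3 are sound, but Step 1 has a genuine gap, and Step 2 depends on it. Write $\mathcal E:=\cE_{\eps,a}(u;B_1)$. Both identities you combine are linear in $\mathcal E$.

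On a good circle, Pohozaev gives $\mathbb V_{\eps,a}(u;B_r)\le Cr\int_{\partial B_r}e_{\eps,a}\le C\mathcal E$. The multiplier bound $\cE_{\eps,a}(u;B_{1/2})\le C[\mathbb V_{\eps,a}+\eps O(r)]$ then yields only $\cE_{\eps,a}(u;B_{1/2})\le C\mathcal E$.

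The circle estimate $\sup_{\partial B_r}\abs{u-\upsigma}\le C\sqrt{\mathcal E}$ does not change this, for two reasons:
\begin{itemize}
\item It does not make $\int_{\partial B_r}e_{\eps,a}$ smaller than $O(\mathcal E)$.
\item It cannot be used in a multiplier with $u-\upsigma$ over all of $B_r$. The term $\nabla V(u)\cdot(u-\upsigma)$ is neither coercive nor signed where $u$ leaves the well ball, for instance on small islands where $u$ is near another well.
\end{itemize}
The potential energy of those islands is exactly what must be shown to be $O(\mathcal E^{3/2}+\eps\mathcal E)$, and nothing in your Step 1 controls it. Also, dotting with $u$ is meaningless with several wells. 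The multiplier must be localized in the target, on sublevel sets $\{\abs{u-\upsigma_i}<s\}$ at regular values, with the far-from-wells set handled by Lemma~\ref{lem:var-far}.

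The missing mechanism, which is how the paper proves Proposition~\ref{prop:var-decay}, has three parts.

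\emph{(a) Refined well estimate.} If $\abs{u-\upsigma}<\kappa<\upmu_V/4$ on $\partial B_\rho$, then $\cE_{\eps,a}(u;\Upsilon(\kappa,\rho))\le C[\kappa\,\mathbb V_{\eps,a}(u;B_\rho)+\eps O(\rho)]$. The factor $\kappa$ comes from the level-set flux term $\eps\kappa\int_{\{\abs{u-\upsigma_i}=\kappa\}}\partial_n\abs{u-\upsigma_i}$ in the multiplier identity on $\{\abs{u-\upsigma_i}<\kappa\}$. This flux is non-decreasing in the level: test with $(u-\upsigma_i)/\abs{u-\upsigma_i}$ on the target annulus and use \eqref{eq:var-radial}. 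Hence it is bounded by the flux at a level in $[\upmu_V/4,\upmu_V/2]$, which is $\lesssim\mathbb V_{\eps,a}$. With $\kappa\sim\sqrt{\mathcal E}$ this gives $\mathcal E^{3/2}+\eps\mathcal E$, but only for the $\kappa$-well region.

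\emph{(b) An inner circle inside the well region.} Coarea on $[\kappa/2,\kappa]$ gives a regular level $s_0$ of length $\lesssim\mathcal E/\kappa^2\le\frac{1}{32}$. Since $\abs{u-\upsigma}<s_0$ on $\partial B_\rho$, no circle $\partial B_r$ with $r\ge\frac58$ can lie in $\{\abs{u-\upsigma}>s_0\}$; otherwise Lemma~\ref{lem:var-planar-separation} would produce a level curve of length $\ge 2\pi r$. So every circle leaving $\{\abs{u-\upsigma}\le\kappa\}$ meets the short level set, and these radii have measure $\le\frac{1}{32}$. Fubini and (a) then give $\tau\in[\frac58,\rho]$ with $\partial B_\tau\subset\{\abs{u-\upsigma}\le\kappa\}$ and $\int_{\partial B_\tau}e_{\eps,a}\lesssim\mathcal E^{3/2}+\eps\mathcal E$.

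\emph{(c) Pohozaev on the inner circle.} Only now is Pohozaev effective. On $B_\tau$ it bounds everything inside, islands included: $\mathbb V_{\eps,a}(u;B_{5/8})\lesssim\mathcal E^{3/2}+\eps\mathcal E$. The full-energy multiplier bound at a good radius then finishes. The superlinearity comes from selecting a circle \emph{inside} the $\kappa$-well region, not from the outer circle estimate.

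Two smaller points.
\begin{itemize}
\item A universal bound $\abs{u}\le L_V$ fails for interior solutions. With $a\equiv1$ and $V=\frac{1}{2}\abs{y}^2$ near infinity, $(Ae^{x_1/\eps},0,\ldots,0)$ is a solution with arbitrarily large supremum. Use instead the energy-dependent bound of Lemma~\ref{lem:var-far}; it suffices because only small energies matter.
\item Granting the decay estimate, your Step 2 is correct and simpler than the paper's. While $2^m\eps\le(8C_0)^{-1}$, the normalized energies contract geometrically, so the scale-invariant energy at scale $\sim C_0\eps$ is $O(\eta_0)$. This contradicts the $c\eps$ lower bound coming from $\eps\abs{\nabla u}\le C$.
\end{itemize}
The paper instead absorbs the linear term via $E_n\ge\eps^2/r_n$, obtains super-exponential decay, and hands off to Lemma~\ref{lem:var-weak} at a scale $\ge 2\eps$. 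Your cutoff version of Step 3 is equivalent to the paper's good-radius multiplier argument.
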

\begin{rem}
Estimate~\eqref{eq:var-decay} extends Bethuel's Proposition~1.12 to variable
coefficients.  The confinement and inner-energy estimates
\eqref{eq:var-confine}--\eqref{eq:var-small-inner} yield the corresponding,
slightly weaker, form of Bethuel's Theorem~1.11; the latter also records the
quantitative bound
$\abs{u-\upsigma}\leq C\cE_\eps(u;B_1)^{\frac{1}{6}}$.
\end{rem}

\subsection{Preliminary estimates}

We first record the circle, range, and far-from-well estimates used in the
decay argument.

Choose a non-decreasing Lipschitz function
$\psi:[0,\infty)\to\left[0,\frac{3\upmu_V}{4}\right]$, smooth on
$(0,\infty)$, such that
\[
 \psi(t)=t\quad\text{for }t\leq\frac{\upmu_V}{2},
 \quad
 \psi(t)\geq\frac{\upmu_V}{2}\quad\text{for }t\geq\frac{\upmu_V}{2},
 \quad
 \psi(t)=\frac{3\upmu_V}{4}\quad\text{for }t\geq\upmu_V,
\]
with $0\leq\psi'\leq1$, and require $\psi^2$ to be smooth at zero.  Set
$\chi_i(y):=\psi(\abs{y-\upsigma_i})$.  Thus
$\chi_i<\frac{\upmu_V}{2}$ forces
$\abs{y-\upsigma_i}<\frac{\upmu_V}{2}$.  All level values
used below are strictly positive, where $\chi_i$ is smooth; the Lipschitz
coarea formula provides the same conclusion directly.
Put $w_i:=\chi_i\circ u$ and $J(u):=\abs{\nabla u}\sqrt{V(u)}$.
Young's inequality and \eqref{eq:var-well} yield
\begin{equation}\label{eq:var-MM}
\sqrt{2a_-}\,J(u)\leq e_{\eps,a}(u),\quad
\abs{\nabla(w_i^2)}\leq C(V,a_-)e_{\eps,a}(u).
\end{equation}

\begin{lem}\label{lem:var-circle}
There are $h_0,C_{\rm cir}>0$, depending only on
$V,a_-,a_+$, such that, if $r\in[\frac{1}{2},1)$ and
\[
O(r):=\int_{\partial B_r}e_{\eps,a}(u)\le h_0,
\]
then some $\upsigma\in\Sigma$ satisfies
\[
\sup_{\partial B_r}\abs{u-\upsigma}\leq C_{\rm cir}\sqrt{O(r)}.
\]
Also, if $0\leq r_0<r_1\leq1$, some $r\in[r_0,r_1]$ satisfies
\[
O(r)\le \frac{\cE_{\eps,a}(u;B_{r_1}\backslash B_{r_0})}{r_1-r_0}.
\]
\end{lem}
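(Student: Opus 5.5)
The second assertion comes first, because it is the averaging step that the first assertion relies on. Since $r\mapsto O(r)$ is integrable on $[r_0,r_1]$, the coarea formula in polar coordinates gives
\[
 \int_{r_0}^{r_1}O(r)\,\dd r=\cE_{\eps,a}(u;B_{r_1}\backslash B_{r_0}).
\]
If $O(r)$ exceeded the average $\cE_{\eps,a}(u;B_{r_1}\backslash B_{r_0})/(r_1-r_0)$ for every $r\in[r_0,r_1]$, integrating over $[r_0,r_1]$ would give a strict contradiction. Hence some $r$ satisfies the bound; the set of such $r$ even has positive measure, so we may also take $r$ to be a radius at which the trace of $u$ on $\partial B_r$ is $H^1$.

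For the first assertion, fix $r\in[\frac12,1)$ with $O(r)\le h_0$. Since $u\in C^{2,\alpha}$ by the regularity remark following Lemma~\ref{lem:global-neumann-Linf} (or, in general, for a.e.\ $r$), the restriction of $u$ to the circle is $H^1$ and hence continuous.

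The first step is to show that $u$ stays near a single well on $\partial B_r$. Apply \eqref{eq:var-MM} on the circle, using only tangential derivatives. For any arc $\gamma\subset\partial B_r$, the one-dimensional chain rule applied to $d_V(u,\cdot)$, or equivalently to the $\chi_i$, gives
\[
 \operatorname{osc}_{\gamma}\Phi(u)\le\int_{\partial B_r}\sqrt{2V(u)}\abs{\partial_\tau u}\le\sqrt{a_-}^{-1}O(r).
\]
Next, the potential part of $O(r)$ controls the set where $u$ is far from $\Sigma$:
\[
 \int_{\partial B_r}V(u)\le\eps O(r)/a_-\le O(r)/a_-.
\]
Since $2\pi r\ge\pi$, some point $x_0\in\partial B_r$ has $V(u(x_0))\le O(r)/(\pi a_-)$, which is small. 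By the coercivity in Assumption~\ref{assum:potential}(iii) and \eqref{eq:var-well}, $u(x_0)$ is then close to some $\upsigma_i$. Combining this with the smallness of the oscillation of the degenerate distance $d_V(u,\upsigma_i)$ along the whole circle shows that $u(\partial B_r)\subset B(\upsigma_i,\upmu_V/2)$ once $h_0$ is small. Here we use that near a well $d_V(y,\upsigma_i)\approx\abs{y-\upsigma_i}^2$ by nondegeneracy, and that leaving $B(\upsigma_i,\upmu_V/2)$ costs a definite amount of $d_V$.

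The second step upgrades this to the square-root rate. Inside the well neighbourhood set $w=\abs{u-\upsigma_i}$, so that \eqref{eq:var-well} gives $\sqrt{V(u)}\ge\sqrt{c_V}\,w$. Then
\[
 \abs{\partial_\tau(w^2)}\le 2w\abs{\partial_\tau u}\le C\sqrt{V(u)}\abs{\nabla u}\le C e_{\eps,a}(u),
\]
so $\operatorname{osc}_{\partial B_r}w^2\le C\,O(r)$. Also, $\min_{\partial B_r}w^2\le C\int_{\partial B_r}V(u)\le C\,O(r)$. Together these give $\sup_{\partial B_r}w^2\le C\,O(r)$, which is the stated bound with $C_{\rm cir}=\sqrt C$.

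The main obstacle is the first, confinement, step. The estimate must not depend on $\eps$, and a single point near a well is not enough by itself. What closes the argument is that the potential-weighted arclength of $u$ along the circle is small, and this is exactly what excludes excursions between wells or away from $\Sigma$. In that step I would work with the truncated $d_V$-functions $\Phi_\alpha$ from Proposition~\ref{prop:unconditional}, rather than with the $\chi_i$.
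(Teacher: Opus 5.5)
\paragraph{Comparison with the paper's proof.} Your argument is correct. Its skeleton matches the paper's: a base point on $\partial B_r$ where $V(u)\lesssim O(r)$, obtained by averaging the potential over the circle using $r\geq\frac12$, followed by an oscillation bound along the circle coming from $\sqrt{V(u)}\abs{\nabla u}\lesssim e_{\eps,a}(u)$.

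\paragraph{Where the routes differ.} The difference is the auxiliary function. The paper works only with the truncated Euclidean quantity $w_\upsigma^2=\chi_\upsigma(u)^2$. Because $\psi$ is constant on $[\upmu_V,\infty)$ and $\psi(t)\leq t$, the bound $\abs{\nabla(w_\upsigma^2)}\leq C e_{\eps,a}(u)$ in \eqref{eq:var-MM} holds everywhere, with no prior confinement. A single oscillation estimate from the base point then gives $w_\upsigma^2<\frac{\upmu_V^2}{8}$ on the whole circle. Since $\chi_\upsigma<\frac{\upmu_V}{2}$ forces $\abs{u-\upsigma}<\frac{\upmu_V}{2}$, this yields confinement and the rate $\sup_{\partial B_r}\abs{u-\upsigma}^2\leq C\,O(r)$ in one step.

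Your closing remark, that the $\chi_i$ should be avoided in the confinement step, is therefore unfounded. The truncation is exactly what makes them work there, and your Step~2 is the paper's argument run after confinement.

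\paragraph{Your $d_V$ route.} It is also sound. The oscillation bound is just the triangle inequality for $d_V$, using the restriction of $u$ to an arc as a competitor path. In fact it already gives the rate. For any competitor path $\gamma$ from $\upsigma_i$ to $y$, set $\rho:=\abs{\gamma-\upsigma_i}$. Then $\sqrt{2V(\gamma)}\abs{\gamma'}\geq\sqrt{2c_V}\,\rho\abs{\rho'}$ wherever $\rho\leq2\upmu_V$, whence
\[
d_V(y,\upsigma_i)\geq\sqrt{c_V/2}\,\abs{y-\upsigma_i}^2\quad\text{for }\abs{y-\upsigma_i}\leq2\upmu_V.
\]
So $\abs{u-\upsigma_i}^2\leq C\,d_V(u,\upsigma_i)\leq C\,O(r)$ directly, and your Step~2 is redundant. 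The paper's choice buys a short, purely Euclidean proof with no geodesic distance. Yours buys a confinement mechanism independent of any particular cut-off profile.

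\paragraph{A small correction.} Continuity of $u$ on $\partial B_r$ here comes from interior elliptic regularity for the equation on $B_1$: local boundedness as in the proof of Lemma~\ref{lem:var-far}, then $W^{2,p}$ estimates. It does not come from the Neumann regularity remark after Lemma~\ref{lem:global-neumann-Linf}, which concerns $u_\eps$ on $\Omega$.
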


\noindent\emph{Bethuel correspondence.}
The first assertion is the variable-coefficient version of
\cite[Lemma 2.6]{Bethuel2025}; the selection of a good radius is the coarea
step used in \cite[Lemma 2.7]{Bethuel2025}.  The only modification is the
uniform comparison $a_-V\leq aV\leq a_+V$.

\begin{proof}
The coarea identity ensures
\begin{equation}\label{eq:circle-Fubini}
 \int_{r_0}^{r_1}O(s)\,\dd s
 =\cE_{\eps,a}(u;B_{r_1}\backslash B_{r_0}),
\end{equation}
which proves the second assertion.  Since $r\geq\frac{1}{2}$,
\begin{equation}\label{eq:circle-base-point}
 \min_{x\in\partial B_r}V(u(x))
 \leq\frac{1}{2\pi r}\int_{\partial B_r}V(u)\,\dd\cH^1
 \leq\frac{\eps}{\pi a_-}O(r)\leq\frac{1}{\pi a_-}O(r).
\end{equation}
For $O(r)\leq h_0$ and $h_0$ below the fixed well-separation threshold,
there exist $x_r\in\partial B_r$ and a unique $\upsigma\in\Sigma$ such that
\begin{equation}\label{eq:circle-base-well}
 \abs{u(x_r)-\upsigma}\leq C(V,a_-)O(r)^{\frac{1}{2}}.
\end{equation}
For every $x\in\partial B_r$, integration along either circular arc from
$x_r$ to $x$ and \eqref{eq:var-MM} yield
\begin{equation}\label{eq:circle-oscillation}
\begin{aligned}
 \abs{w_\upsigma(x)^2-w_\upsigma(x_r)^2}
 &\leq\int_{\partial B_r}\abs{\nabla(w_\upsigma^2)}\,\dd\cH^1
 \leq C(V,a_-)O(r).
\end{aligned}
\end{equation}
Choose $h_0$ so that
\[
 \abs{u(x_r)-\upsigma}<\frac{\upmu_V}{4},
 \quad
 \sup_{x\in\partial B_r}
 \abs{w_\upsigma(x)^2-w_\upsigma(x_r)^2}
 <\frac{\upmu_V^2}{16}.
\]
Then $w_\upsigma(x_r)=\abs{u(x_r)-\upsigma}$ and
$w_\upsigma^2<\frac{\upmu_V^2}{8}$ throughout $\partial B_r$.
Consequently, $w_\upsigma=\abs{u-\upsigma}$ on the entire circle, and
\begin{equation}\label{eq:circle-final}
 \sup_{\partial B_r}\abs{u-\upsigma}^2
 \leq C(V,a_-)O(r).
\end{equation}
\end{proof}

\begin{lem}\label{lem:var-far}
If $\cE_{\eps,a}(u;B_1)\leq M$, then
\[
 \norm{u}_{L^\infty(B_{\frac{7}{8}})}
 +\eps\norm{\nabla u}_{L^\infty(B_{\frac{4}{5}})}
 \leq C(M,V,a_-,a_+).
\]
On
\[
 \Xi:=\left\{x\in B_{\frac{4}{5}}:\dist(u(x),\Sigma)
 \geq\frac{\upmu_V}{4}\right\},
\]
one has
\begin{equation}\label{eq:var-far}
 e_{\eps,a}(u)
 \leq C(M,V,a_-,a_+)\frac{a(x)}{\eps}V(u).
\end{equation}
\end{lem}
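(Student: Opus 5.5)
The plan is to get an $L^\infty$ bound from a maximum principle, then a gradient bound from scaling plus elliptic regularity, and finally to read off the far-from-well estimate from these two bounds.

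\textbf{Range bound.} I do not expect Lemma~\ref{lem:global-neumann-Linf} to apply directly, since no boundary condition is imposed on $\partial B_1$. Instead I will use the local analogue. Set $\rho:=\abs{u}^2$. By Assumption~\ref{assum:potential}(iii), wherever $\abs{u}\geq R_V$ one has
\[
\eps\Delta\rho\geq 2\eps\abs{\nabla u}^2+\frac{2a}{\eps}u\cdot\nabla V(u)\geq\frac{2a_-\alpha_\infty}{\eps}\rho .
\]
Hence $(\rho-R_V^2)_+$ is a subsolution of $\Delta w\geq c\eps^{-2}w$ with $c=2a_-\alpha_\infty$. Local boundedness for such subsolutions (Moser iteration, or comparison with an exponential barrier on balls of radius $\eps$) gives
\[
\sup_{B_{7/8}}(\rho-R_V^2)_+\leq C\fint_{B_{15/16}}(\rho-R_V^2)_+ .
\]
The right side is finite and controlled by $M$, because $\int_{B_1}\abs{u}^2\leq C(1+\eps M/a_-)$ by the coercivity inequality \eqref{eq:potential-coercive-global}. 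This yields $\norm{u}_{L^\infty(B_{7/8})}\leq C(M,V,a_-,a_+)$.

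\textbf{Gradient bound.} Fix $x_0\in B_{4/5}$ and rescale: $\tilde u(y):=u(x_0+\eps y)$ on $B_{1/80}(0)$ after scaling, that is, on the ball of radius $\frac{1}{80\eps}\geq\frac1{80}$. Then $-\Delta\tilde u=-\tilde a\nabla V(\tilde u)$, where $\tilde a(y)=a(x_0+\eps y)$ is bounded. By the range bound the right-hand side is bounded by $a_+\sup_{\abs y\leq C}\abs{\nabla V}$. Interior $W^{2,p}$ estimates, with $p>2$, and Sobolev embedding then give $\abs{\nabla\tilde u(0)}\leq C$, so $\eps\abs{\nabla u(x_0)}\leq C$. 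The only care needed is that the rescaled ball has radius bounded below uniformly in $\eps\leq1$.

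\textbf{Far-from-well estimate.} This is the main point. On $\Xi$, the range bound places $u(x)$ in the compact set $\{\abs y\leq C,\ \dist(y,\Sigma)\geq\upmu_V/4\}$, where $V\geq c_1(M,V)>0$. The gradient bound gives
\[
\frac\eps2\abs{\nabla u}^2\leq\frac{C}{\eps}\leq\frac{C}{c_1a_-}\cdot\frac{a(x)V(u)}{\eps}.
\]
Adding the potential term gives \eqref{eq:var-far}. I expect the main obstacle to be the local $L^\infty$ bound without boundary data. The subsolution inequality has to be justified weakly for $H^1$ solutions, for instance via the test function $(\abs u^2-R_V^2)_+\eta^2 u$ truncated as in Lemma~\ref{lem:global-neumann-Linf}. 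If that proves awkward, I would first establish interior regularity $u\in C^{2}$ by bootstrapping, which is possible since $V$ is smooth; the growth of $\nabla V$ may require a preliminary truncation argument.
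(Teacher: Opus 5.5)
Your proposal is correct and matches the paper's proof step for step. Both derive the subsolution inequality for $\abs{u}^2$ from Assumption~\ref{assum:potential}(iii), apply the mean-value inequality to the subharmonic function $(\abs{u}^2-R_V^2)_+$ together with the $L^2$ bound from coercivity, rescale by $\eps$ and use interior $W^{2,p}$ estimates, and finally use the positive lower bound of $V$ on the compact set $\{\abs{y}\le C,\ \dist(y,\Sigma)\ge\upmu_V/4\}$. The one extra point you raise, justifying the weak subsolution inequality for merely $H^1$ solutions, is passed over in the paper; it does not matter in the paper's application, where the global range bound and $C^{2,\alpha}$ regularity are already available.
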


\begin{proof}
Assumption~\ref{assum:potential}(iii) implies
$y\cdot\nabla V(y)\geq c\abs{y}^2-C$ and
$V(y)\geq c\abs{y}^2-C$.  Hence
\[
 -\Delta\abs{u}^2+c_1\eps^{-2}\abs{u}^2\leq C_1\eps^{-2},
 \quad
 \int_{B_1}\abs{u}^2\dd x\leq C(1+M).
\]
The positive part of $\abs{u}^2-\frac{C_1}{c_1}$ is subharmonic, so the
mean-value inequality yields the $L^\infty$ estimate.  Rescaling by
$\eps$, interior $W^{2,p}$ estimates and Sobolev embedding imply
$\eps\abs{\nabla u}\leq C$.  On $\Xi$, boundedness of $u$ and finiteness
of $\Sigma$ imply $V(u)\geq c(M,V)>0$, which proves
\eqref{eq:var-far}.
\end{proof}

\subsection{Target-space multipliers}

Target-space truncations compare the energy near the wells with the
potential energy and a boundary error.

For $s\leq\frac{\upmu_V}{2}$, set
\[
 \Omega_i(s,\rho):=\{x\in B_\rho:\abs{u-\upsigma_i}<s\},
 \quad
 \Upsilon(s,\rho):=\bigcup_i\Omega_i(s,\rho).
\]
At a regular value, write
\[
 \Gamma_i(s,\rho)
 :=\partial\Omega_i(s,\rho)\cap B_\rho,
 \quad
 \Pi_i(s,\rho)
 :=\partial\Omega_i(s,\rho)\cap\partial B_\rho.
\]
On $\Gamma_i(s,\rho)$ let
\begin{equation}\label{eq:level-normal-definition}
 n_i:=\frac{\nabla\abs{u-\upsigma_i}}{\abs{\nabla\abs{u-\upsigma_i}}},
\end{equation}
the outer unit normal to $\Omega_i(s,\rho)$, and on
$\Pi_i(s,\rho)$ let $n_\rho(x):=\frac{x}{\rho}$.

\begin{lem}\label{lem:var-well-energy}
Assume $\cE_{\eps,a}(u;B_1)\leq M$ and
$\rho\in[\frac{9}{16},\frac{4}{5}]$.  With
\[
 \mathbb V(\rho):=\mathbb V_{\eps,a}(u;B_\rho),
 \quad
 O(\rho)=\int_{\partial B_\rho}e_{\eps,a}(u)\dd\cH^1,
\]
one has
\begin{equation}\label{eq:var-fixed-well}
 \cE_{\eps,a}\left(u;\Upsilon\left(\frac{\upmu_V}{4},\rho\right)\right)
 \leq C_M[\mathbb V(\rho)+\eps O(\rho)].
\end{equation}
If $\abs{u-\upsigma_1}<\kappa<\frac{\upmu_V}{4}$ on $\partial B_\rho$, then
\begin{equation}\label{eq:var-refined-well}
 \cE_{\eps,a}(u;\Upsilon(\kappa,\rho))
 \leq C_M[\kappa \mathbb V(\rho)+\eps O(\rho)].
\end{equation}
Consequently,
\begin{equation}\label{eq:var-full-by-P}
 \cE_{\eps,a}(u;B_\rho)
 \leq C_M[\mathbb V(\rho)+\eps O(\rho)].
\end{equation}
\end{lem}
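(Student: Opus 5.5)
The plan is to test the equation with the target-space multiplier $u-\upsigma_i$ on the sublevel sets $\Omega_i(s,\rho)$, and to handle the far-from-well region with Lemma~\ref{lem:var-far}. For a regular value $s\le\frac{\upmu_V}{2}$, I will multiply \eqref{eq:var-pde} by $u-\upsigma_i$ and integrate over $\Omega_i(s,\rho)$. Green's formula then gives
\[
 \int_{\Omega_i(s,\rho)}\Bigl[\eps\abs{\nabla u}^2+\frac{a}{\eps}\nabla V(u)\cdot(u-\upsigma_i)\Bigr]
 =\eps\int_{\Gamma_i(s,\rho)}(u-\upsigma_i)\cdot\partial_{n_i}u
 +\eps\int_{\Pi_i(s,\rho)}(u-\upsigma_i)\cdot\partial_{n_\rho}u .
\]

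\emph{Left side.} By \eqref{eq:var-well}, $\nabla V(u)\cdot(u-\upsigma_i)\ge c_V\abs{u-\upsigma_i}^2\ge (c_V/C_V)V(u)$. Hence the left side dominates $c\,\cE_{\eps,a}(u;\Omega_i(s,\rho))$. Conversely, the potential part of that energy is at most $\mathbb V(\rho)$ trivially.

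\emph{$\Pi_i$ term.} On $\Pi_i$ I will use $\abs{u-\upsigma_i}^2\le C V(u)/c_V$ and Young's inequality:
\[
 \eps\abs{u-\upsigma_i}\abs{\nabla u}\le \eps\Bigl[\tfrac{\eps}{2}\abs{\nabla u}^2+\tfrac{1}{2\eps}\abs{u-\upsigma_i}^2\Bigr]\le C\eps\, e_{\eps,a}(u).
\]
This contributes at most $C\eps O(\rho)$.

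\emph{$\Gamma_i$ term.} On $\Gamma_i$ one has $(u-\upsigma_i)\cdot\partial_{n_i}u=s\abs{\nabla\abs{u-\upsigma_i}}\ge0$. This term has the wrong sign, so I will average in $s$. Integrating over $s\in(\frac{\upmu_V}{4},\frac{\upmu_V}{2})$ and applying the coarea formula turns $\int s\,\eps\int_{\Gamma_i(s)}\abs{\nabla\abs{u-\upsigma_i}}\,\dd s$ into
\[
 \eps\int_{\{\frac{\upmu_V}{4}<\abs{u-\upsigma_i}<\frac{\upmu_V}{2}\}\cap B_\rho}\abs{u-\upsigma_i}\,\abs{\nabla\abs{u-\upsigma_i}}^2 .
\]
This set lies inside the far set $\Xi$ of Lemma~\ref{lem:var-far}, because $\rho\le\frac45$ and the wells are $4\upmu_V$-separated. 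There \eqref{eq:var-far} bounds it by $C_M\mathbb V(\rho)$. Dividing by the interval length $\upmu_V/4$ and using monotonicity of $s\mapsto\cE_{\eps,a}(u;\Omega_i(s,\rho))$ gives \eqref{eq:var-fixed-well} after summing over $i$.

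\emph{Full-ball estimate.} The bound \eqref{eq:var-full-by-P} then follows by splitting $B_\rho=\Upsilon(\frac{\upmu_V}{4},\rho)\cup(B_\rho\cap\Xi)$. The first piece is controlled by \eqref{eq:var-fixed-well}, the second by \eqref{eq:var-far}.

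\emph{Refined estimate and main obstacle.} The refined bound \eqref{eq:var-refined-well} is the step I expect to be the obstacle, since the factor $\kappa$ must be gained. I will rerun the identity at levels $s\le\kappa$.

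For $i\neq1$, the hypothesis $\abs{u-\upsigma_1}<\kappa$ on $\partial B_\rho$ forces $\Pi_i=\emptyset$ for the sublevel sets $\Omega_i(s,\rho)$. For $i=1$, with $s$ slightly above $\kappa$, the hypothesis means $\Gamma_1(s,\rho)$ stays in the interior. The $\Pi_1$ contribution is still $\le C\eps O(\rho)$.

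For the $\Gamma_i$ terms I will bound $s\,\eps\int_{\Gamma_i(s)}\abs{\nabla\abs{u-\upsigma_i}}$ by averaging over $s\in(\kappa,2\kappa)$. The coarea formula gives at most $(1/\kappa)\cdot 2\kappa\cdot\eps\int_{\Omega_i(2\kappa)}\abs{\nabla u}^2\cdot\abs{u-\upsigma_i}$. The extra weight $\abs{u-\upsigma_i}\le2\kappa$ must then produce the gain. Concretely, I would try $\varphi=(u-\upsigma_i)\abs{u-\upsigma_i}$-type multipliers, so that the gradient integral carries a factor $\abs{u-\upsigma_i}$. I would also try bounding the potential side through $V\le C_V\abs{u-\upsigma_i}^2\le C\kappa\abs{u-\upsigma_i}$. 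Either route should yield $C_M[\kappa\mathbb V(\rho)+\eps O(\rho)]$, using \eqref{eq:var-fixed-well} as the input bound on $\Omega_i(\frac{\upmu_V}{4},\rho)$.
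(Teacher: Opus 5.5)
Your proofs of \eqref{eq:var-fixed-well} and \eqref{eq:var-full-by-P} are fine and are essentially the paper's; you average the multiplier identity over $s\in(\frac{\upmu_V}{4},\frac{\upmu_V}{2})$ where the paper picks one good regular level $s_*$ by coarea. The gap is in \eqref{eq:var-refined-well}: nothing in your sketch actually produces the factor $\kappa$. Averaging over $s\in(\kappa,2\kappa)$ gives, by coarea,
\[
 \frac{\eps}{\kappa}\int_{\{\kappa<\abs{u-\upsigma_i}<2\kappa\}\cap B_\rho}
 \abs{u-\upsigma_i}\,\abs{\nabla\abs{u-\upsigma_i}}^2\,\dd x
 \leq2\eps\int_{\Omega_i(2\kappa,\rho)}\abs{\nabla u}^2\,\dd x .
\]
The weight $\abs{u-\upsigma_i}\leq2\kappa$ is exactly what cancels the $\frac1\kappa$ of the average, so it cannot be used a second time as your displayed bound does. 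You are then bounding the energy of $\Omega_i(\kappa,\rho)$ by that of $\Omega_i(2\kappa,\rho)$, which is circular, and inserting \eqref{eq:var-fixed-well} gives no factor $\kappa$. Coarea only turns level-set fluxes back into Dirichlet energy, so averaging alone gains at most a logarithm; for instance, weight $\frac1s$ on $(\kappa,\frac{\upmu_V}{4})$ gives $\abs{\log\kappa}^{-1}$. The multiplier $(u-\upsigma_i)\abs{u-\upsigma_i}$ puts the weighted quantity $\int\abs{u-\upsigma_i}\,e_{\eps,a}(u)$ on the left, and that does not control the unweighted energy near the well. Writing $V\leq C\kappa\abs{u-\upsigma_i}$ merely shifts the problem to bounding $\int_{\Omega_i(\kappa,\rho)}\frac{a}{\eps}\abs{u-\upsigma_i}$, which is again a flux of $\abs{u-\upsigma_i}$ through $\Gamma_i(\kappa,\rho)$.

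The missing idea, which is how the paper gets the gain, is monotonicity of the level-set flux, coming from subharmonicity of $w_i:=\abs{u-\upsigma_i}$ near the well. Where $0<w_i\leq2\upmu_V$, \eqref{eq:var-radial} gives
\[
 \Delta w_i=\frac{\abs{\nabla u}^2-\abs{\nabla w_i}^2}{w_i}
 +\frac{a}{\eps^2}\nabla V(u)\cdot\frac{u-\upsigma_i}{w_i}\geq0 .
\]
Under the hypothesis $\abs{u-\upsigma_1}<\kappa$ on $\partial B_\rho$, the closure of $\{\kappa<w_i<s_*\}\cap B_\rho$ does not meet $\partial B_\rho$: for $i=1$ this is direct, and for $i\neq1$ it follows from separation of the wells. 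Integrating $\Delta w_i\geq0$ over this region, i.e.\ testing the equation with $\frac{u-\upsigma_i}{\abs{u-\upsigma_i}}$, gives
\[
 \eps\int_{\Gamma_i(\kappa,\rho)}\partial_{n_i}w_i\,\dd\cH^1
 \leq\eps\int_{\Gamma_i(s_*,\rho)}\partial_{n_i}w_i\,\dd\cH^1
 \leq C_M\mathbb V(\rho).
\]
Here $s_*\in[\frac{\upmu_V}{4},\frac{\upmu_V}{2}]$ is a regular level chosen by coarea on the shell, which your first step supplies once you extract a single level instead of averaging. In the multiplier identity at $s=\kappa$, the $\Gamma_i$ terms are then $\eps\kappa\int_{\Gamma_i(\kappa,\rho)}\partial_{n_i}w_i\leq C_M\kappa\mathbb V(\rho)$. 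The only boundary piece is $\Pi_1(\kappa,\rho)=\partial B_\rho$, contributing $C\eps O(\rho)$, and \eqref{eq:var-refined-well} follows, with non-regular $\kappa$ handled by approximation. So the input you need from the first step is this flux bound at an outer level, not the energy bound \eqref{eq:var-fixed-well}.
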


\begin{proof}
Multiply \eqref{eq:var-pde} by $u-\upsigma_i$ on
$\Omega_i(s,\rho)$.  The signed identity is
\begin{align}
 Q_i(s,\rho)
 &:=\int_{\Omega_i(s,\rho)}
 \left[\eps\abs{\nabla u}^2
 +\frac{a}{\eps}\nabla V(u)\cdot(u-\upsigma_i)\right]\dd x
 \label{eq:var-Q}\\
 &=\eps s\int_{\Gamma_i(s,\rho)}
 \partial_{n_i}\abs{u-\upsigma_i}\dd\cH^1
 +\eps\int_{\Pi_i(s,\rho)}
 \partial_{n_\rho}u\cdot(u-\upsigma_i)\dd\cH^1.
 \label{eq:var-Qbdry}
\end{align}
No derivative of $a$ occurs.  By \eqref{eq:var-well},
\[
 \cE_{\eps,a}(u;\Omega_i(s,\rho))\leq C(V)Q_i(s,\rho),
\]
and on $\Pi_i(s,\rho)$,
\[
 \eps\abs{\partial_{n_\rho}u\cdot(u-\upsigma_i)}
 \leq C(V,a_-)\eps e_{\eps,a}(u).
\]

On the shell
\[
 \mathcal S
 :=\bigcup_i\left\{\frac{\upmu_V}{4}<\abs{u-\upsigma_i}
 <\frac{\upmu_V}{2}\right\}\cap B_\rho,
\]
Lemma~\ref{lem:var-far} yields
$\cE_{\eps,a}(u;\mathcal S)\leq C_M\mathbb V(\rho)$.  Coarea provides a common
regular $s_*\in[\frac{\upmu_V}{4},\frac{\upmu_V}{2}]$ such that
\[
 \eps\sum_i\int_{\Gamma_i(s_*,\rho)}\abs{\nabla u}\dd\cH^1
 \leq C_M\mathbb V(\rho).
\]
Insert this estimate in \eqref{eq:var-Qbdry}, sum over $i$, and use
$\Upsilon\left(\frac{\upmu_V}{4},\rho\right)
\subset\Upsilon(s_*,\rho)$ to prove
\eqref{eq:var-fixed-well}.

For regular $\kappa\in\left(0,\frac{\upmu_V}{4}\right)$, define
\begin{equation}\label{eq:target-annulus-definition}
 A_i(\kappa,s_*):=
 \{x\in B_\rho:\kappa<\abs{u(x)-\upsigma_i}<s_*\}.
\end{equation}
Testing
$-\Delta u+a\eps^{-2}\nabla V(u)=0$ on $A_i(\kappa,s_*)$ with
$\frac{u-\upsigma_i}{\abs{u-\upsigma_i}}$ yields
\begin{equation}\label{eq:target-annulus-flux}
\begin{aligned}
 &\int_{\Gamma_i(s_*,\rho)}
 \partial_{n_i}\abs{u-\upsigma_i}\,\dd\cH^1
 -\int_{\Gamma_i(\kappa,\rho)}
 \partial_{n_i}\abs{u-\upsigma_i}\,\dd\cH^1
 \\
 &=\int_{A_i(\kappa,s_*)}
 \frac{\abs{\nabla u}^2
 -\abs{\nabla\abs{u-\upsigma_i}}^2}{\abs{u-\upsigma_i}}\,\dd x
 \\
 &\quad+\int_{A_i(\kappa,s_*)}\frac{a}{\eps^2}\nabla V(u)\cdot
 \frac{u-\upsigma_i}{\abs{u-\upsigma_i}}\,\dd x
 \geq0.
\end{aligned}
\end{equation}
Here $A_i(\kappa,s_*)\cap\partial B_\rho=\varnothing$: for $i=1$ this
follows from $\abs{u-\upsigma_1}<\kappa$ on $\partial B_\rho$, while for
$i\ne1$ it follows from
$\abs{\upsigma_i-\upsigma_1}>2\upmu_V$.  Consequently,
\[
 \eps\sum_i\int_{\Gamma_i(\kappa,\rho)}
 \partial_{n_i}\abs{u-\upsigma_i}\dd\cH^1
 \leq C_M\mathbb V(\rho).
\]
The estimate for arbitrary $\kappa$ follows by taking regular values
$\kappa_m\to\kappa$ and using lower semicontinuity.
Insert this estimate in \eqref{eq:var-Qbdry} at $s=\kappa$ to obtain
\eqref{eq:var-refined-well}.  Finally,
$B_\rho=\Upsilon\left(\frac{\upmu_V}{4},\rho\right)\cup\Xi$; combine
\eqref{eq:var-fixed-well} and \eqref{eq:var-far} to prove
\eqref{eq:var-full-by-P}.
\end{proof}

\subsection{Pohozaev absorption and energy decay}

The variable-coefficient Pohozaev identity absorbs the coefficient error and
closes the planar energy-decay estimate.

\begin{lem}
\label{lem:var-pohozaev}
For almost every $0<r<1$,
\begin{equation}\label{eq:var-pohozaev}
\int_{B_r}(2a+x\cdot\nabla a)\frac{V(u)}{\eps}
=r\int_{\partial B_r}
\left\{\frac{\eps}{2}(\abs{\nabla_\tau u}^2-\abs{\partial_ru}^2)
+\frac{a}{\eps}V(u)\right\}.
\end{equation}
More generally, for almost every admissible radius of a ball $B_r(x_0)$ with
$\overline{B_r(x_0)}\subset B_1$,
\begin{equation}\label{eq:var-P-absorb}
\left(2-\frac{r\norm{\nabla a}_{L^\infty}}{a_-}\right)
\mathbb V_{\eps,a}(u;B_r(x_0))
\le r\int_{\partial B_r(x_0)}e_{\eps,a}(u).
\end{equation}
Thus, if $\frac{r\norm{\nabla a}_{L^\infty}}{a_-}\leq\frac{1}{2}$,
\begin{equation}\label{eq:var-P-boundary}
\mathbb V_{\eps,a}(u;B_r(x_0))
\le \frac{2r}{3}\int_{\partial B_r(x_0)}e_{\eps,a}(u).
\end{equation}
\end{lem}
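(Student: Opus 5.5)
The plan is a Pohozaev computation for the weighted equation \eqref{eq:var-pde}, using the radial field $X(x):=x-x_0$, followed by an absorption of the coefficient error. Translate so that $x_0=0$. By the range bound (Lemma~\ref{lem:var-far}) and elliptic regularity, $u$ is smooth enough for the classical identities. The weighted stress tensor $T_{\eps,a}=e_{\eps,a}(u)I_2-\eps(\nabla u)^T\nabla u$ then satisfies the forced stationary identity \eqref{eq:architecture-forced-stationary-identity}, namely $\diver T_{\eps,a}=\frac{V(u)}{\eps}\nabla a$. To verify this, differentiate $e_{\eps,a}$: the term $\frac{a}{\eps}\nabla V(u)\cdot\partial_iu$ cancels against $-\eps\Delta u\cdot\partial_iu$ by the equation. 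What survives is $\frac{V(u)}{\eps}\partial_ia$.

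Next contract this identity with $X=x$ and integrate over $B_r$. Since $DX=I_2$, we have $T_{\eps,a}:DX=\tr T_{\eps,a}=2\frac{a}{\eps}V(u)$. This uses the planar identity that $e_{\eps,a}$ times $2$, minus $\eps\abs{\nabla u}^2$, leaves twice the potential term. The divergence theorem then gives
\[
\int_{B_r}\Big(2a\frac{V(u)}{\eps}+x\cdot\nabla a\,\frac{V(u)}{\eps}\Big)\dd x
=\int_{\partial B_r}(T_{\eps,a}x)\cdot\frac{x}{r}\,\dd\cH^1 .
\]
On $\partial B_r$ we have $x=r\,e_r$, so the integrand equals $r\big(e_{\eps,a}-\eps\abs{\partial_ru}^2\big)$. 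This is $r\big(\frac{\eps}{2}(\abs{\nabla_\tau u}^2-\abs{\partial_ru}^2)+\frac{a}{\eps}V(u)\big)$, which is \eqref{eq:var-pohozaev}. The identity holds at every $r$ for which the traces are defined, hence for a.e.\ $r$ when $u$ is only $H^1$. If one wishes to avoid regularity issues, one can instead integrate the weak identity against $\phi(\abs{x})x$ with cutoffs $\phi$ and differentiate in $r$.

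For \eqref{eq:var-P-absorb}, the same computation centred at $x_0$ has $x-x_0$ in place of $x$. On the left, $\abs{(x-x_0)\cdot\nabla a}\leq r\norm{\nabla a}_{L^\infty}\leq \frac{r\norm{\nabla a}_{L^\infty}}{a_-}\,a$. Hence the left side is at least $\big(2-\frac{r\norm{\nabla a}_{L^\infty}}{a_-}\big)\mathbb V_{\eps,a}(u;B_r(x_0))$. On the right, drop the nonpositive term $-\frac{\eps}{2}\abs{\partial_ru}^2$ and bound $\frac{\eps}{2}\abs{\nabla_\tau u}^2+\frac{a}{\eps}V(u)\leq e_{\eps,a}(u)$. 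Finally, \eqref{eq:var-P-boundary} follows by inserting $\frac{r\norm{\nabla a}}{a_-}\leq\frac12$, since the prefactor is then at least $\frac32$.

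No step is a real obstacle. The only care point is justifying the boundary terms for a.e.\ radius, which follows from Fubini/coarea because $e_{\eps,a}(u)\in L^1$ and $u\in C^{2,\alpha}$ locally.
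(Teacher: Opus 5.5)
Your proposal is correct and is essentially the paper's argument: you test the forced stationary identity $\diver T_{\eps,a}=\frac{V(u)}{\eps}\nabla a$ with the radial field $x-x_0$ and use the planar trace identity $\tr T_{\eps,a}=\frac{2a}{\eps}V(u)$. You then absorb the $(x-x_0)\cdot\nabla a$ error via $a\geq a_-$ and bound the boundary integrand by $e_{\eps,a}(u)$. The only small imprecision is that Lemma~\ref{lem:var-far} bounds $u$ only on $B_{\frac{7}{8}}$, so for radii near $1$ you should invoke local boundedness on compact subsets of $B_1$ (same subharmonicity argument) or the cutoff/weak-form route you already mention.
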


\noindent\emph{Bethuel correspondence.}
For $a\equiv1$, this is Bethuel's disk Pohozaev identity
\cite[Lemma 3.8]{Bethuel2025}.  The term
$(x-x_0)\cdot\nabla a$ is the sole new contribution and is absorbed by the
potential term on the left of \eqref{eq:var-P-absorb}.

\begin{proof}
Set
\[
A_{ij}=e_{\eps,a}(u)\delta_{ij}
-\eps\partial_i u\cdot\partial_j u.
\]
The equation yields, in distributions,
\[
\partial_jA_{ij}=\frac{V(u)}{\eps}\partial_i a.
\]
Let $n_r(x):=\frac{x}{r}$ on $\partial B_r$.  Test with the radial field
$X(x):=x$, using radial cutoffs converging to $\mathbf1_{B_r}X$.
In dimension two,
$\operatorname{tr}A=\frac{2aV}{\eps}$, while on $\partial B_r$
\[
(A n_r)\cdot x
=r\left\{\frac{\eps}{2}(\abs{\nabla_\tau u}^2-\abs{\partial_ru}^2)
+\frac{a}{\eps}V(u)\right\}.
\]
This proves \eqref{eq:var-pohozaev}; Lipschitz regularity of $a$ is
enough by approximation or directly from the distributional identity.
Since
\[
\abs{\int_{B_r(x_0)}\frac{V}{\eps}(x-x_0)\cdot\nabla a}
\leq \frac{r\norm{\nabla a}_{L^\infty}}{a_-}\mathbb V_{\eps,a}(B_r(x_0)),
\]
and the absolute value of the boundary integrand is at most
$e_{\eps,a}$, \eqref{eq:var-P-absorb} follows.  Notice that the
coefficient error has been compared with the same potential term on the
left and absorbed.  No term $+\delta E$ remains.
\end{proof}

\begin{lem}\label{lem:var-planar-separation}
Let $w$ be $C^1$ on $\overline{B_\rho}$, let $s>0$ be a regular value,
and suppose $w<s$ on $\partial B_\rho$.  If
$\partial B_r\subset\{w>s\}$ for some $r<\rho$, then
$\{w=s\}$ contains a closed component with non-zero winding number about
the origin, lying outside $B_r$.  In particular its length is at least
$2\pi r$.
\end{lem}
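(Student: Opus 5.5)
The plan is to prove Lemma~\ref{lem:var-planar-separation} as a purely topological statement about level sets of a $C^1$ function in the plane. Since $s$ is a regular value of $w$ on $\overline{B_\rho}$ and $w<s$ on $\partial B_\rho$, the level set $\{w=s\}$ is a compact one-dimensional $C^1$ submanifold of the open disk $B_\rho$. It stays at positive distance from $\partial B_\rho$, and by compactness and the classification of compact $1$-manifolds it is a finite disjoint union of embedded closed $C^1$ curves $\gamma_1,\dots,\gamma_N$. Each $\gamma_m$ is a Jordan curve; write $D_m$ for its bounded complementary component. For a Jordan curve, the winding number about a point not on the curve is $\pm1$ inside and $0$ outside. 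So it suffices to find one $m$ with $0\in D_m$ and $\gamma_m\cap\overline{B_r}=\varnothing$.

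To find it, let $K$ be the connected component of the open set $\{w>s\}\cap B_\rho$ that contains the connected circle $\partial B_r$. Since $w<s$ near $\partial B_\rho$, the closure $\overline K$ is a compact subset of $B_\rho$. Its topological boundary lies in $\{w=s\}$ and is therefore a union of some of the curves $\gamma_m$.
\begin{itemize}
\item[] Let $U$ be the unbounded component of $\R^2\backslash\overline K$. Its boundary is contained in $\partial K$, hence in $\bigcup_m\gamma_m$.
\item[] Because $U$ is connected and each $\gamma_m$ separates the plane, the boundary of $U$ is a single curve $\gamma_{m_0}$, namely the outermost boundary curve of $K$. Then $\R^2\backslash U=\overline{D_{m_0}}$, which contains $\overline K\supset\partial B_r$.
\end{itemize}
Since $\overline{D_{m_0}}$ is simply connected and contains the circle $\partial B_r$, it contains the whole disk $\overline{B_r}$. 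Moreover $\gamma_{m_0}\subset\{w=s\}$ and $\partial B_r\subset\{w>s\}$, so $\gamma_{m_0}\cap\partial B_r=\varnothing$. Hence $\gamma_{m_0}$ lies in the open set $\R^2\backslash\overline{B_r}$ and surrounds $0$, which gives winding number $\pm1$ about the origin.

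For the length bound, compose $\gamma_{m_0}$ with the $1$-Lipschitz radial projection $x\mapsto rx/\abs x$ onto $\partial B_r$. This is valid because $\abs x>r$ on the curve, so the projection does not increase length. The projected closed curve has the same nonzero winding number, so it covers $\partial B_r$ at least once. Therefore the length of $\gamma_{m_0}$ is at least $2\pi r$.

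The main obstacle is the second step: rigorously showing that the outer boundary of $K$ is a single regular component enclosing $B_r$. I would handle it with the Jordan curve theorem applied to the finitely many disjoint curves, ordered by inclusion of their interiors. Among the curves whose interior contains $\partial B_r$, take one maximal for this inclusion. If the component-of-$K$ argument becomes messy, a fallback is available: any path from $\partial B_r$ to $\partial B_\rho$ must cross $\{w=s\}$. A mod-$2$ intersection-number argument then shows that the sum of the winding numbers of the $\gamma_m$ about $0$, taken over curves disjoint from $\overline{B_r}$, is odd, so at least one of these curves has nonzero winding number.
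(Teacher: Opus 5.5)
Your proposal is correct and follows essentially the paper's route. You take the component of $\{w>s\}$ containing $\partial B_r$, extract from its outer boundary a level curve with non-zero winding number about the origin, and project radially onto $\partial B_r$ to get the length bound. The one step you flag does need more than you wrote. The claim that $\partial U$ is a single curve must use connectedness of $K$: it puts $K$ on the inner side of every curve of $\partial U$, and disjoint Jordan curves are either nested or mutually exterior. Connectedness of $U$ alone does not suffice, as the complement of two disjoint discs shows. Your mod-$2$ crossing count along a generic ray from $\partial B_r$ to $\partial B_\rho$ closes this step rigorously and is a precise form of the paper's remark that otherwise the level curves would not separate the two circles.
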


\begin{proof}
The component of $\{w>s\}$ containing $\partial B_r$ is separated from
$\partial B_\rho$.  By regularity, its outer boundary is a finite union of
closed $C^1$ level curves.  At least one has non-zero winding number about
the origin; otherwise their union would not separate the two circles.
On $\R^2\backslash B_r$, angular projection to $\partial B_r$ is
$1$-Lipschitz, and a non-zero-winding loop covers that circle.  Its length
is therefore at least $2\pi r$.
\end{proof}

\begin{prop}[The $\frac{3}{2}$-decay]\label{prop:var-decay}
Under the assumptions of Theorem~\ref{thm:var-clearing-out},
\[
\cE_{\eps,a}\left(u;B_{\frac{1}{2}}\right)
\le C\{\cE_{\eps,a}(u;B_1)^{\frac{3}{2}}
+\eps \cE_{\eps,a}(u;B_1)\}.
\]
\end{prop}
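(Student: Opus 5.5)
We follow Bethuel's proof of \cite[Proposition~1.12]{Bethuel2025}, with Lemma~\ref{lem:var-pohozaev} replacing the constant-coefficient Pohozaev identity. Set $E:=\cE_{\eps,a}(u;B_1)$. If $E\geq h_0/4$, where $h_0$ is the constant of Lemma~\ref{lem:var-circle}, the estimate is trivial because $\cE_{\eps,a}(u;B_{\frac12})\leq E\leq C E^{\frac32}$. We therefore assume $E$ small.

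\emph{Step 1: a good radius.} Apply the second assertion of Lemma~\ref{lem:var-circle} on the annulus $B_{\frac{3}{4}}\backslash B_{\frac{9}{16}}$. This gives $\rho\in[\frac{9}{16},\frac34]$ with $O(\rho)\leq CE\leq h_0$, and we may also take $\rho$ admissible for Lemma~\ref{lem:var-pohozaev}. The first assertion of Lemma~\ref{lem:var-circle} then yields a well $\upsigma_1$ with
\[
\sup_{\partial B_\rho}\abs{u-\upsigma_1}\leq\kappa:=C_{\rm cir}\sqrt{O(\rho)}\leq C\sqrt E<\frac{\upmu_V}{4}.
\]

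\emph{Step 2: Pohozaev bound on the potential.} Since $\norm{\nabla a}_{L^\infty}\leq\delta_0=\frac{a_-}{2}$ and $\rho<1$, we have $\frac{\rho\norm{\nabla a}_{L^\infty}}{a_-}\leq\frac12$. Hence \eqref{eq:var-P-boundary} gives $\mathbb V(\rho)\leq\frac{2\rho}{3}O(\rho)\leq CE$. This bound alone is only linear in $E$.

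\emph{Step 3: gaining the factor $E^{1/2}$.} We split $B_\rho$ into three regions: $\Upsilon(\kappa,\rho)$, the transition region $\{\kappa\leq\abs{u-\upsigma_i}<\frac{\upmu_V}{4}\}$, and the far set $\Xi$.
\begin{itemize}
\item On $\Upsilon(\kappa,\rho)$, \eqref{eq:var-refined-well} gives
\[
\cE_{\eps,a}(u;\Upsilon(\kappa,\rho))\leq C[\kappa\,\mathbb V(\rho)+\eps O(\rho)]\leq C[E^{\frac32}+\eps E].
\]
\item For the transition region and $\Xi$, we first show that their intersection with $B_{\frac12}$ has energy $O(E^{\frac32}+\eps E)$.
\end{itemize}

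The key planar input is Lemma~\ref{lem:var-planar-separation}, applied to $w=\abs{u-\upsigma_1}$ (or to $\chi_1\circ u$, to stay $C^1$). Suppose a circle $\partial B_r$ with $r\in[\frac12,\rho)$ lies in $\{w>s\}$ for a regular value $s\in(\kappa,\frac{\upmu_V}{4})$. Then the level set $\{w=s\}$ has length at least $2\pi r\geq\pi$. By coarea and \eqref{eq:var-MM}, summed over a range of such $s$, this forces the energy in the transition region to be bounded below by a fixed constant. That contradicts smallness of $E$.

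\emph{Step 4: reduction to $\Xi\cap B_{\frac12}$.} By Step 3, every circle $\partial B_r$ with $r\in[\frac12,\rho)$ meets $\{w\leq s\}$ for such $s$. On a good circle, the circle oscillation estimate \eqref{eq:circle-oscillation} then forces $u$ to stay near $\upsigma_1$ along that whole circle. Good circles have $O(r)\lesssim E$ and exist by coarea. It follows that the sets where $u$ is away from $\upsigma_1$ inside $B_{\frac12}$ are enclosed by level curves that do not wind around the origin. Equivalently, we use the refined estimate to bound the transition-region energy by $\kappa\mathbb V+\eps O$ as well. This is done by applying the flux monotonicity \eqref{eq:target-annulus-flux} between the levels $\kappa$ and $s_*$, exactly as in Lemma~\ref{lem:var-well-energy}. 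On $\Xi$, \eqref{eq:var-far} bounds the energy by $C\,\mathbb V(\Xi)$.

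\emph{Step 5: the main obstacle.} It remains to show that the potential on $\Xi\cap B_{\frac12}$ is $O(E^{\frac32}+\eps E)$ rather than just $O(E)$. Our first attempt is to rerun Step 2 on many small balls. We cover $\Xi\cap B_{\frac12}$ by balls $B_r(x_0)$ whose boundary energy controls their potential via \eqref{eq:var-P-absorb}, and use that the total length of the relevant level curves is bounded by $CE$ through coarea. An isoperimetric argument for the enclosed regions then bounds their area, and hence their potential, by the square of that length divided by $\eps$, which is of order $E^2/\eps$. This is where Bethuel's argument is delicate. The variable coefficient enters only through \eqref{eq:var-P-absorb}, where $r\norm{\nabla a}/a_-\leq\frac12$ keeps the absorption uniform, so we expect his bookkeeping to transfer verbatim with constants depending on $a_\pm$.
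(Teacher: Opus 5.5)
Your proposal has a genuine gap, and you essentially concede it in Step~5. You never show that the potential on $\Xi\cap B_{\frac{1}{2}}$ is $O(E^{\frac{3}{2}}+\eps E)$. The isoperimetric attempt gives $E^2/\eps$, which is worse than $E^{\frac{3}{2}}$ whenever $\eps\ll E^{\frac{1}{2}}$, and appealing to Bethuel's bookkeeping is not an argument.

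Two earlier steps are also incorrect as stated.
\begin{itemize}
\item In Step~4, the flux monotonicity \eqref{eq:target-annulus-flux} controls only the flux through the inner level set $\Gamma_i(\kappa,\rho)$. The gain $\kappa\,\mathbb V(\rho)$ on $\Upsilon(\kappa,\rho)$ comes from the prefactor $s=\kappa$ in the first boundary term of \eqref{eq:var-Qbdry}. For the transition region $\{\kappa\le\abs{u-\upsigma_i}<\frac{\upmu_V}{4}\}$ the same bookkeeping gives only the linear bound $C[\mathbb V(\rho)+\eps O(\rho)]$.
\item In Step~3, if a circle lies in $\{w>s\}$, coarea gives an energy lower bound of size $s^2-\kappa^2$. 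This is a fixed constant only when $s$ is of order one. Consequently Step~4 confines your good circles only to an $O(1)$ neighborhood of $\upsigma_1$, not to a $C\sqrt E$ one, and their energy is merely $O(E)$.
\end{itemize}

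The missing idea is that $\Xi$ and the transition region never need separate treatment. Estimate \eqref{eq:var-P-boundary} bounds \emph{all} of the potential in a disk by the energy on its boundary circle. So it suffices to find one circle lying entirely in a $C\sqrt E$-neighborhood of the well, whose energy is then superlinear. The paper does this as follows.
\begin{itemize}
\item Take $\rho\in[\frac{3}{4},\frac{4}{5}]$ with $O(\rho)\le 20E$, and let $\kappa$ be a large numerical multiple of $\sqrt E$ with $\abs{u-\upsigma}\le\frac{\kappa}{4}$ on $\partial B_\rho$.
\item Coarea for $w=\chi_\upsigma\circ u$ over target values in $[\frac{\kappa}{2},\kappa]$ gives a regular $s_0$ with $\cH^1(\{w=s_0\}\cap B_\rho)\le CE/\kappa^2\le\frac{1}{32}$.
\item By Lemma~\ref{lem:var-planar-separation}, no circle $\partial B_r$ with $r\in[\frac{5}{8},\rho]$ lies in $\{w>s_0\}$. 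Hence any circle with $\sup_{\partial B_r}w>\kappa$ must cross this short level set, and radial projection shows such radii have total measure at most $\frac{1}{32}$.
\item On the remaining radii the whole circle lies in $\{\abs{u-\upsigma}\le\kappa\}$. Fubini and \eqref{eq:var-refined-well} then give $\tau$ with $O(\tau)\le C(E^{\frac{3}{2}}+\eps E)$.
\item Estimate \eqref{eq:var-P-boundary} yields $\mathbb V(B_\tau)\le C(E^{\frac{3}{2}}+\eps E)$. Finally \eqref{eq:var-full-by-P}, applied at a good radius $r_*\in[\frac{9}{16},\frac{5}{8}]$, closes the estimate.
\end{itemize}

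Your Steps~3--4 could be repaired in the same spirit.
\begin{itemize}
\item Take $s=C'\sqrt E$ with $C'$ large. Coarea then forces $\pi(s-\kappa)\le CE/\kappa$, which fails, so every circle meets $\{w\le s\}$.
\item The oscillation bound then places your good circles inside $\{\abs{u-\upsigma_1}\le C''\sqrt E\}$.
\item Select one such circle by Fubini and \eqref{eq:var-refined-well}, and apply Pohozaev on it, instead of attacking $\Xi\cap B_{\frac{1}{2}}$ directly.
\end{itemize}
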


\noindent\emph{Bethuel correspondence.}
This is the variable-coefficient version of Bethuel's
Proposition~1.12, whose proof occupies Section~5 of \cite{Bethuel2025}.
The smaller target ball used here causes no loss, and
Lemma~\ref{lem:var-pohozaev} controls the additional coefficient error.

\begin{proof}
Write $\mathcal E=\cE_{\eps,a}(u;B_1)$.  For
$\mathcal E\ge\eta_*$, with $\eta_*>0$ fixed, the estimate is
trivial after taking $C\ge\eta_*^{-\frac{1}{2}}$.  We hence assume
$\mathcal E$ small.

Choose $\rho\in[\frac{3}{4},\frac{4}{5}]$ with
\[
O(\rho)\le20\mathcal E.
\]
Lemma~\ref{lem:var-circle} yields a well $\upsigma$ such that
\[
\sup_{\partial B_\rho}\abs{u-\upsigma}
\le C_{\rm cir}\sqrt{20\mathcal E}.
\]
Set
\[
\kappa=4C_{\rm cir}\sqrt{20\mathcal E}.
\]
Decreasing $\eta_*$ ensures $\kappa<\frac{\upmu_V}{4}$ and
$\abs{u-\upsigma}\leq\frac{\kappa}{4}$ on $\partial B_\rho$.
Lemma~\ref{lem:var-well-energy} yields
\begin{equation}\label{eq:var-U-threehalf}
\cE_{\eps,a}(u;\Upsilon(\kappa,\rho))
\le C(\mathcal E^{\frac{3}{2}}+\eps\mathcal E).
\end{equation}

We next find a full inner circle in this well region.  Apply coarea to
$w=\chi_\upsigma\circ u$ for target values in
$[\frac{\kappa}{2},\kappa]$.  By \eqref{eq:var-MM}, some regular
$s_0\in[\frac{\kappa}{2},\kappa]$ satisfies
\[
\mathcal H^1(\{w=s_0\}\cap B_\rho)
\le C\frac{\mathcal E}{\kappa^2}.
\]
Indeed, on this target interval
$\abs{\nabla w}\leq\frac{\abs{\nabla(w^2)}}{\kappa}$; integrate by coarea and
divide by the interval length $\frac{\kappa}{2}$.
Enlarge the fixed numerical factor in the definition of $\kappa$, if
necessary, so that this length is at most $\frac{1}{32}$.  Since
$w<s_0$ on $\partial B_\rho$, no circle
$\partial B_r$, $r\in[\frac{5}{8},\rho]$, can satisfy $w>s_0$ everywhere:
otherwise a component of $\{w=s_0\}$ would surround $B_r$ and have
length at least $2\pi r>\frac{1}{32}$, by
Lemma~\ref{lem:var-planar-separation}.  If
$\sup_{\partial B_r}w>\kappa$, the circle therefore crosses
$\{w=s_0\}$.  Radial projection is $1$-Lipschitz, hence
\[
\abs{\{r\in[\frac{5}{8},\rho]:
\sup_{\partial B_r}w>\kappa\}}\leq\frac{1}{32}.
\]
Because $\rho\ge\frac{3}{4}$, the complementary set of good radii has length
at least $\frac{3}{32}$.  Fubini and \eqref{eq:var-U-threehalf} yield a good
$\tau\in[\frac{5}{8},\rho]$ such that
\begin{equation}\label{eq:var-inner-circle}
\int_{\partial B_\tau}e_{\eps,a}(u)
\le C(\mathcal E^{\frac{3}{2}}+\eps\mathcal E).
\end{equation}
The whole circle lies in $\{\abs{u-\upsigma}\leq\kappa\}$.
By Lemma~\ref{lem:var-pohozaev},
\begin{equation}\label{eq:var-improved-P}
\mathbb V_{\eps,a}\left(u;B_{\frac{5}{8}}\right)
\le \mathbb V_{\eps,a}(u;B_\tau)
\le C(\mathcal E^{\frac{3}{2}}+\eps\mathcal E).
\end{equation}

Choose $r_*\in[\frac{9}{16},\frac{5}{8}]$ with
\[
\int_{\partial B_{r_*}}e_{\eps,a}(u)
\le16\cE_{\eps,a}\left(u;B_{\frac{5}{8}}\right).
\]
Using \eqref{eq:var-full-by-P} and \eqref{eq:var-improved-P},
\[
\cE_{\eps,a}\left(u;B_{\frac{1}{2}}\right)
\le \cE_{\eps,a}(u;B_{r_*})
\le C\{\mathbb V_{\eps,a}(u;B_{r_*})
+\eps O(r_*)\}
\le C(\mathcal E^{\frac{3}{2}}+\eps\mathcal E).
\]
\end{proof}

\subsection{Corrected dyadic clearing-out}

We now iterate the decay estimate across dyadic scales and derive the
clearing-out consequences used for limiting measures.

For $u_R(y)=u(x_0+Ry)$, $a_R(y)=a(x_0+Ry)$, and
$\eps_R=\frac{\eps}{R}$,
\begin{equation}\label{eq:var-scale}
\cE_{\eps_R,a_R}(u_R;B_1)
=R^{-1}\cE_{\eps,a}(u;B_R(x_0)),\quad
\norm{\nabla a_R}_{L^\infty}
\leq R\norm{\nabla a}_{L^\infty}.
\end{equation}
Thus the coefficient class improves under blow-up.
Proposition~\ref{prop:var-decay} scales to
\begin{equation}\label{eq:var-scaled-decay}
\cE_{\eps,a}\left(u;B_{\frac{R}{2}}(x_0)\right)
\le C\left\{
\frac{\cE_{\eps,a}(u;B_R(x_0))^{\frac{3}{2}}}{\sqrt R}
+\frac{\eps}{R}\cE_{\eps,a}(u;B_R(x_0))\right\},
\end{equation}
provided $R\ge\eps$ and
$\frac{R\operatorname{Lip}(a)}{a_-}\le\frac{1}{2}$.

\Needspace{6\baselineskip}
\par\addvspace{\medskipamount}
\begin{lem}\label{lem:var-weak}
There is $\eta_{\rm w}=\eta_{\rm w}(V,a_-,a_+)>0$ such that, for
$0<\eps\le4$,
\[
\cE_{\eps,a}(u;B_1)\le\eta_{\rm w}\eps
\quad\Rightarrow\quad
u\left(B_{\frac{3}{4}}\right)\subset B\left(\upsigma,\frac{\upmu_V}{2}\right)
\]
for one $\upsigma\in\Sigma$.
\end{lem}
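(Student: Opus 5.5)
The plan is to rescale to the natural length $\eps$ and combine regularity with a range bound.

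\emph{Step 1: rescale.} Set $\tilde u(y):=u(\eps y)$ and $\tilde a(y):=a(\eps y)$ on $B_{1/\eps}$. Then $\tilde u$ solves $-\Delta\tilde u+\tilde a\nabla V(\tilde u)=0$, and
\[
 \int_{B_{1/\eps}}\Bigl(\tfrac12\abs{\nabla\tilde u}^2+\tilde aV(\tilde u)\Bigr)\dd y=\eps^{-1}\cE_{\eps,a}(u;B_1)\le\eta_{\rm w}.
\]
Since $\eps\le4$, the domain $B_{1/\eps}$ contains $B_{1/4}$ at least. Every point $y_0$ with $\abs{y_0}\le\frac{3}{4\eps}$ therefore has a ball $B_{1/(8\eps)}(y_0)\subset B_{1/\eps}$. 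Its radius $r_0:=\frac{1}{8\eps}$ is at least $\frac1{32}$ and is bounded below uniformly.

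\emph{Step 2: range and gradient bounds.} Since the energy is at most $1$, the argument of Lemma~\ref{lem:var-far} gives $\abs{\tilde u}\le C$ on the relevant ball, with $C$ depending only on $V,a_\pm$. It runs the subharmonicity of $(\abs{\tilde u}^2-C_1/c_1)_+$ together with the $L^2$ bound from the coercivity of $V$, now at unit scale. The equation then has a bounded right-hand side. Interior $W^{2,p}$ estimates on $B_{r_0}(y_0)$ with radius $r_0\ge\frac1{32}$, followed by Sobolev embedding, give $\abs{\nabla\tilde u}\le L$ on $B_{r_0/2}(y_0)$. Here $L=L(V,a_-,a_+)$; note that $\tilde a$ is bounded but need not be controlled in $W^{1,\infty}$, and this is harmless.

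\emph{Step 3: pointwise smallness of $V$.} Suppose $V(\tilde u(y_0))=\delta$. For $\abs{y-y_0}\le\min\{r_0/2,\delta/(2L\,\mathrm{Lip}(V|_{B_C}))\}$ we have $V(\tilde u(y))\ge\delta/2$. Integrating,
\[
 \eta_{\rm w}\ge a_-\tfrac{\delta}{2}\,\pi\min\{c,c'\delta\}^2 .
\]
Hence $\delta\le\omega(\eta_{\rm w})$ with $\omega(t)\to0$ as $t\to0$. Choosing $\eta_{\rm w}$ small therefore forces $\dist(\tilde u(y),\Sigma)<\upmu_V/4$ for all $\abs y\le\frac{3}{4\eps}$, using that the wells are non-degenerate and $\abs{\tilde u}\le C$.

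\emph{Step 4: one well.} It remains to show that a single well is selected. The image $\tilde u(B_{3/(4\eps)})$ is connected and lies in the disjoint union of the balls $B(\upsigma_i,\upmu_V/4)$, which are separated because the balls $B(\upsigma_i,2\upmu_V)$ are disjoint. So it lies in one such ball, which is contained in $B(\upsigma,\upmu_V/2)$. Undoing the scaling proves the claim.

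The main obstacle is keeping every constant uniform when $\eps$ is close to $4$, where the rescaled domain is small. This is why the radius $r_0=\frac{1}{8\eps}$ is kept bounded below by $\frac1{32}$ rather than $1$; all elliptic estimates are applied at that fixed lower scale.
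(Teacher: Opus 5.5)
Your proof is correct and follows the paper's route. The paper gets a range bound and a scale-$\eps$ gradient bound, uses them to bound $V\circ u$ from below on a ball of radius comparable to $\eps$ around any point where $u$ is far from $\Sigma$ (contradicting $\cE_{\eps,a}(u;B_1)\le\eta_{\rm w}\eps$), and concludes by connectedness; you do the same in $\eps$-rescaled variables, with a modulus $\omega(\eta_{\rm w})$ in place of the paper's fixed threshold $v_0$ and its case split $\eps\le\eps_*$ versus $\eps\in[\eps_*,4]$. One cosmetic fix: when $r_0=\frac{1}{8\eps}$ is large, apply the $W^{2,p}$ and Sobolev step on balls of radius $\min\{r_0,1\}$ centred at each point rather than on $B_{r_0}(y_0)$ itself, so that the gradient constant $L$ stays uniform.
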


\begin{proof}
It is enough to choose $\eta_{\rm w}\leq1$.  The assumed smallness then
implies $\cE_{\eps,a}(u;B_1)\leq4$, so Lemma~\ref{lem:var-far} applies with
$M=4$.  By that lemma and the compactness of
$\{y:\abs y\leq C(4,V)\}$,
\begin{equation}\label{eq:weak-clearing-out-lipschitz}
 \norm{\nabla(V\circ u)}_{L^\infty(B_{\frac{4}{5}})}
 \leq\norm{DV(u)}_{L^\infty(B_{\frac{4}{5}})}
 \norm{\nabla u}_{L^\infty(B_{\frac{4}{5}})}
 \leq\frac{C_1}{\eps}.
\end{equation}
Set
\begin{equation}\label{eq:weak-clearing-out-v0}
 v_0:=\min\left\{V(y):\dist(y,\Sigma)\geq\frac{\upmu_V}{2},
 \ \abs y\leq C(4,V)\right\}>0.
\end{equation}
If $x_0\in B_{\frac{3}{4}}$ and
$\dist(u(x_0),\Sigma)\geq\frac{\upmu_V}{2}$, then
\eqref{eq:weak-clearing-out-lipschitz} implies
\begin{equation}\label{eq:weak-clearing-out-positive-ball}
 V(u(x))\geq\frac{v_0}{2}
 \quad\text{for }x\in B_{r_\eps}(x_0),
 \quad
 r_\eps:=\min\left\{\frac{v_0\eps}{2C_1},\frac{1}{40}\right\}.
\end{equation}
Consequently,
\begin{equation}\label{eq:weak-clearing-out-contradiction}
\begin{aligned}
 \frac{\eps}{a_-}\cE_{\eps,a}(u;B_1)
 &\geq\int_{B_1}V(u)\,\dd x
 \geq\frac{\pi v_0}{2}r_\eps^2.
\end{aligned}
\end{equation}
For $0<\eps\leq\eps_*:=\frac{C_1}{20v_0}$, the right-hand side is
$c_0\eps^2$; for $\eps\in[\eps_*,4]$, it is at least
$c_0\eps_*^2$.  Thus \eqref{eq:weak-clearing-out-contradiction} contradicts
$\cE_{\eps,a}(u;B_1)\leq\eta_{\rm w}\eps$ provided
\begin{equation}\label{eq:weak-clearing-out-eta-choice}
 \eta_{\rm w}<
 \min\left\{a_-c_0,\frac{a_-c_0\eps_*^2}{16}\right\}.
\end{equation}
Hence
\begin{equation}\label{eq:weak-clearing-out-union}
 u\left(B_{\frac{3}{4}}\right)\subset
 \bigcup_{\upsigma\in\Sigma}B\left(\upsigma,\frac{\upmu_V}{2}\right).
\end{equation}
The balls in \eqref{eq:weak-clearing-out-union} are pairwise disjoint and
$B_{\frac{3}{4}}$ is connected; continuity of $u$ therefore proves the assertion.
\end{proof}

\begin{lem}\label{lem:var-center}
Assume the hypotheses of Theorem~\ref{thm:var-clearing-out}, including that
$u$ is a weak solution of \eqref{eq:var-pde},
$\norm{\nabla a}_{L^\infty}\leq\frac{a_-}{2}$, and $0<\eps\leq1$.
There is $\eta_{\rm c}>0$, depending only on
$V,a_-,a_+$, such that
\[
\cE_{\eps,a}(u;B_1)\le\eta_{\rm c}
\quad\Rightarrow\quad
\dist(u(0),\Sigma)\le\frac{\upmu_V}{2}.
\]
\end{lem}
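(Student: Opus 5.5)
We argue by contradiction combined with the dyadic iteration of the scaled decay estimate \eqref{eq:var-scaled-decay}. Suppose $\dist(u(0),\Sigma)>\frac{\upmu_V}{2}$. The goal is to find a scale $R\geq\eps$ (or $R$ comparable to $\eps$) around $0$ at which the energy on $B_R$ satisfies $\cE_{\eps,a}(u;B_R)\leq\eta_{\rm w}R\cdot\frac{\eps}{R}$. In rescaled variables this is $\cE_{\eps_R,a_R}(u_R;B_1)\leq\eta_{\rm w}\eps_R$, so Lemma~\ref{lem:var-weak} forces $u(0)$ to lie within $\frac{\upmu_V}{2}$ of a well, a contradiction. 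The rescaled coefficient $a_R$ stays in the class with $\norm{\nabla a_R}\leq R\norm{\nabla a}\leq\frac{a_-}{2}$ by \eqref{eq:var-scale}, so every constant is uniform.

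The iteration is set up as follows. Put $R_m:=2^{-m}$ and $E_m:=R_m^{-1}\cE_{\eps,a}(u;B_{R_m})$, the normalized energy, so that $E_0\leq\eta_{\rm c}$. For $R_m\geq\eps$, \eqref{eq:var-scaled-decay} gives
\[
 E_{m+1}\leq 2C\left(E_m^{\frac{3}{2}}+\frac{\eps}{R_m}E_m\right).
\]
If $\eta_{\rm c}$ is small enough that $2C\eta_{\rm c}^{1/2}\leq\frac{1}{4}$, induction gives $E_{m+1}\leq\frac{1}{4}E_m+2C\frac{\eps}{R_m}E_m$. While $\frac{\eps}{R_m}\leq\frac{1}{8C}$, the sequence therefore decays geometrically and stays below $\eta_{\rm c}$, so the smallness needed for the induction persists.

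Next, stop at the first index $m_*$ with $R_{m_*}\leq K\eps$, where $K:=8C$ is fixed. Two cases arise.
\begin{itemize}
\item If $m_*=0$, then $\eps\geq 1/K$, and directly $\cE_{\eps,a}(u;B_1)\leq\eta_{\rm c}\leq\eta_{\rm w}/K\leq\eta_{\rm w}\eps$ once $\eta_{\rm c}\leq\eta_{\rm w}/K$. Lemma~\ref{lem:var-weak} applies since $\eps\leq1\leq4$.
\item If $m_*\geq1$, then at $R:=R_{m_*}\in(K\eps/2,K\eps]$ the normalized energy satisfies $E_{m_*}\leq 4^{-m_*}\eta_{\rm c}\leq\eta_{\rm c}$. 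We need $E_{m_*}\leq\eta_{\rm w}\eps_R$ with $\eps_R=\eps/R\geq1/K$, and this again holds if $\eta_{\rm c}\leq\eta_{\rm w}/K$. Also $\eps_R<2/K\leq4$, so Lemma~\ref{lem:var-weak} is applicable to $u_R$ on $B_1$.
\end{itemize}

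The main obstacle is bookkeeping rather than substance. First, the induction must genuinely close using only the decay estimate; a pure $\frac{3}{2}$-power decay suffices once $\eta_{\rm c}$ is small, and the $\eps/R$ term is harmless until the stopping scale. Second, the scaled decay requires $R\geq\eps$ and the Lipschitz condition on $a_R$, both of which hold along the iteration since $R_m\geq K\eps\geq\eps$ before stopping. Third, Lemma~\ref{lem:var-weak} needs the rescaled function to be a continuous weak solution on $B_1$; this holds by elliptic regularity. With these checks, $\dist(u(0),\Sigma)\leq\frac{\upmu_V}{2}$ follows, with $\eta_{\rm c}:=\min\{\eta_{\rm w}/K,(8C)^{-2}\}$, which depends only on $V,a_-,a_+$.
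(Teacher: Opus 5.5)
Your argument is correct, and it follows a different route from the paper's.

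\textbf{The paper's route.} The paper works with the unnormalized energies $E_n=\cE_{\eps,a}(u;B_{2^{-n}})$. It takes the last index $N$ with $r_N\ge\eps$ and $E_N\ge\eps^2/r_N$. In that regime the $\eps$-term of \eqref{eq:var-scaled-decay} is dominated by the first term, which gives the superlinear recurrence $E_{n+1}\le 2C\,2^{n/2}E_n^{3/2}$ and hence double-exponential decay. That decay forces $r_N\gtrsim\abs{\log\eps}^{-\gamma}\gg\eps$. At scale $r_{N+1}$, maximality gives normalized energy below $(\eps/R)^2\le\eta_{\rm w}\,\eps/R$. Lemma~\ref{lem:var-weak} is then applied with a \emph{small} rescaled parameter. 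The case $\eps\ge\eps_0$ is handled separately, and $\eps_0$ must be small enough for the logarithmic comparison $\eps\ll\abs{\log\eps}^{-\gamma}$.

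\textbf{Your route.} You work with the normalized energy and use only that the exponent $\frac32$ exceeds one. Once $E_m\le\eta_{\rm c}$ and $R_m\ge 8C\eps$, both terms on the right are at most $\frac14E_m$. So the normalized energy contracts all the way down to a scale $R\in(K\eps/2,K\eps]$. There you apply weak clearing-out with $\eps_R\in[1/K,2/K)$, which is of order one. Along the way you correctly check $R_m\ge\eps$, $\eps_R\le4$, and the coefficient class $\norm{\nabla a_R}_{L^\infty}\le\frac{a_-}{2}$.

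\textbf{Comparison.} Your version avoids the $\eps_0$ splitting and the logarithmic bookkeeping, and it gives an explicit $\eta_{\rm c}=\min\{\eta_{\rm w}/K,(8C)^{-2}\}$. The paper's version extracts the much stronger double-exponential decay across scales and mirrors Bethuel's dyadic argument. That extra information is not needed for this lemma.

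\textbf{Minor points.} With your choices each step gives $E_{m+1}\le\frac12E_m$, not a factor $\frac14$, so the bound is $E_{m_*}\le2^{-m_*}\eta_{\rm c}$. Since you only use $E_{m_*}\le\eta_{\rm c}$, nothing changes. The contradiction framing is also superfluous: Lemma~\ref{lem:var-weak} applied to $u_R$ yields the conclusion directly, because $u(0)=u_R(0)$.
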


\noindent\emph{Bethuel correspondence.}
This is the perturbed form of Bethuel's dyadic clearing-out argument at the
centre \cite[Proposition 6.5]{Bethuel2025}.

\begin{proof}
Put $r_n=2^{-n}$ and $E_n=\cE_{\eps,a}(u;B_{r_n})$.
Fix $\eps_0>0$ sufficiently small for the logarithmic estimates below and
also $\eps_0\leq\eta_{\rm w}$.  For
$\eps\in[\eps_0,1]$, choose
$\eta_{\rm c}\leq\eta_{\rm w}\eps_0$ and apply weak clearing-out directly at
scale one.  We hence assume $\eps<\eps_0$.  If $E_0<\eps^2$, then
$E_0<\eta_{\rm w}\eps$ and weak clearing-out applies.  Otherwise, let $N$
be the last index such that
\[
r_N\ge\eps,\quad E_N\ge\frac{\eps^2}{r_N}.
\]
The index exists because $n=0$ qualifies.  If it qualifies at $N$, then
both inequalities qualify at every $n\leq N$, since
$E_n\geq E_N$ and
$\frac{\eps^2}{r_n}\leq\frac{\eps^2}{r_N}$.  Thus the recurrence below
can indeed be iterated through $N$.
Whenever these inequalities hold, \eqref{eq:var-scaled-decay} yields
\begin{equation}\label{eq:var-superlinear}
E_{n+1}\le2C\,2^{\frac{n}{2}}E_n^{\frac{3}{2}}.
\end{equation}
Indeed, the second term in \eqref{eq:var-scaled-decay} is then bounded
by the first.

With $A_n=-\log E_n$,
\[
A_{n+1}\ge\frac{3}{2}A_n-\frac{n}{2}\log2-\log(2C).
\]
Let
\[
\Gamma=\sum_{j=0}^\infty
\left(\frac{2}{3}\right)^{j+1}
\left(\frac{j}{2}\log2+\log(2C)\right)<\infty.
\]
Induction yields
\[
A_n\ge\left(\frac{3}{2}\right)^n(A_0-\Gamma).
\]
If $E_0\le e^{-(\Gamma+1)}$, then
\[
E_n\le\exp\left[-\left(\frac{3}{2}\right)^n\right].
\]
At the last admissible index,
\[
\eps^22^N\le E_N\le e^{-\left(\frac{3}{2}\right)^N},
\]
and consequently
\[
\left(\frac{3}{2}\right)^N+N\log2\leq2\abs{\log\eps},
\quad
N\leq\frac{\log(2\abs{\log\eps})}{\log\left(\frac{3}{2}\right)}.
\]
Writing $\gamma=\frac{\log2}{\log\left(\frac{3}{2}\right)}$, this implies
\[
r_N\geq(2\abs{\log\eps})^{-\gamma}.
\]
The correct comparison is
$\eps\ll\abs{\log\eps}^{-\gamma}$.  Hence, for all small $\eps$,
$r_N\ge2\eps$.  Maximality yields
\[
E_{N+1}<\frac{\eps^2}{r_{N+1}}.
\]
At scale $R=r_{N+1}$,
\[
\cE_{\frac{\eps}{R},a_R}(u_R;B_1)
=\frac{E_{N+1}}{R}<\left(\frac{\eps}{R}\right)^2,\quad
\frac{\eps}{R}\leq C\eps\abs{\log\eps}^\gamma.
\]
For small $\eps$, this is at most
$\eta_{\rm w}\left(\frac{\eps}{R}\right)$, and
Lemma~\ref{lem:var-weak} applies.
This proves the claim in the small-$\eps$ case and completes the proof.
\end{proof}

\begin{proof}[Completion of Theorem~\ref{thm:var-clearing-out}]
Proposition~\ref{prop:var-decay} proves \eqref{eq:var-decay}.
For each $x_0\in B_{\frac{3}{4}}$, rescale $B_{\frac{1}{4}}(x_0)$ to $B_1$.
Its normalized energy is at most $4\cE_{\eps,a}(u;B_1)$, and its
coefficient has Lipschitz seminorm at most $\frac{\delta_0}{4}$.
Choose $2\eta_0\le\frac{\eta_{\rm c}}{4}$.
Lemma~\ref{lem:var-center} yields pointwise confinement when $4\eps\le1$;
when $4\eps>1$, decrease $\eta_0$ and use weak clearing-out.
Continuity and separation of the well balls show that the same
$\upsigma$ works on all of $B_{\frac{3}{4}}$, proving
\eqref{eq:var-confine}.

Choose $r\in[\frac{5}{8},\frac{3}{4}]$ with
\[
\int_{\partial B_r}e_{\eps,a}(u)\le8\cE_{\eps,a}(u;B_1).
\]
Multiply \eqref{eq:var-pde} by $u-\upsigma$ on $B_r$:
\[
\int_{B_r}\left\{\eps\abs{\nabla u}^2+
\frac{a}{\eps}DV(u)\cdot(u-\upsigma)\right\}
=\eps\int_{\partial B_r}\partial_ru\cdot(u-\upsigma).
\]
By \eqref{eq:var-well} and confinement, the left side dominates
$c\cE_{\eps,a}(u;B_r)$.  Moreover,
\[
 \abs{u-\upsigma}\leq C\sqrt{V(u)},
 \quad \abs{\nabla u}\sqrt V\leq C e_{\eps,a}.
\]
Thus the boundary term, including its prefactor $\eps$, is at most
$C\eps \cE_{\eps,a}(u;B_1)$.  Since $B_{\frac{5}{8}}\subset B_r$,
\eqref{eq:var-small-inner} follows.
\end{proof}

\begin{lem}
\label{lem:var-uniform-energy-potential}
Assume in addition that $\norm{u}_{L^\infty(B_{\frac{4}{5}})}\leq L$.
There is $C=C(V,a_-,a_+,L)$ such that
\begin{equation}\label{eq:var-uniform-energy-potential}
 \cE_{\eps,a}\left(u;B_{\frac{1}{2}}\right)
 \leq C\left[
 \mathbb V_{\eps,a}\left(u;B_{\frac{3}{4}}\right)
 +\eps\cE_{\eps,a}\left(u;B_1\backslash B_{\frac{1}{2}}\right)
 \right].
\end{equation}
After scaling, suppose that $u$ is a weak solution on $B_r(x_0)$,
$a_-\leq a\leq a_+$ there,
$\norm{u}_{L^\infty(B_{\frac{4r}{5}}(x_0))}\leq L$, and $\eps\leq r$.
Then
\begin{equation}\label{eq:var-uniform-energy-potential-scaled}
 \cE_{\eps,a}\left(u;B_{\frac{r}{2}}(x_0)\right)
 \leq C\left[
 \begin{aligned}
 &
 \mathbb V_{\eps,a}\left(u;B_{\frac{3r}{4}}(x_0)\right)
 \\
 &+\frac{\eps}{r}\cE_{\eps,a}\left(
 u;B_r(x_0)\backslash B_{\frac{r}{2}}(x_0)
 \right)
 \end{aligned}
 \right].
\end{equation}
\end{lem}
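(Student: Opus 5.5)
The plan is to rerun the proof of \eqref{eq:var-full-by-P} in Lemma~\ref{lem:var-well-energy}, with the energy bound $M$ replaced by the range bound $L$, and then to choose the radius $\rho$ by coarea inside the annulus. The point is that $M$ entered Lemma~\ref{lem:var-well-energy} only through Lemma~\ref{lem:var-far}, and that lemma uses $M$ only to produce an $L^\infty$ bound on $u$ and then, from it, a bound $\eps\abs{\nabla u}\leq C$.

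\textbf{Step 1 (far-from-well estimate under a range bound).} First I will show that $\norm{u}_{L^\infty(B_{\frac{4}{5}})}\leq L$ gives
\[
 \eps\norm{\nabla u}_{L^\infty(B_{\frac{3}{4}})}\leq C(V,a_-,a_+,L).
\]
Fix $x\in B_{\frac{3}{4}}$ and set $s:=\min\{\eps,\frac{1}{20}\}$. Rescale $B_s(x)$ to $B_1$ via $\tilde u(y):=u(x+sy)$. Then
\[
 -\Delta\tilde u=\frac{s^2}{\eps^2}\,a(x+s\,\cdot)\,\nabla V(\tilde u),
\]
and the right-hand side is bounded by $a_+\sup_{\abs y\leq L}\abs{\nabla V(y)}$, because $s\leq\eps$ and $\abs{\tilde u}\leq L$. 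Interior $W^{2,p}$ estimates and Sobolev embedding give $\abs{\nabla\tilde u(0)}\leq C(L)$, so $\abs{\nabla u(x)}\leq C(L)/s$. Since $\eps\leq1$, this yields $\eps\abs{\nabla u(x)}\leq C(L)\max\{1,20\eps\}\leq C(L)$. No derivative of $a$ enters.

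On $\Xi\cap B_{\frac{3}{4}}$ we have $\abs u\leq L$ and $\dist(u,\Sigma)\geq\frac{\upmu_V}{4}$, hence $V(u)\geq c(V,L)>0$. Combining this with the gradient bound,
\[
 e_{\eps,a}(u)\leq\frac{C}{\eps}\leq C(L)\,\frac{a}{\eps}V(u),
\]
which is \eqref{eq:var-far} with $M$ replaced by $L$.

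\textbf{Step 2 (the well-energy estimate with $L$-dependent constants).} I will check that the proof of Lemma~\ref{lem:var-well-energy} goes through verbatim for $\rho\in[\frac{9}{16},\frac{3}{4}]$ with $C_M$ replaced by $C(V,a_-,a_+,L)$.
\begin{itemize}
\item The multiplier identity \eqref{eq:var-Qbdry} uses only \eqref{eq:var-well}.
\item The shell bound $\cE_{\eps,a}(u;\mathcal S)\leq C\,\mathbb V(\rho)$ and the choice of $s_*$ use only the estimate of Step~1.
\item The decomposition $B_\rho=\Upsilon(\frac{\upmu_V}{4},\rho)\cup\Xi$ also uses only Step~1.
\end{itemize}
This gives, for every such $\rho$,
\[
 \cE_{\eps,a}(u;B_\rho)\leq C\bigl[\mathbb V_{\eps,a}(u;B_\rho)+\eps O(\rho)\bigr].
\]

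\textbf{Step 3 (choice of radius and scaling).} By the coarea selection in Lemma~\ref{lem:var-circle}, applied on $[\frac{9}{16},\frac{3}{4}]$, there is $\rho$ in this interval with
\[
 O(\rho)\leq\frac{16}{3}\,\cE_{\eps,a}\bigl(u;B_{\frac{3}{4}}\backslash B_{\frac{9}{16}}\bigr)\leq\frac{16}{3}\,\cE_{\eps,a}\bigl(u;B_1\backslash B_{\frac{1}{2}}\bigr).
\]
Since $B_{\frac{1}{2}}\subset B_\rho\subset B_{\frac{3}{4}}$, Step~2 yields \eqref{eq:var-uniform-energy-potential}.

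For \eqref{eq:var-uniform-energy-potential-scaled}, apply the unit-scale result to $u_r(y):=u(x_0+ry)$, with $a_r(y):=a(x_0+ry)$ and $\eps_r:=\frac{\eps}{r}\leq1$. Energy and potential both scale by the factor $r^{-1}$, exactly as in \eqref{eq:var-scale}. The only property of the coefficient used is $a_-\leq a_r\leq a_+$; no smallness of $\nabla a$ is needed, because no derivative of $a$ appears anywhere in Steps~1--3. Multiplying the unit-scale inequality through by $r$ gives the factor $\frac{\eps}{r}$ in front of the annulus term.

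\textbf{Main obstacle.} The delicate point is Step~1: the constant must depend on $L$ alone and not on any energy bound, and the argument must be uniform in the regime where $\eps$ is comparable to the distance to $\partial B_{\frac{4}{5}}$. Taking $s=\min\{\eps,\frac{1}{20}\}$ handles this, at the cost of using $\eps\leq1$ in the unit-scale statement.
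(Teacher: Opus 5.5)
Your proposal is correct and follows the paper's proof essentially step for step. The paper uses the same coarea choice of $\rho\in[\frac{9}{16},\frac{3}{4}]$ and the same key observation: the multiplier argument behind \eqref{eq:var-full-by-P} needs only $\eps\norm{\nabla u}_{L^\infty(B_{\frac{3}{4}})}\leq C(V,a_+,L)$, which follows from the range bound and a rescaled interior elliptic estimate, and it finishes with the same scaling. Your Step~1 just spells that estimate out; the sign in your rescaled equation should be $-\Delta\tilde u=-\frac{s^2}{\eps^2}a\nabla V(\tilde u)$, but this does not affect the bound.
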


\noindent\emph{Bethuel correspondence.}
For $a\equiv1$, estimate~\eqref{eq:var-uniform-energy-potential} is
Bethuel's Proposition~4.11, and
\eqref{eq:var-uniform-energy-potential-scaled} is its scaled
form~\cite[formula (4.40)]{Bethuel2025}.
The bounds $a_-\leq a\leq a_+$ only change the constant.

\begin{proof}

By the coarea identity, there exists
$\rho\in[\frac{9}{16},\frac{3}{4}]$ such that
\begin{equation}\label{eq:energy-potential-good-radius}
 O(\rho)
 \leq\frac{16}{3}
 \cE_{\eps,a}\left(u;B_{\frac{3}{4}}\backslash B_{\frac{9}{16}}\right)
 \leq\frac{16}{3}
 \cE_{\eps,a}\left(u;B_1\backslash B_{\frac{1}{2}}\right).
\end{equation}
The target-space multiplier argument proving
\eqref{eq:var-full-by-P} applies with a constant depending on
$V,a_-,a_+$ and $L$.  Indeed, the assumed $L^\infty$ bound and the
rescaled interior elliptic estimate provide
\[
 \eps\norm{\nabla u}_{L^\infty(B_{\frac{3}{4}})}\leq C(V,a_+,L).
\]
The remainder of that argument uses only \eqref{eq:var-well} and
$a_-\leq a\leq a_+$.  Hence
\begin{equation}\label{eq:energy-potential-radius-estimate}
 \cE_{\eps,a}(u;B_\rho)
 \leq C\left[
 \mathbb V_{\eps,a}(u;B_\rho)+\eps O(\rho)
 \right].
\end{equation}
Since $B_{\frac{1}{2}}\subset B_\rho\subset B_{\frac{3}{4}}$,
\eqref{eq:var-uniform-energy-potential} follows from
\eqref{eq:energy-potential-good-radius}--
\eqref{eq:energy-potential-radius-estimate}.

For the scaled statement, set
\[
 u_r(y):=u(x_0+ry),\quad
 a_r(y):=a(x_0+ry),\quad
 \eps_r:=\frac{\eps}{r}.
\]
Apply \eqref{eq:var-uniform-energy-potential} to
$(u_r,a_r,\eps_r)$ and use the scaling identities
\[
 \cE_{\eps_r,a_r}(u_r;U)
 =\frac{1}{r}\cE_{\eps,a}(u;x_0+rU),
 \quad
 \mathbb V_{\eps_r,a_r}(u_r;U)
 =\frac{1}{r}\mathbb V_{\eps,a}(u;x_0+rU).
\]
This proves \eqref{eq:var-uniform-energy-potential-scaled}.
\end{proof}

\begin{cor}
\label{cor:var-measure-package}
Fix $x_0\in\R^2$ and $R>0$.  Let $a_j\in W^{1,\infty}(B_R(x_0))$ satisfy
\[
 a_-\leq a_j\leq a_+,
 \quad
 \frac{R\norm{\nabla a_j}_{L^\infty(B_R(x_0))}}{a_-}\leq\frac{1}{2}.
\]
Let $\frac{\eps_j}{R}\to0$, and let $u_j\in H^1(B_R(x_0);\R^k)$ be weak
solutions of
\[
 -\eps_j\Delta u_j+\frac{a_j}{\eps_j}\nabla V(u_j)=0
 \quad\text{in }B_R(x_0),
\]
with $\norm{u_j}_{L^\infty(B_R(x_0))}\leq L$.  Suppose
\[
 e_{\eps_j,a_j}(u_j)\dd x\stackrel{*}{\rightharpoonup}\nustar,
 \quad
 \frac{a_jV(u_j)}{\eps_j}\dd x
 \stackrel{*}{\rightharpoonup}\zetastar
\]
on $B_R(x_0)$.  After reducing $\eta_0$ by a universal numerical factor,
the following assertions hold.
\begin{enumerate}[label=$(\theenumi)$]
\item If
$\nustar(\overline{B_r(x)})<\eta_0r$ and
$\overline{B_r(x)}\subset B_R(x_0)$, then
$\nustar\left(B_{\frac{r}{2}}(x)\right)=0$.
\item If $x\in\supp\nustar$ and
$\overline{B_r(x)}\subset B_R(x_0)$, then
\[
 \nustar(B_r(x))\geq\eta_0r.
\]
\item For every ball $B_r(x)$ with
$\overline{B_r(x)}\subset B_R(x_0)$,
\begin{equation}\label{eq:var-nu-zeta-comparison}
 \nustar\left(B_{\frac{r}{2}}(x)\right)
 \leq C(V,a_-,a_+,L)\zetastar\left(B_{\frac{3r}{4}}(x)\right).
\end{equation}
\end{enumerate}
\end{cor}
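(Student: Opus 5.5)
The plan is to transfer the $\eps$-level estimates of Theorem~\ref{thm:var-clearing-out} and Lemma~\ref{lem:var-uniform-energy-potential} to the limiting measures through rescaling and weak-star lower/upper semicontinuity. Throughout, I fix a ball $\overline{B_r(x)}\subset B_R(x_0)$ and rescale by $u_{j,r}(y):=u_j(x+ry)$, $a_{j,r}(y):=a_j(x+ry)$, $\eps_{j,r}:=\eps_j/r$. By \eqref{eq:var-scale} the rescaled coefficient satisfies $\norm{\nabla a_{j,r}}_{L^\infty(B_1)}\leq r\norm{\nabla a_j}_{L^\infty}\leq\frac{a_-}{2}=\delta_0$, since $r\leq R$. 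Also $\eps_{j,r}\leq1$ for $j$ large, because $\eps_j/R\to0$. Hence the rescaled problem lies in the coefficient class of Theorem~\ref{thm:var-clearing-out}.

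For assertion~(1), I will use that $\overline{B_r(x)}$ is compact, so upper semicontinuity on compact sets gives $\limsup_j\cE_{\eps_j,a_j}(u_j;B_r(x))\leq\nustar(\overline{B_r(x)})<\eta_0r$. This yields $\cE_{\eps_{j,r},a_{j,r}}(u_{j,r};B_1)<\eta_0\leq2\eta_0$ for large $j$. Then \eqref{eq:var-small-inner} gives $\cE_{\eps_{j,r},a_{j,r}}(u_{j,r};B_{5/8})\leq C_0\eps_{j,r}\eta_0\to0$, which scales back to $\cE_{\eps_j,a_j}(u_j;B_{5r/8}(x))\to0$. Lower semicontinuity on open sets then gives $\nustar(B_{r/2}(x))\leq\nustar(B_{5r/8}(x))\leq\liminf=0$. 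I expect the "universal reduction of $\eta_0$" to be needed only in (2), so that the strict and non-strict inequalities and the closed versus open balls match up.

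Assertion~(2) follows by contradiction from (1). Suppose $\nustar(B_r(x))<\eta_0r$ for some $x\in\supp\nustar$. Since $\nustar(\overline{B_s(x)})\leq\nustar(B_r(x))<\eta_0 r$ for $s<r$, I choose $s$ close to $r$ so that $\nustar(\overline{B_s(x)})<\eta_0' s$, where $\eta_0'$ is the original constant and the current one is $\eta_0=\eta_0'/2$. Then (1), applied with the original constant, gives $\nustar(B_{s/2}(x))=0$, which contradicts $x\in\supp\nustar$. This halving is the reduction of $\eta_0$ by a universal factor.

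For assertion~(3), I apply \eqref{eq:var-uniform-energy-potential-scaled} at radius $r$, which is legitimate since $\eps_j\leq r$ eventually and the $L^\infty$ bound $L$ holds. This gives
\[
\cE_{\eps_j,a_j}(u_j;B_{r/2}(x))\leq C\Big[\mathbb V_{\eps_j,a_j}(u_j;B_{3r/4}(x))+\tfrac{\eps_j}{r}\cE_{\eps_j,a_j}(u_j;B_r(x))\Big].
\]
The last term tends to $0$ because the energies on $B_r(x)$ are bounded by weak-star convergence on a compact set. For the left side, lower semicontinuity on the open ball gives $\nustar(B_{r/2}(x))\leq\liminf$. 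The main obstacle is the potential term, because upper semicontinuity naturally gives $\zetastar(\overline{B_{3r/4}(x)})$ rather than the open ball. I will handle it by first running the estimate with $3r/4$ replaced by $s<3r/4$, which the proof of Lemma~\ref{lem:var-uniform-energy-potential} permits since the good radius $\rho$ can be taken in $[\frac{9}{16},s/r]$ at the cost of a constant. Then $\nustar(B_{r/2}(x))\leq C\zetastar(\overline{B_s(x)})\leq C\zetastar(B_{3r/4}(x))$. Alternatively, I can apply the inequality on slightly smaller balls and let the radii increase to $r$, using continuity from below of measures on open sets.
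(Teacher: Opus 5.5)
Your proof is correct and follows the paper's route: rescale to the unit ball and apply \eqref{eq:var-small-inner} for (1), derive (2) from (1) by contradiction, and pass to the limit in \eqref{eq:var-uniform-energy-potential-scaled} with a monotone approximation in the radius for (3). The differences are cosmetic: you use upper/lower semicontinuity on closed/open balls where the paper picks radii whose circles carry no $\nustar$-mass, and in (2) you halve $\eta_0$, whereas the paper's choice $s\in(\nustar(B_r(x))/\eta_0,r)$ shows that no reduction of $\eta_0$ is actually needed there.
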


\begin{proof}
For (i), choose $r'\in(\frac{4r}{5},r)$ with
$\nustar(\partial B_{r'}(x))=0$ and
$\frac{\nustar(B_{r'}(x))}{r'}<2\eta_0$.  Weak convergence yields the same strict
normalized bound for large $j$.  Scale $B_{r'}(x)$ to $B_1$
and use \eqref{eq:var-small-inner}; its right side is
$C\frac{\eps_j}{r'}\cE_{\eps_j,a_j}(B_{r'})\to0$.  Since
$B_{\frac{r}{2}}\subset B_{\frac{5r'}{8}}$, this proves
$\nustar\left(B_{\frac{r}{2}}\right)=0$.
For (ii), argue by contradiction.  If
$\nustar(B_r(x))<\eta_0r$, choose
\[
 s\in\left(\frac{\nustar(B_r(x))}{\eta_0},r\right).
\]
Then
$\nustar(\overline{B_s(x)})\leq\nustar(B_r(x))<\eta_0s$, so (i) implies
$\nustar\left(B_{\frac{s}{2}}(x)\right)=0$, contradicting
$x\in\supp\nustar$.

For (iii), first take $r$ such that the circles of radii
$\frac{r}{2}$, $\frac{3r}{4}$, and $r$ carry no limiting mass.  Applying
\eqref{eq:var-uniform-energy-potential-scaled} yields
\begin{equation}\label{eq:var-nu-zeta-prelimit-comparison}
 \cE_{\eps_j,a_j}\left(u_j;B_{\frac{r}{2}}(x)\right)
 \leq C\left[
 \begin{aligned}
 &
 \mathbb V_{\eps_j,a_j}\left(u_j;B_{\frac{3r}{4}}(x)\right)
 \\
 &+\frac{\eps_j}{r}
 \cE_{\eps_j,a_j}\left(
 u_j;B_r(x)\backslash B_{\frac{r}{2}}(x)
 \right)
 \end{aligned}
 \right].
\end{equation}
Weak-star convergence on the compact set $\overline{B_r(x)}$ implies
\[
 \sup_j\cE_{\eps_j,a_j}(u_j;B_r(x))<+\infty.
\]
Since $\frac{\eps_j}{r}\to0$, the second term on the right-hand side of
\eqref{eq:var-nu-zeta-prelimit-comparison} tends to zero.  Passing to the
limit therefore yields \eqref{eq:var-nu-zeta-comparison} for such radii;
monotone approximation proves it for every admissible $r$.
\end{proof}

\begin{rem}
If the even-reflected conformal coefficient has
$\operatorname{Lip}(a)\le K$, then on a radius-$R$ disk the
rescaled seminorm is at most $RK$.  The package applies uniformly for
\[
R\le R_0:=\min\left\{R_{\rm chart},\frac{a_-}{2K}\right\}.
\]
The scale-invariant clearing-out hypothesis is
$R^{-1}\cE_{\eps,a}(u;B_R)\le2\eta_0$.
\end{rem}
 % Completion of the general curved-boundary argument.

\section{Boundary stress and density estimates}
\label{sec:boundary-analysis}
\label{sec:curved-completion}

We now analyze the part of the limiting measures carried by the boundary.
The argument is performed before boundary absolute continuity is known.

\subsection{Boundary stress before absolute continuity}

The next observation does not use boundary densities.

\begin{lem}
\label{lem:boundary-stress-measure}
Let $\Gamma\subset\partial\Omega$ be a $C^2$ arc and let
\[
 T_j=\upnu_j I_2-(\upmu_{j,ab})_{a,b=1}^2
 \stackrel{*}{\rightharpoonup}T
\]
be the Allen--Cahn stress measures in original coordinates.  If $\tau$
and $n$ are the unit tangent and inward unit normal along $\Gamma$, then,
as Radon measures on $\Gamma$,
\begin{equation}\label{eq:boundary-stress-measure}
 T_{nn}\llcorner\Gamma=0,
 \quad T_{\tau n}\llcorner\Gamma=0,
 \quad T\llcorner\Gamma
 =2(\tau\otimes\tau)(\zetastar\llcorner\Gamma).
\end{equation}
Moreover, $\zetastar\llcorner\Gamma$ has no atoms.
Equivalently,
\begin{equation}\label{eq:boundary-discrepancy-measure}
 \begin{aligned}
  \upmu_{\star,n\tau}\llcorner\Gamma&=0,\\
  (\upmu_{\star,nn}-\upmu_{\star,\tau\tau})\llcorner\Gamma
  &=2\zetastar\llcorner\Gamma,\\
  \nustar\llcorner\Gamma&=\upmu_{\star,nn}\llcorner\Gamma.
\end{aligned}
\end{equation}
No absolute-continuity hypothesis is used.
\end{lem}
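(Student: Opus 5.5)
The plan is to test the limiting inner variation \eqref{eq:inner-variation} of Proposition~\ref{prop:unconditional}(4), i.e. $\int_{\overline\Omega}T:DX=0$ for $X$ tangent to $\partial\Omega$, with vector fields that concentrate in a shrinking collar of $\Gamma$. We work in tubular coordinates $(s,d)$ near $\Gamma$, where $d=\dist(\cdot,\partial\Omega)$ is $C^2$ in a collar ($\partial\Omega\in C^3$), $\nabla d=n$ on $\Gamma$, and $\tau,n$ are extended as $C^1$ fields constant along normals. Fix $\psi\in C^1_c$ supported in a small neighbourhood of a point of $\Gamma$, and a cutoff $\eta\in C_c^\infty([0,1))$ with $\eta\equiv1$ near $0$. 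Set
\[
 \phi_\delta(t):=t\,\eta(t/\delta),
 \quad\text{so}\quad
 \phi_\delta(0)=0,
 \quad
 \abs{\phi_\delta}\leq C\delta,
 \quad
 \phi_\delta'(t)=\eta(t/\delta)+\tfrac{t}{\delta}\eta'(t/\delta).
\]
Then $\phi_\delta'$ is uniformly bounded, equals $1$ at $t=0$, and tends to $0$ pointwise for $t>0$.

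\emph{Normal and mixed components.} First take $X_\delta:=\psi\,\phi_\delta(d)\,n$. It vanishes on $\partial\Omega$, hence is admissible, and
\[
 DX_\delta=\psi\,\phi_\delta'(d)\,\nabla d\otimes n+O(\phi_\delta(d)).
\]
The error is $O(\delta)$ uniformly and the total variation of $T$ is finite, so it drops out in the limit. The main term converges boundedly to $\mathbf 1_{\Gamma}\,\psi\,n\otimes n$. Dominated convergence for the finite measures $\nustar,\upmu_{\star,ab}$ then gives $\int_\Gamma\psi\,\dd T_{nn}=0$. Next take $X_\delta:=\psi\,\phi_\delta(d)\,\tau$, which also vanishes on the boundary. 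The same computation, now with principal part $\psi\phi_\delta'(d)\,n\otimes\tau$, gives $\int_\Gamma\psi\,\dd T_{n\tau}=0$, and $T_{\tau n}=T_{n\tau}$ by symmetry. Since $\psi$ is arbitrary, $T_{nn}\llcorner\Gamma=T_{\tau n}\llcorner\Gamma=0$.

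\emph{Tangential component and the equivalent form.} The trace identity \eqref{eq:prelim-stress-trace}, $\tr T_j=2\upzeta_j$, passes to the weak-star limit as $\tr T=2\zetastar$. Restricting to $\Gamma$ and using $T_{nn}\llcorner\Gamma=0$ gives $T_{\tau\tau}\llcorner\Gamma=2\zetastar\llcorner\Gamma$, hence $T\llcorner\Gamma=2(\tau\otimes\tau)(\zetastar\llcorner\Gamma)$. Writing $T=\nustar I_2-\upmu_\star$ in the $(\tau,n)$ frame turns these three relations into \eqref{eq:boundary-discrepancy-measure}: $\nustar=\upmu_{\star,nn}$ from the $nn$ entry, $\upmu_{\star,n\tau}=0$, and $\upmu_{\star,nn}-\upmu_{\star,\tau\tau}=T_{\tau\tau}-T_{nn}=2\zetastar$ on $\Gamma$.

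\emph{No atoms.} This is the step I expect to be most delicate, because the collar cutoffs above produce $n\otimes\tau$ terms of size $1/\delta$ that cannot be used here. Instead I will use a field varying at a single scale $r$ in both directions. Suppose $\zetastar(\{p\})=m>0$ for some $p\in\Gamma$, and let $s$ be arclength with $s(p)=0$. Take $g\in C_c^1(\R)$ with $g'(0)=1$, and $\chi\in C_c^1(\R)$ with $\chi\equiv1$ near $0$. Set
\[
 X_r:=r\,g(s/r)\,\chi(d/r)\,\tau .
\]
This field is tangent on $\partial\Omega$, supported in $B_{Cr}(p)$, has $\abs{DX_r}\leq C$ uniformly in $r$, and satisfies $DX_r(p)=\tau\otimes\tau$. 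Split $\int T:DX_r$ into the atom at $p$ and the remainder. The atom contributes $T(\{p\}):(\tau\otimes\tau)=2m$, using the already established structure of $T\llcorner\Gamma$. The remainder is bounded by $C\abs T\big(B_{Cr}(p)\backslash\{p\}\big)\to0$ as $r\to0$. Since the total is zero for every $r$, this forces $m=0$. The only point to check carefully is that the $\chi'$ and curvature terms remain $O(1)$, not $O(1/r)$; this holds because of the prefactor $r$ in $X_r$.
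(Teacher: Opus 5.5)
Your proposal is correct and follows essentially the same route as the paper. You test the limiting inner variation with collar fields $t\,\chi(t/\delta)\phi(s)\,n$ and $t\,\chi(t/\delta)\phi(s)\,\tau$ to kill $T_{nn}$ and $T_{\tau n}$ on $\Gamma$, use the limiting trace identity $\tr T=2\zetastar$ for the tangential entry, and use a single-scale tangential field equal to $s\tau$ near $p$ to exclude atoms. The only differences are cosmetic: the paper uses the radial cutoff $\chi(\sqrt{s^2+t^2}/\delta)\,s\,\tau(s)$ where you use a product cutoff, and your dominated-convergence treatment of the collar terms replaces the paper's splitting into $T\llcorner\Gamma$ plus a remainder bounded by $\abs{T}(\{0<t<2\delta\})\to0$.
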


\begin{proof}
For every $X\in C_c^1(\Omega\cup\Gamma;\R^2)$ tangent to $\Gamma$, the
Neumann condition and \eqref{eq:inner-variation} yield
\begin{equation}\label{eq:stress-limit-tangent}
 \int_{\overline\Omega}T:DX=0.
\end{equation}
Use tubular coordinates $(s,t)$, where $t\geq0$ is inward distance.  Let
$\chi\in C_c^1([0,2))$ equal one near zero.  For
$\phi\in C_c^1(\Gamma)$ set
\[
 X_\delta^{(n)}=t\chi\left(\frac{t}{\delta}\right)\phi(s)n(s),
 \quad
 X_\delta^{(\tau)}=t\chi\left(\frac{t}{\delta}\right)\phi(s)\tau(s).
\]
Both fields vanish on $\Gamma$ and are admissible.  Their derivatives are
bounded independently of $\delta$, their supports lie in
$\{0\leq t<2\delta\}$, and on $\Gamma$
\[
 DX_\delta^{(n)}=\phi\,n\otimes n,
 \quad
 DX_\delta^{(\tau)}=\phi\,\tau\otimes n.
\]
Writing $T^{\partial}:=T\llcorner\Gamma$ and
$T^\circ:=T\llcorner(\Omega\cap\supp X_\delta)$, we have
\begin{align}
 0
 &=\int_\Gamma\phi\,\dd T_{nn}^{\partial}
 +\int_{\{0<t<2\delta\}}T^\circ:DX_\delta^{(n)},
 \label{eq:boundary-normal-test}\\
 0
 &=\int_\Gamma\phi\,\dd T_{\tau n}^{\partial}
 +\int_{\{0<t<2\delta\}}T^\circ:DX_\delta^{(\tau)}.
 \label{eq:boundary-mixed-test}
\end{align}
Since
\begin{equation}\label{eq:boundary-collar-vanishing}
 \sup_{\delta>0}\norm{DX_\delta^{(n)}}_\infty
 +\sup_{\delta>0}\norm{DX_\delta^{(\tau)}}_\infty<+\infty,
 \quad
 \abs T(\{0<t<2\delta\})\to0,
\end{equation}
letting $\delta\downarrow0$ in
\eqref{eq:boundary-normal-test}--\eqref{eq:boundary-mixed-test} implies
\begin{equation}\label{eq:boundary-two-stress-components}
 T_{nn}\llcorner\Gamma=T_{\tau n}\llcorner\Gamma=0.
\end{equation}
The exact diffuse identity
\begin{equation}\label{eq:diffuse-trace-identity}
 \upnu_j=\frac{1}{2}\tr\upmu_j+\upzeta_j,
 \quad
 \tr T_j=2\upzeta_j,
\end{equation}
passes to the limit, so $\tr T=2\zetastar$.  Symmetry of $T$ and
\eqref{eq:boundary-two-stress-components} yield
\begin{equation}\label{eq:boundary-stress-matrix}
 T\llcorner\Gamma
 =\begin{pmatrix}2\zetastar&0\\0&0\end{pmatrix}_{(\tau,n)}
 =2(\tau\otimes\tau)(\zetastar\llcorner\Gamma),
\end{equation}
which proves
\eqref{eq:boundary-stress-measure} and
\eqref{eq:boundary-discrepancy-measure}.

Fix $p\in\Gamma$, normalize the tubular coordinate by $s(p)=0$, and choose
$\chi\in C_c^1([0,2))$, $\chi=1$ on $[0,1]$.  Define
\begin{equation}\label{eq:atom-test-field}
 Y_\delta(s,t)
 :=\chi\left(\frac{(s^2+t^2)^{\frac{1}{2}}}{\delta}\right)s\tau(s).
\end{equation}
Then
\begin{equation}\label{eq:atom-test-properties}
 Y_\delta\cdot\nOmega=0\text{ on }\Gamma,
 \quad Y_\delta(p)=0,
 \quad DY_\delta(p)=\tau(p)\otimes\tau(p),
 \quad\sup_\delta\norm{DY_\delta}_\infty<+\infty.
\end{equation}
By \eqref{eq:boundary-stress-matrix},
\begin{equation}\label{eq:no-boundary-atom}
\begin{aligned}
 0=\lim_{\delta\downarrow0}\int T:DY_\delta
 &=T(\{p\}):DY_\delta(p)
 =2\zetastar(\{p\}).
\end{aligned}
\end{equation}
\end{proof}

\subsection{Conformal doubling and the forced radial identity}

Fix $p\in\partial\Omega$ and use the conformal chart from
Section~2.4, whose image agrees with $\Omega$ near $p$.  Write
\[
 v_j=u_j\circ\Psi,
 \quad a=\abs{\Psi'}^2.
\]
The equation and energy become
\begin{equation}\label{eq:curved-conformal-pde}
 -\eps_j\Delta v_j+\frac{a}{\eps_j}\nabla V(v_j)=0,
 \quad
 \partial_2v_j=0\quad\text{on }\{x_2=0\},
\end{equation}
and
\[
 \cE_{\eps_j,a}(v_j;U)
 =\int_U\left[
 \frac{\eps_j}{2}\abs{\nabla v_j}^2
 +\frac{a}{\eps_j}V(v_j)
 \right]\dd x.
\]
Evenly reflect $v_j$ and $a$ to $B_{2\rho}$.  Then
\begin{equation}\label{eq:even-coefficient}
 0<a_-\leq a\leq a_+,
 \quad a\in W^{1,\infty}(B_{2\rho}),
 \quad a(x_1,-x_2)=a(x_1,x_2).
\end{equation}
The reflected map is a weak solution of
\eqref{eq:curved-conformal-pde} on the full ball.

We shall repeatedly use the exact boundary factor in the doubled limits.
If $\lambda_j$ is a scalar diffuse measure in the upper half and
$\widetilde\lambda_j=\lambda_j+R_\#\lambda_j$, write
$\lambda=\lambda^\circ+\lambda^\partial$ for the decomposition of its
weak limit into the open-half and fixed-line parts.  Then
\begin{equation}\label{eq:curved-double-measure}
 \widetilde\lambda
 =\lambda^\circ+R_\#\lambda^\circ+2\lambda^\partial.
\end{equation}
For tensor measures, the open-half reflection is
\[
 Q(R_\#\Lambda^\circ)Q,
 \quad
 \Lambda^\circ=(\lambda_{ab}^\circ)_{a,b=1}^2,
 \quad
 Q:=\operatorname{diag}(1,-1).
\]
The diagonal fixed-line entries again acquire the factor two.  This follows
from Lemma~\ref{lem:measure-double}.  In particular, the restriction of a
doubled boundary measure must not be identified with its one-sided limit.

Set
\begin{equation}\label{eq:conformal-diffuse-measures}
\begin{aligned}
 \upnu_j^a
 :=\left[\frac{\eps_j}{2}\abs{\nabla v_j}^2
 +\frac{a}{\eps_j}V(v_j)\right]\dd x,
 \quad
 \upzeta_j^a:=\frac{a}{\eps_j}V(v_j)\dd x,
 \quad
 \upmu_{j,ab}:=\eps_j\partial_av_j\cdot\partial_bv_j\dd x,\\
 h_j:=\frac{V(v_j)}{\eps_j}\nabla a\dd x.
\end{aligned}
\end{equation}
Here $h_j$ is an $\R^2$-valued Radon measure and
$\upmu_j:=(\upmu_{j,ab})_{a,b=1}^2$ is a symmetric matrix-valued measure.
After passing to a further subsequence,
\begin{equation}\label{eq:conformal-measure-limits}
 \upnu_j^a\stackrel{*}{\rightharpoonup}\nustar^a,
 \quad
 \upzeta_j^a\stackrel{*}{\rightharpoonup}\zetastar^a,
 \quad
 \upmu_{j,ab}\stackrel{*}{\rightharpoonup}\upmu_{\star,ab}.
\end{equation}
Moreover, $h_j\stackrel{*}{\rightharpoonup}h$, and
\begin{equation}\label{eq:h-domination}
 \abs{h}\leq B\zetastar^a,
 \quad B:=\frac{\norm{\nabla a}_{L^\infty}}{a_-}.
\end{equation}
Indeed,
\begin{equation}\label{eq:h-domination-diffuse}
 \abs{h_j}
 =\frac{V(v_j)}{\eps_j}\abs{\nabla a}\,\dd x
 \leq\frac{\norm{\nabla a}_{L^\infty}}{a_-}\upzeta_j^a,
\end{equation}
and \eqref{eq:h-domination} follows by weak-star lower semicontinuity.
We do not identify $h$ with $\frac{\nabla a}{a}\zetastar^a$ on the fixed line,
because the even coefficient has two one-sided normal derivatives.

For a centre $p$ on the fixed line, translate $p$ to the origin and define
on $B_\rho\backslash\{0\}$
\begin{equation}\label{eq:polar-frame-definition}
 e_r(x):=\frac{x}{\abs x},
 \quad
 e_\theta(x):=(-e_r^2(x),e_r^1(x)).
\end{equation}
For $v\in W^{1,2}(B_\rho;\R^k)$ and
$\alpha,\beta\in\{r,\theta\}$, set
\begin{equation}\label{eq:polar-derivative-definition}
 \partial_rv:=e_r\cdot\nabla v,
 \quad
 \nabla_\theta v:=e_\theta\cdot\nabla v,
 \quad
 \upmu_{\star,\alpha\beta}
 :=(e_\alpha\otimes e_\beta):\upmu_\star.
\end{equation}
Define the signed Radon measure
\begin{equation}\label{eq:radial-defect}
 \mathcal N:=2\zetastar^a+\upmu_{\star,rr}-\upmu_{\star,\theta\theta},
\end{equation}
and extend it by zero at the centre.

\begin{lem}\label{lem:radial-defect-positive}
The radial defect measure $\mathcal N$ defined in
\eqref{eq:radial-defect} is non-negative.
\end{lem}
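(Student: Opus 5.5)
The plan is to avoid any pointwise sign for the diffuse integrand
$2\frac{a}{\eps_j}V(v_j)+\eps_j\abs{\partial_rv_j}^2-\eps_j\abs{\nabla_\theta v_j}^2$,
which is not sign-definite. Instead I will identify the limiting stress $T^a:=\nustar^aI_2-\upmu_\star$ in the doubled chart as a measure of the form $2(\tau\otimes\tau)\zetastar^a$ for a measurable unit field $\tau$, and then compute $\mathcal N$ from it.

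First I rewrite $\mathcal N$ in terms of $T^a$. Since $\upmu_\star=\nustar^aI_2-T^a$, the $\nustar^a$ parts cancel in $\upmu_{\star,rr}-\upmu_{\star,\theta\theta}$, so
\[
 \mathcal N=2\zetastar^a+T^a_{\theta\theta}-T^a_{rr}.
\]
Here $\tr T^a=2\zetastar^a$, because at the diffuse level $\tr T_{\eps_j,a}=2\frac{a}{\eps_j}V(v_j)$ and this identity passes to the limit. Suppose $T^a=2(\tau\otimes\tau)\zetastar^a$ with $\abs\tau=1$. Writing $\tau_r:=\tau\cdot e_r$ and $\tau_\theta:=\tau\cdot e_\theta$, so that $\tau_r^2+\tau_\theta^2=1$, we get
\[
 \mathcal N=2\zetastar^a\left(1+\tau_\theta^2-\tau_r^2\right)=4\tau_\theta^2\,\zetastar^a\geq0
\]
on $B_\rho\backslash\{0\}$. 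The frame $(e_r,e_\theta)$ is continuous away from the centre, so all polar components are well-defined Radon measures there. This reduces the lemma to the representation $T^a=2(\tau\otimes\tau)\zetastar^a$ on three pieces: the open upper half, the open lower half, and the fixed line $\Gamma_\rho$.

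\emph{Open upper half.} By \eqref{eq:conformal-stress-change}, $T_{\eps_j,a}(v_j)\,\dd x$ is the pullback under $\Psi$ of the physical stress measure $T_{\eps_j}(u_j)\,\dd y$, conjugated by the continuous rotation $Q$. The weight $a$ is exactly cancelled by the Jacobian $\det D\Psi=a$. Hence the chart limit on $B_\rho^+$ is the pullback of the physical limit $T$ on $\Psi(B_\rho^+)\subset\subset\Omega$. There Proposition~\ref{prop:unconditional}(3) together with Theorem~\ref{thm:bethuel-interior}, in particular \eqref{eq:bethuel-stress-representation}, gives $T=2(\tau\otimes\tau)\zetastar$. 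Conjugating by $Q$ and using the conformality of $D\Psi$ turns this into $2(\tau'\otimes\tau')\zetastar^a$, where $\tau'=Q^T\tau$ is the chart tangent. The potential measures match, since $\frac{a}{\eps_j}V(v_j)\,\dd x$ is exactly the pullback of $\frac{1}{\eps_j}V(u_j)\,\dd y$.

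\emph{Open lower half.} By Lemma~\ref{lem:measure-double}, the reflected part is $Q_0(R_\#T^{a,\circ})Q_0$ with $Q_0=\operatorname{diag}(1,-1)$. This has the same rank-one form with the reflected unit vector $Q_0\tau'$, and the scalar part is $R_\#\zetastar^{a,\circ}$.

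\emph{Fixed line.} I expect this to be the main obstacle, because it is where absolute continuity is not yet available. Lemma~\ref{lem:boundary-stress-measure} gives $T\llcorner\Gamma=2(\tau\otimes\tau)\zetastar\llcorner\Gamma$ with $\tau$ the boundary tangent, and uses no density hypothesis. Transporting by $\Psi$, which maps the flat edge to $\partial\Omega$ so that $Q$ sends $e_1$ to $\tau$, gives the one-sided boundary stress $2(e_1\otimes e_1)\zetastar^{a,\partial}$. Doubling via \eqref{eq:tensor-measure-double} gives $T^{a,\partial}+Q_0T^{a,\partial}Q_0=2(e_1\otimes e_1)\cdot2\zetastar^{a,\partial}$, which is consistent with \eqref{eq:curved-double-measure}.

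Pushing the one-sided boundary identity through the chart needs some care. The diagonal entries double, the mixed $e_1e_2$ entries cancel (they vanish anyway by Lemma~\ref{lem:boundary-stress-measure}), and $Q$ restricted to the edge must carry $e_1$ exactly to the boundary tangent, which holds because $\Psi$ is $C^2$ up to the edge. On $\Gamma_\rho\backslash\{0\}$, with the centre $p$ at the origin, $e_r=\pm e_1$, so $\tau_\theta=0$ and $\mathcal N\llcorner\Gamma_\rho=0$. Adding the three pieces, and using that $\mathcal N$ is extended by zero at the centre, yields $\mathcal N\geq0$.
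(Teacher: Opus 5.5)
Your argument is correct and is essentially the paper's proof. You use the same decomposition into the two open halves, the fixed line, and the centre; on the open halves you use the interior representation $T=2(\tau\otimes\tau)\zetastar$ transported through the chart, and on $\Gamma_\rho$ you use Lemma~\ref{lem:boundary-stress-measure}. Your unified identity $\mathcal N=4\tau_\theta^2\,\zetastar^a$ is the measure form of the paper's $4\Uptheta\sin^2\gamma$ together with the vanishing on $\Gamma_\rho$. You treat the centre through the extension-by-zero convention rather than the paper's isotropy argument, and you track the factor two on the fixed line more explicitly than the paper does.

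One small slip: $\Psi(B_\rho^+)$ is open in $\Omega$ but not compactly contained in it. This is harmless, because the interior representation holds on every $U\subset\subset\Omega$ and therefore on all of $\Omega$.
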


\begin{proof}
We decompose the doubled ball into
\[
 B_\rho
 =
 B_\rho^+\cup B_\rho^-
 \cup(\Gamma_\rho\backslash\{0\})\cup\{0\}
\]
and determine the restriction of $\mathcal N$ on each part.

\medskip
\noindent
\emph{Interior contribution.}
On $B_\rho^+\cup B_\rho^-$, Bethuel's interior discrepancy relations,
transported through the conformal chart, yield
\begin{equation}\label{eq:defect-interior-matrix}
 \begin{aligned}
  \upmu_{\star,n\tau}&=0,
  &\upmu_{\star,nn}&=e\,\cH^1\llcorner\Sstar,\\
  \upmu_{\star,\tau\tau}
  &=(e-2\Uptheta)\,\cH^1\llcorner\Sstar,
  &\zetastar^a&=\Uptheta\,\cH^1\llcorner\Sstar.
\end{aligned}
\end{equation}
At $\cH^1$-almost every point of $\Sstar\backslash\Gamma_\rho$, write
\[
 e_r=\cos\gamma\,\tau+\sin\gamma\,n,
 \quad
 e_\theta=-\sin\gamma\,\tau+\cos\gamma\,n.
\]
Since $\upmu_{\star,n\tau}=0$, one has
\begin{equation}\label{eq:defect-polar-components}
\begin{aligned}
 \upmu_{\star,rr}
 &=
 \cos^2\gamma\,\upmu_{\star,\tau\tau}
 +\sin^2\gamma\,\upmu_{\star,nn},
 \\
 \upmu_{\star,\theta\theta}
 &=
 \sin^2\gamma\,\upmu_{\star,\tau\tau}
 +\cos^2\gamma\,\upmu_{\star,nn}.
\end{aligned}
\end{equation}
Consequently,
\[
\begin{aligned}
 \upmu_{\star,rr}-\upmu_{\star,\theta\theta}
 &=
 \cos(2\gamma)
 \left(
 \upmu_{\star,\tau\tau}-\upmu_{\star,nn}
 \right)
 \\
 &=
 -2\Uptheta\cos(2\gamma)\,
 \cH^1\llcorner\Sstar.
\end{aligned}
\]
It follows that
\begin{equation}\label{eq:defect-interior-positive}
 \mathcal N\llcorner(B_\rho^+\cup B_\rho^-)
 =
 4\Uptheta\sin^2\gamma\,
 \cH^1\llcorner
 \left(\Sstar\cap(B_\rho^+\cup B_\rho^-)\right)
 \geq0.
\end{equation}

\medskip
\noindent
\emph{Fixed-line contribution.}
Let $x\in\Gamma_\rho\backslash\{0\}$.  Since the radial centre is the
origin,
\[
 e_r(x)=\pm\tau_\Gamma,
 \quad
 e_\theta(x)=\pm n_\Gamma.
\]
The boundary stress identity, established before doubling, implies
\[
 T\llcorner\Gamma_\rho
 =
 2(\tau_\Gamma\otimes\tau_\Gamma)
 \left(\zetastar^a\llcorner\Gamma_\rho\right).
\]
Equivalently,
\[
 \upmu_{\star,\tau\tau}
 -\upmu_{\star,nn}
 =
 -2\zetastar^a
 \quad\text{on }\Gamma_\rho.
\]
Hence
\begin{equation}\label{eq:defect-boundary-zero}
\begin{aligned}
 \mathcal N\llcorner(\Gamma_\rho\backslash\{0\})
 &=
 \left(
 2\zetastar^a
 +\upmu_{\star,rr}
 -\upmu_{\star,\theta\theta}
 \right)
 \llcorner(\Gamma_\rho\backslash\{0\})
 \\
 &=
 \left(
 2\zetastar^a
 +\upmu_{\star,\tau\tau}
 -\upmu_{\star,nn}
 \right)
 \llcorner(\Gamma_\rho\backslash\{0\})
 =0.
\end{aligned}
\end{equation}

\medskip
\noindent
\emph{Contribution at the centre.}
The no-atom argument yields
\[
 \zetastar^a(\{0\})=0,
 \quad
 T(\{0\})=0.
\]
Since $T=\nustar I_2-\upmu_\star$, it follows that
\[
 \upmu_\star(\{0\})=\nustar(\{0\})I_2.
\]
Thus the atomic part of $\upmu_\star$ at the origin is isotropic, and
therefore
\[
 \left(
 \upmu_{\star,rr}-\upmu_{\star,\theta\theta}
 \right)(\{0\})=0.
\]
Together with $\zetastar^a(\{0\})=0$, this yields
\begin{equation}\label{eq:defect-centre-zero}
 \mathcal N(\{0\})=0.
\end{equation}

Combining \eqref{eq:defect-interior-positive},
\eqref{eq:defect-boundary-zero}, and
\eqref{eq:defect-centre-zero}, we conclude that
\[
 \mathcal N\geq0
 \quad\text{on }B_\rho.
\]
\end{proof}

\begin{rem}
In Bethuel's interior setting, the discrepancy relations directly yield
\[
 \mathcal N=4\Uptheta\sin^2\gamma\,\cH^1\llcorner\Sstar\geq0.
\]
After conformal doubling, the same computation applies away from the fixed
line, while the new boundary stress identity and the no-atom argument are
needed to show that the fixed-line and centre contributions vanish.
\end{rem}

\begin{lem}\label{lem:forced-monotonicity}
Let
\[
 Z(r):=\zetastar^a(B_r),
 \quad F(r):=\frac{Z(r)}{r},
 \quad H(r):=\int_{B_r}x\cdot\dd h(x).
\]
For almost every $0<r_0<r_1<\rho$,
\begin{equation}\label{eq:forced-radial-limit}
 F(r_1)-F(r_0)
 =\int_{B_{r_1}\backslash B_{r_0}}
 \frac{1}{4\abs{x}}\dd\mathcal N(x)
 +\int_{r_0}^{r_1}\frac{H(s)}{2s^2}\dd s.
\end{equation}
Define the right-continuous representative
\[
 F^+(r):=\frac{\zetastar^a(\overline B_r)}{r}.
\]
Consequently,
\begin{equation}\label{eq:gronwall-monotonicity}
 r\mapsto e^{\frac{Br}{2}}F^+(r)\quad\text{is non-decreasing},
\end{equation}
and
\begin{equation}\label{eq:linear-zeta-bound}
 \frac{\zetastar^a(B_r)}{r}
 \leq F^+(r)
 \leq e^{\frac{B(R-r)}{2}}F^+(R)
 =e^{\frac{B(R-r)}{2}}\frac{\zetastar^a(\overline B_R)}{R}
 \quad(0<r<R<\rho).
\end{equation}
\end{lem}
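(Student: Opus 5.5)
The plan is to derive \eqref{eq:forced-radial-limit} at the diffuse level from the forced stationary identity on the doubled ball, and only then pass to the limit. For the reflected $v_j$ on $B_{2\rho}$ the stress $A_j:=e_{\eps_j,a}(v_j)I_2-\eps_j(\nabla v_j)^T\nabla v_j$ satisfies $\partial_\ell(A_j)_{i\ell}=\frac{V(v_j)}{\eps_j}\partial_i a$ distributionally on the whole ball, as in Lemma~\ref{lem:var-pohozaev}. This holds because the even reflection of a Neumann solution is a weak solution there, and $a$ is Lipschitz. I will test this identity with $X(x)=\varphi(\abs x)x$, where $\varphi$ is a smooth radial cutoff. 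Since $DX=\varphi I_2+\varphi'(\abs x)\abs x\,e_r\otimes e_r$, we have $A_j:DX=2\varphi\,\frac{aV(v_j)}{\eps_j}+\varphi'\abs x\,(A_j)_{rr}$. The $rr$-component is
\[
(A_j)_{rr}=\tfrac12\big(\tr A_j+(A_j)_{rr}-(A_j)_{\theta\theta}\big)=\frac{aV}{\eps_j}+\frac{\eps_j}{2}\big(\abs{\nabla_\theta v_j}^2-\abs{\partial_rv_j}^2\big).
\]
Using the doubled measures, this gives the exact identity
\[
-\int x\varphi\cdot \dd h_j=\int 2\varphi\,\dd\upzeta^a_j+\int\varphi'(\abs x)\abs x\,\dd\big(\upzeta^a_j-\tfrac12(\upmu_{j,rr}-\upmu_{j,\theta\theta})\big).
\]
All terms are integrals of functions that are continuous away from $0$ against weakly converging measures. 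The factor $\abs x$ kills any issue at the centre, and $\varphi'$ vanishes near $0$. So the identity passes to the limit with $\upzeta^a_j,\upmu_j,h_j$ replaced by $\zetastar^a,\upmu_\star,h$. The doubling factors of Lemma~\ref{lem:measure-double} are already built into these limits, which is why $\mathcal N$ is the doubled-chart quantity.

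Next I will rewrite the limit identity via $\upzeta^a-\frac12(\upmu_{rr}-\upmu_{\theta\theta})=\frac32\zetastar^a-\frac12\mathcal N$ and let $\varphi\uparrow\mathbf 1_{[0,s)}$. This is valid for all $s$ outside the countable set where $\zetastar^a(\partial B_s)$ or $\abs{\mathcal N}(\partial B_s)$ is positive. The result is the limiting Pohozaev relation
\[
2Z(s)-s\,\tfrac{\dd}{\dd s}\Big[\tfrac32 Z-\tfrac12\textstyle\int_{B_s}\dd\mathcal N\Big]=-H(s),
\]
understood distributionally in $s$. The clean way to get this is to use $\varphi(\abs x)=\psi(\abs x)$, with $\psi$ an arbitrary $C^1_c([0,\rho))$ function, and read off an identity of measures on $(0,\rho)$ for the radial pushforwards. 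After sorting coefficients (this is routine bookkeeping, and I expect to have to fix the constants $\frac14$, $\frac12$ carefully), one obtains $\dd(Z/s)=\frac{1}{4s}\,\dd\mathcal N^{\mathrm{rad}}+\frac{H(s)}{2s^2}\dd s$, where $\mathcal N^{\mathrm{rad}}$ is the pushforward of $\mathcal N$ under $x\mapsto\abs x$. Integrating between good radii $r_0<r_1$ yields \eqref{eq:forced-radial-limit}.

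The monotonicity then follows. By Lemma~\ref{lem:radial-defect-positive} the $\mathcal N$ term is non-negative. By \eqref{eq:h-domination}, $\abs{H(s)}\le s\abs h(B_s)\le BsZ(s)$, so $\frac{H(s)}{2s^2}\ge-\frac B2F(s)$. This gives $\dd F\ge-\frac B2F\,\dd r$ as measures on $(0,\rho)$, hence $\dd(e^{Br/2}F)\ge0$ by the product rule for BV functions; the continuous factor $e^{Br/2}$ causes no jump terms. The relation holds at a.e.\ pair of radii, and $F^+$ agrees with $F$ off a countable set and is right-continuous. Taking right limits therefore upgrades monotonicity to all $r$, which is \eqref{eq:gronwall-monotonicity}. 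Finally, \eqref{eq:linear-zeta-bound} is immediate, since $Z(r)\le\zetastar^a(\overline B_r)$ and $e^{Br/2}F^+(r)\le e^{BR/2}F^+(R)$.

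The main obstacle is the limit passage near the fixed line and the centre. The components $\upmu_{rr},\upmu_{\theta\theta}$ are discontinuous at $0$, and $h$ is not identified on $\Gamma_\rho$. I would handle this by keeping every test function multiplied by $\abs x$ or by $\varphi'$, so that the integrands are continuous up to the origin, and by using only the domination \eqref{eq:h-domination} for $h$. The centre atom is harmless because $\mathcal N(\{0\})=0$, and only the annuli enter \eqref{eq:forced-radial-limit}.
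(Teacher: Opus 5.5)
Your proposal is correct and is essentially the paper's argument. You test the forced identity $\diver A_j=h_j$ with radial fields $\psi(\abs x)\,x$, arrive at $\dd F=\frac{1}{4s}\,\dd\mathcal N^{\mathrm{rad}}+\frac{H(s)}{2s^2}\,\dd s$, and close with $\mathcal N\geq0$, $\abs{H(s)}\leq Bs^2F(s)$ and the integrating factor $e^{\frac{Bs}{2}}$. The only difference is the order of steps: you pass to the limit in the smooth-cutoff identity before the radial bookkeeping, which even yields the identity for every pair $r_0<r_1$, whereas the paper integrates the diffuse relation $F_j'=\frac{D_j}{4r}+\frac{H_j}{2r^2}$ between good radii and then passes to the limit.

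One arithmetic slip needs fixing. Since $\upmu_{\star,rr}-\upmu_{\star,\theta\theta}=\mathcal N-2\zetastar^a$, one has $\zetastar^a-\frac12(\upmu_{\star,rr}-\upmu_{\star,\theta\theta})=2\zetastar^a-\frac12\mathcal N$, not $\frac32\zetastar^a-\frac12\mathcal N$. Your intermediate relation should therefore read $2Z(s)-s\frac{\dd}{\dd s}\bigl[2Z(s)-\frac12\mathcal N(B_s)\bigr]=-H(s)$. This rearranges exactly to your final formula, and hence to \eqref{eq:forced-radial-limit}.
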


\begin{rem}
When $a\equiv1$, one has $h=0$ and $B=0$, so
\eqref{eq:forced-radial-limit} reduces to
\cite[Lemma 1.24]{Bethuel2025} and
\eqref{eq:gronwall-monotonicity} becomes the exact monotonicity of
\cite[Proposition 1.23]{Bethuel2025}.  For variable $a$, the force generated by
$\nabla a$ is controlled by $\abs h\leq B\zetastar^a$, yielding the
exponentially corrected almost monotonicity above.
\end{rem}

\begin{proof}
The coefficient stress $A_j:=\upnu_j^aI-\upmu_j$ satisfies
$\diver A_j=h_j$.  Define
\begin{equation}\label{eq:diffuse-radial-quantities}
 Z_j(r):=\upzeta_j^a(B_r),
 \quad F_j(r):=\frac{Z_j(r)}{r},
 \quad H_j(r):=\int_{B_r}x\cdot\dd h_j(x).
\end{equation}
Testing $\diver A_j=h_j$ with radial cutoffs converging to
$x\mathbf1_{B_r}$ yields, for almost every $r$,
\begin{equation}\label{eq:variable-pohozaev-limit-proof}
 2Z_j(r)+H_j(r)
 =r\int_{\partial B_r}\left[
 \frac{\eps_j}{2}
 (\abs{\nabla_\theta v_j}^2-\abs{\partial_rv_j}^2)
 +\frac{a}{\eps_j}V(v_j)
 \right]\dd\cH^1.
\end{equation}
Since
$Z_j'(r)=\int_{\partial B_r}\frac{aV(v_j)}{\eps_j}\,\dd\cH^1$ for almost every
$r$, \eqref{eq:variable-pohozaev-limit-proof} yields
\begin{equation}\label{eq:diffuse-F-derivative}
 F_j'(r)=\frac{D_j(r)}{4r}+\frac{H_j(r)}{2r^2},
\end{equation}
where
\begin{equation}\label{eq:diffuse-defect-slice}
 D_j(r):=\int_{\partial B_r}\left[
 2\frac{a}{\eps_j}V(v_j)
 +\eps_j(\abs{\partial_rv_j}^2-\abs{\nabla_\theta v_j}^2)
 \right]\dd\cH^1.
\end{equation}
Equivalently, with
\begin{equation}\label{eq:diffuse-defect-measure}
 \mathcal N_j
 :=\left[
 2\frac{a}{\eps_j}V(v_j)
 +\eps_j(\abs{\partial_rv_j}^2-\abs{\nabla_\theta v_j}^2)
 \right]\dd x,
\end{equation}
integration of \eqref{eq:diffuse-F-derivative} yields
\begin{equation}\label{eq:diffuse-radial-integrated}
 F_j(r_1)-F_j(r_0)
 =\int_{B_{r_1}\backslash B_{r_0}}
 \frac{1}{4\abs x}\,\dd\mathcal N_j(x)
 +\int_{r_0}^{r_1}\frac{H_j(s)}{2s^2}\,\dd s.
\end{equation}
Choose $r_0,r_1$ so that every limiting measure in
\eqref{eq:conformal-measure-limits} assigns zero mass to
$\partial B_{r_0}\cup\partial B_{r_1}$.  Then
\begin{align}
 \mathcal N_j&\stackrel{*}{\rightharpoonup}\mathcal N
 &&\text{on }B_{r_1}\backslash\overline B_{r_0},
 \label{eq:defect-weak-limit}\\
 \int_{r_0}^{r_1}\frac{H_j(s)}{2s^2}\,\dd s
 &=\int_{B_{r_1}}K_{r_0,r_1}(x)\cdot\dd h_j(x)
 \to
 \int_{B_{r_1}}K_{r_0,r_1}(x)\cdot\dd h(x),
 \label{eq:forcing-Fubini-limit}
\end{align}
where
\begin{equation}\label{eq:forcing-kernel}
 K_{r_0,r_1}(x)
 :=\frac{x}{2}\int_{\max\{r_0,\abs x\}}^{r_1}s^{-2}\,\dd s
 \ \mathbf1_{B_{r_1}}(x).
\end{equation}
Passing to the limit in \eqref{eq:diffuse-radial-integrated} proves
\eqref{eq:forced-radial-limit} at such radii; approximation from above and
below proves it for almost every $r_0,r_1$.

By \eqref{eq:h-domination},
\begin{equation}\label{eq:forcing-H-bound}
 \abs{H(s)}
 \leq s\abs h(B_s)
 \leq Bs\zetastar^a(B_s)=Bs^2F(s).
\end{equation}
Since $\mathcal N\geq0$, equations
\eqref{eq:forced-radial-limit} and \eqref{eq:forcing-H-bound} imply, as
Radon measures on $(0,\rho)$,
\begin{equation}\label{eq:F-BV-inequality}
 \dd F+\frac{B}{2}F(r)\,\dd r\geq0.
\end{equation}
For the right-continuous representative,
\begin{equation}\label{eq:integrating-factor-measure}
 \dd\left(e^{\frac{Br}{2}}F^+(r)\right)
 =e^{\frac{Br}{2}}\left(\dd F+\frac{B}{2}F(r)\,\dd r\right)\geq0.
\end{equation}
This is \eqref{eq:gronwall-monotonicity}; evaluating
\eqref{eq:integrating-factor-measure} on $(r,R]$ yields
\eqref{eq:linear-zeta-bound}.
\end{proof}

\subsection{Boundary density and absolute continuity}

The forced radial identity used in this subsection is proved in
Lemma~\ref{lem:forced-monotonicity} above.

Let $\Sstar:=\supp\nustar$ in $\overline\Omega$.  The clearing-out package
Corollary~\ref{cor:var-measure-package}, transferred through the finitely many
bi-Lipschitz boundary charts, applies to balls with arbitrary centres in
the corresponding doubled charts, not only to centres on the fixed line.
Together with a finite cover of the remaining compact interior, the
smaller charts have a positive Lebesgue number.  Hence there are uniform
constants $r_c,\eta_c,C_c>0$ such that
\begin{align}
p\in \Sstar,\quad0<r<r_c
 &\Rightarrow
 \nustar(B_r(p)\cap\overline\Omega)\geq\eta_c r,
 \label{eq:CO-interface}\\
\nustar\left(B_{\frac{r}{2}}(p)\cap\overline\Omega\right)
&\leq C_c\zetastar(B_r(p)\cap\overline\Omega).
\label{eq:CP-interface}
\end{align}
The constants can be chosen uniformly because $\partial\Omega$ is compact.
In flat coordinates write $p=(p_1,d)$, where $d\geq0$.  If
$y=(y_1,-q)$ lies below the fixed line, then
\begin{equation}\label{eq:crossing-ball-reflection-case}
 \abs{Ry-p}^2
 =(y_1-p_1)^2+(q-d)^2
 \leq(y_1-p_1)^2+(q+d)^2
 =\abs{y-p}^2.
\end{equation}
Consequently, for every non-negative diffuse measure $\lambda_j$ and its
even double $\widetilde\lambda_j$,
\begin{equation}\label{eq:doubled-physical-ball-comparison}
 \widetilde\lambda_j(B_r(p))
 \leq2\lambda_j(B_r(p)\cap\overline{B_{2\rho}^+}).
\end{equation}
Equation \eqref{eq:doubled-physical-ball-comparison}, followed by the fixed
bi-Lipschitz distortion of $\Psi$, yield
\eqref{eq:CO-interface}--\eqref{eq:CP-interface} with uniform constants.

\begin{prop}[Linear boundary densities]
\label{prop:boundary-linear-density}
For every compact boundary arc $\Gamma'\subset\subset\partial\Omega$ there are
$r_*,c_*,C_*>0$ such that, for $p\in \Sstar\cap\Gamma'$ and $0<r<r_*$,
\begin{equation}\label{eq:two-sided-boundary-density}
 c_*r\leq\zetastar(B_r(p)\cap\overline\Omega)
 \leq\nustar(B_r(p)\cap\overline\Omega)\leq C_*r.
\end{equation}
For each $p\in\Gamma'$ the upper bounds remain valid, with the lower
bound omitted.
\end{prop}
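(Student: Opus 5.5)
The plan is to combine three earlier results: the forced monotonicity of Lemma~\ref{lem:forced-monotonicity} in a doubled conformal chart for the upper bounds, the clearing-out bounds \eqref{eq:CO-interface}--\eqref{eq:CP-interface} for the lower bound, and the bi-Lipschitz chart distortion to pass between physical and chart balls.

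\emph{Setup.} Cover $\Gamma'$ by finitely many conformal charts $\Psi:B_{2\rho}^+\to\mathcal O_p$ as in Section~2.4. By a Lebesgue-number argument, every $p\in\Gamma'$ corresponds to a fixed-line point $\tilde p=\Psi^{-1}(p)$ lying well inside one chart. Let $\Lambda$ be a uniform bi-Lipschitz constant for all charts. Then $\Psi(B_{r/\Lambda}(\tilde p)\cap\overline{B^+})\subset B_r(p)\cap\overline\Omega\subset\Psi(B_{\Lambda r}(\tilde p)\cap\overline{B^+})$ for $r$ below a uniform threshold. Physical and chart measures are comparable up to constants, since $a_-\le a\le a_+$ and $\Psi$ is a diffeomorphism up to the edge. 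Concretely, $\zetastar$ on a physical set is at most $C\zetastar^a$ on its preimage. The doubled measure $\widetilde\zetastar^a$ of a ball centred on the fixed line dominates the one-sided one by \eqref{eq:curved-double-measure}.

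\emph{Upper bound.} Apply Lemma~\ref{lem:forced-monotonicity} at the centre $\tilde p$, translated to the origin, with a fixed $R=\rho/2$, say. Inequality \eqref{eq:linear-zeta-bound} gives
\[
 \zetastar^a(B_s(\tilde p))\le e^{\frac{BR}{2}}\frac{\widetilde\zetastar^a(\overline B_R(\tilde p))}{R}\,s .
\]
The numerator is at most $2C M_0$ by the energy bound, because the doubling counts at most twice the total mass. Hence $\zetastar(B_r(p)\cap\overline\Omega)\le C r$ for $r<r_*$. For $\nustar$, use \eqref{eq:CP-interface} at radius $2r$:
\[
 \nustar(B_r(p)\cap\overline\Omega)\le C_c\,\zetastar(B_{2r}(p)\cap\overline\Omega)\le 2C_cC r .
\]
This needs $2r<r_c$, which we arrange by shrinking $r_*$. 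None of this uses $p\in\Sstar$, so the upper bounds hold for every $p\in\Gamma'$, as claimed in the last sentence of the statement.

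\emph{Lower bound.} The middle inequality $\zetastar\le\nustar$ is immediate from $\upzeta_j\le\upnu_j$. For $p\in\Sstar\cap\Gamma'$, \eqref{eq:CO-interface} gives $\nustar(B_{r/2}(p)\cap\overline\Omega)\ge\eta_c r/2$. Then \eqref{eq:CP-interface} gives
\[
 \zetastar(B_r(p)\cap\overline\Omega)\ge C_c^{-1}\nustar(B_{r/2}(p)\cap\overline\Omega)\ge\frac{\eta_c}{2C_c}\,r ,
\]
which yields $c_*=\eta_c/(2C_c)$.

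\emph{Main obstacle.} The delicate point is that Lemma~\ref{lem:forced-monotonicity} is stated for the measures in the chart, while its monotone quantity $F^+$ only holds for almost every radius. Converting to physical balls requires care with the chart distortion and with the doubling factor on the fixed line. I would handle this by applying \eqref{eq:linear-zeta-bound} at the chart radius $\Lambda r$, which holds for all $r$ because it uses the right-continuous representative. Two further facts are needed: the chart measures restricted to the upper half-ball are the pushforwards of the physical limits, weighted by bounded Jacobian factors, and the monotonicity constant $B$ is uniform over the finitely many charts.
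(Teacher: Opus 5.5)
Your proposal is correct and follows essentially the same route as the paper. The $\zetastar$ upper bound comes from \eqref{eq:linear-zeta-bound} in a doubled chart at a fixed radius $R$, with the numerator controlled by twice the total mass and the bound transferred back through the doubling and the bi-Lipschitz bounds of $\Psi$; then \eqref{eq:CP-interface} at radius $2r$ gives the $\nustar$ bound, and \eqref{eq:CO-interface} combined with \eqref{eq:CP-interface} gives the lower bound with $c_*=\eta_c/(2C_c)$. (Minor point: since $\upzeta_j^a$ already contains the factor $a$, the one-sided chart measure is exactly the pullback of $\zetastar$, so no Jacobian constant is needed at that step.)
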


\begin{proof}
Choose $R\in(0,\rho)$ such that $B_R(p)$ remains in one doubled chart for
every $p$ in a fixed smaller boundary arc.  By
\eqref{eq:linear-zeta-bound},
\begin{equation}\label{eq:boundary-zeta-upper-chart}
 \zetastar^a(B_r(p))
 \leq e^{\frac{BR}{2}}\frac{r}{R}\zetastar^a(\overline B_R(p))
 \leq\frac{2e^{\frac{BR}{2}}M_0}{R}r.
\end{equation}
Using \eqref{eq:curved-double-measure} and the bi-Lipschitz bounds of
$\Psi$, shrink $r_*$ so that
\begin{equation}\label{eq:boundary-zeta-upper-physical}
 \zetastar(B_r(p)\cap\overline\Omega)\leq C_1r
 \quad(p\in\Gamma',\ 0<r<2r_*).
\end{equation}
Then \eqref{eq:CP-interface} at radius $2r$ yields
\begin{equation}\label{eq:boundary-nu-upper}
 \nustar(B_r(p)\cap\overline\Omega)
 \leq C_c\zetastar(B_{2r}(p)\cap\overline\Omega)
 \leq2C_cC_1r.
\end{equation}
If $p\in\Sstar$, equations \eqref{eq:CO-interface} and
\eqref{eq:CP-interface} prove
\begin{equation}\label{eq:boundary-zeta-lower}
 \frac{\eta_c}{2}r
 \leq\nustar\left(B_{\frac{r}{2}}(p)\cap\overline\Omega\right)
 \leq C_c\zetastar(B_r(p)\cap\overline\Omega).
\end{equation}
Finally,
\begin{equation}\label{eq:zeta-below-nu}
 0\leq\upzeta_j\leq\upnu_j
 \quad\Rightarrow\quad0\leq\zetastar\leq\nustar.
\end{equation}
Equations \eqref{eq:boundary-zeta-upper-physical}--
\eqref{eq:zeta-below-nu} prove \eqref{eq:two-sided-boundary-density}.
\end{proof}

\begin{lem}
\label{lem:boundary-AC-covering}
Let $\lambda$ be a non-negative Radon measure on $\overline\Omega$.  If, on every
compact boundary arc, $\lambda(B_r(p))\leq Cr$ for all sufficiently small
$r$, then
its boundary restriction is absolutely continuous and satisfies
\[
 \lambda\llcorner\partial\Omega
 \ll\cH^1\llcorner\partial\Omega.
\]
Its Radon--Nikodym density is locally bounded above.
\end{lem}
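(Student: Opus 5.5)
The plan is to prove Lemma~\ref{lem:boundary-AC-covering} by the standard density-comparison argument for Radon measures, carried out on the boundary curve $\partial\Omega$ and not in $\R^2$. Fix a compact boundary arc $\Gamma'$ with constants $C$ and $r_0$ such that $\lambda(B_r(p))\leq Cr$ for $p\in\Gamma'$ and $0<r<r_0$. It suffices to show that $\lambda(K)\leq C'\cH^1(K)$ for every compact $K\subset\Gamma'$. Both measures are Radon, so this inequality extends to all Borel subsets of $\Gamma'$ by outer regularity. It follows that $\lambda\llcorner\Gamma'\ll\cH^1\llcorner\Gamma'$ with Radon--Nikodym density at most $C'$. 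Covering $\partial\Omega$ by finitely many such arcs then gives the lemma.

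\textbf{Step 1 (covering).} Let $K\subset\Gamma'$ be compact and let $\delta>0$. We choose an open set $U\supset K$, relative to $\partial\Omega$, with $\cH^1(U)\leq\cH^1(K)+\delta$. Since $\partial\Omega$ is $C^3$ and $\Gamma'$ is compact, there is $r_1\leq r_0$ such that the arc-length parametrization is bi-Lipschitz with constants close to $1$ on pieces of length $4r_1$. In particular, for $p\in\Gamma'$ and $r<r_1$, the set $B_r(p)\cap\partial\Omega$ is a single subarc whose length lies between $2r$ and $4r$. We cover $K$ by the balls $B_r(p)$ with $p\in K$, $r<r_1$, and $B_r(p)\cap\partial\Omega\subset U$. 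By compactness we may pass to a finite subfamily. The Vitali $5r$-covering lemma, or simply the one-dimensional covering lemma applied to the arcs in the parameter, then gives disjoint balls $B_{r_i}(p_i)$ such that $K\subset\bigcup_i B_{5r_i}(p_i)$. Shrinking $r_1$ by a factor of $5$ keeps the enlarged radii admissible.

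\textbf{Step 2 (comparison).} By the hypothesis,
\[
 \lambda(K)\leq\sum_i\lambda(B_{5r_i}(p_i))\leq5C\sum_i r_i.
\]
The arcs $B_{r_i}(p_i)\cap\partial\Omega$ are pairwise disjoint, each is contained in $U$, and each has length at least $2r_i$ by Step~1. Hence
\[
 \sum_i r_i\leq\tfrac12\cH^1(U)\leq\tfrac12(\cH^1(K)+\delta).
\]
Letting $\delta\downarrow0$ gives $\lambda(K)\leq\tfrac52C\,\cH^1(K)$. In particular $\lambda$ assigns zero mass to $\cH^1$-null subsets of $\Gamma'$, and the density bound $C'=\tfrac52 C$ follows.

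\textbf{Main obstacle.} The only point needing care is the geometric lower bound $\cH^1(B_r(p)\cap\partial\Omega)\geq2r$, up to a factor close to one, uniformly for $p\in\Gamma'$. This is where the regularity of $\partial\Omega$ and the uniform radius $r_1$ are used. I would obtain it from a tubular or graph representation of $\partial\Omega$ near each point of $\Gamma'$, which is available because $\partial\Omega$ is compact and $C^2$. Note that the hypothesis bounds $\lambda$ on full planar balls, which is stronger than what is needed, since $\lambda\llcorner\partial\Omega(B_r(p))\leq\lambda(B_r(p))$.
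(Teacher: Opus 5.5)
Your proof is correct, but it is organized differently from the paper's.

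\textbf{The paper's route.} The paper proves absolute continuity directly on null sets. If $A$ lies in a compact arc and $\cH^1(A)=0$, it covers $A$ by boundary-centred balls with $\sum_i r_i<\delta$, so $\lambda(A)\leq C\delta$. No disjointness or Vitali selection is needed, because only an upper bound on $\sum_i r_i$ enters. The density bound is then obtained separately, by differentiating $\lambda\llcorner\partial\Omega$ with respect to $\cH^1\llcorner\partial\Omega$ on the curve. Using $\lambda(B_r(p))\leq Cr$ and $\cH^1(B_r(p)\cap\partial\Omega)\geq 2r$, this gives a density of at most $C/2$ almost everywhere.

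\textbf{Your route.} You prove the quantitative comparison $\lambda(K)\leq\frac52 C\,\cH^1(K)$ for compact $K$. You do this by selecting disjoint balls whose boundary arcs lie in an open set $U$ of nearly minimal length. This gives absolute continuity and the $L^\infty$ bound on the density in one step, without a differentiation theorem. The cost is the disjoint covering step and a worse constant ($\frac52C$ instead of $C/2$). Both arguments use the same geometric input, namely the lower length bound for $B_r(p)\cap\partial\Omega$.

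\textbf{Two small remarks.} First, passing from compact sets to Borel sets uses inner regularity of $\lambda$: $\lambda(A)=\sup_{K\subset A}\lambda(K)$. Outer regularity alone does not do this. Second, you take $C$ and $r_0$ uniform over $p\in\Gamma'$. This is exactly what Proposition~\ref{prop:boundary-linear-density} supplies when the lemma is applied. Your Vitali argument would also go through with pointwise admissible radii, since only the selected centres need them.
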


\begin{proof}
If $A$ is a subset of a compact arc with $\cH^1(A)=0$, cover it by
boundary-centred balls $B_{r_i}(p_i)$ with
$\sum_i r_i<\delta$.  Then
\[
 \lambda(A)\leq\sum_i\lambda(B_{r_i}(p_i))
 \leq C\sum_i r_i<C\delta.
\]
Let $\delta\downarrow0$.  The density bound follows from differentiation
on the $C^1$ curve.
\end{proof}

\section{Proof of Theorem~\ref{thm:main}}
\label{sec:proof-main}

This section assembles the preceding estimates in five steps and proves
Theorem~\ref{thm:main}.

\subsection{Support and convergence away from the interface}

We first identify the common support of the limiting measures and establish
uniform convergence away from it.

Proposition~\ref{prop:unconditional} provides the phase and measure
compactness.  Lemma~\ref{lem:global-neumann-Linf}, a finite conformal cover
of $\partial\Omega$, and Corollary~\ref{cor:var-measure-package} yield
\eqref{eq:CO-interface}--\eqref{eq:CP-interface}.  Define
\begin{equation}\label{eq:Sstar-definition}
 \Sstar:=\supp\nustar.
\end{equation}
If $x\notin\Sstar$, choose $r>0$ such that
$\nustar(\overline{B_{2r}(x)}\cap\overline\Omega)=0$.  At radii carrying no
limiting boundary mass, \eqref{eq:var-small-inner} and scaling yield
\begin{equation}\label{eq:off-interface-energy-small}
 \cE_{\eps_j}(u_j;B_r(x)\cap\Omega)
 \leq C\frac{\eps_j}{r}
 \cE_{\eps_j}(u_j;B_{2r}(x)\cap\Omega)
 =o(\eps_j).
\end{equation}
The confinement conclusion of Theorem~\ref{thm:var-clearing-out} and the
connectedness of the smaller ball select $\upsigma_j\in\Sigma$ such that
\begin{equation}\label{eq:off-interface-confinement}
 \abs{u_j-\upsigma_j}\leq\frac{\upmu_V}{2},
 \quad
 \int_{B_r(x)\cap\Omega}V(u_j)\,\dd x
 \leq\eps_j\cE_{\eps_j}(u_j;B_r(x)\cap\Omega)
 =o(\eps_j^2).
\end{equation}
The local quadratic bound for $V$ implies
\[
 \int_{B_r(x)\cap\Omega}\abs{u_j-\upsigma_j}^2\,\dd x\to0.
\]
Since $u_j\to u_0$ in $L^1(\Omega)$ and $\Sigma$ is finite, the sequence
$\upsigma_j$ is eventually constant: two distinct wells occurring along
infinite subsequences would both agree almost everywhere with $u_0$ on
$B_r(x)\cap\Omega$.  Denote the eventual value by
$\upsigma=\upsigma(x)$ and discard the finitely many preceding indices.
In an interior ball, $w_j:=\abs{u_j-\upsigma}^2$ satisfies
\begin{equation}\label{eq:off-interface-subharmonic}
 \Delta w_j
 =2\abs{\nabla u_j}^2
 +2\eps_j^{-2}\nabla V(u_j)\cdot(u_j-\upsigma)\geq0,
 \quad
 w_j\leq c_V^{-1}V(u_j).
\end{equation}
Hence
\begin{equation}\label{eq:off-interface-uniform-convergence}
 \sup_{B_{\frac{r}{2}}(x)}\abs{u_j-\upsigma}^2
 \leq\frac{C}{r^2}\int_{B_r(x)}w_j\,\dd x
 \leq\frac{C}{r^2}\int_{B_r(x)}V(u_j)\,\dd x
 \to0.
\end{equation}
For $x\in\partial\Omega$, the even reflection in a conformal chart obeys
the same inequality
\begin{equation}\label{eq:off-interface-boundary-subharmonic}
 \Delta\abs{v_j-\upsigma}^2
 =2\abs{\nabla v_j}^2
 +2a\eps_j^{-2}\nabla V(v_j)\cdot(v_j-\upsigma)\geq0,
\end{equation}
so \eqref{eq:off-interface-uniform-convergence} follows from the full-ball
mean-value inequality.  Finally,
\begin{equation}\label{eq:support-equality-proof}
 \zetastar\leq\nustar
 \Rightarrow\supp\zetastar\subset\Sstar,
 \quad
 \eqref{eq:CP-interface}
 \Rightarrow\Sstar\subset\supp\zetastar,
\end{equation}
and therefore $\Sstar=\supp\nustar=\supp\zetastar$.

\subsection{Length and rectifiability}

The clearing-out lower bound controls the length and completes the
rectifiability argument at the boundary.

For $\delta\in(0,r_c)$, the fine $5r$ covering theorem yields pairwise
disjoint balls $B_{r_i}(x_i)$, $x_i\in\Sstar$, $r_i<\delta$, such that
\begin{equation}\label{eq:five-r-cover}
 \Sstar\subset\bigcup_iB_{5r_i}(x_i),
 \quad
 \eta_cr_i\leq\nustar(B_{r_i}(x_i)\cap\overline\Omega).
\end{equation}
Thus
\begin{equation}\label{eq:Sstar-length-bound}
 \cH^1_{10\delta}(\Sstar)
 \leq10\sum_ir_i
 \leq\frac{10}{\eta_c}\sum_i\nustar(B_{r_i}(x_i))
 \leq\frac{10M_0}{\eta_c}.
\end{equation}
Letting $\delta\downarrow0$ yields the asserted $\cH^1$ bound.  If
$U_m\subset\subset\Omega$ exhausts $\Omega$, interior clearing-out and
Theorem~\ref{thm:bethuel-interior} yields
\begin{equation}\label{eq:interior-carrier-identification}
 \Sstar\cap U_m=S_{U_m}
 \quad\text{up to an }\cH^1\text{-null set}.
\end{equation}
Therefore $\Sstar\cap\Omega$ is countably rectifiable, while
$\Sstar\cap\partial\Omega\subset\partial\Omega$ is rectifiable because
$\partial\Omega$ is $C^3$.

\subsection{Absolute continuity and density bounds}

We next combine the interior theorem with the boundary density estimates to
obtain global absolute continuity and uniform density comparisons.

Interior absolute continuity follows from
Theorem~\ref{thm:bethuel-interior}.  Boundary absolute continuity follows
from Proposition~\ref{prop:boundary-linear-density} and
Lemma~\ref{lem:boundary-AC-covering}.  Hence
\begin{equation}\label{eq:global-AC-first}
 \nustar=e\,\cH^1\llcorner\Sstar,
 \quad
 \zetastar=\Uptheta\,\cH^1\llcorner\Sstar.
\end{equation}
Since $\abs{\upmu_{\star,ij}}\leq2\nustar$,
\begin{equation}\label{eq:tensor-AC}
 \upmu_{\star,ij}=m_{ij}\,\cH^1\llcorner\Sstar,
 \quad\abs{m_{ij}}\leq2e.
\end{equation}
By Proposition~\ref{prop:boundary-linear-density}, compactness of
$\partial\Omega$, and the total mass bound, after enlarging $C$ we have
\begin{equation}\label{eq:all-scale-boundary-upper}
 \zetastar(B_s(p)\cap\overline\Omega)\leq Cs
 \quad\text{for every }p\in\partial\Omega\text{ and }s>0.
\end{equation}
Indeed, the proposition applies at sufficiently small radii, while for all
larger radii the estimate follows from
$\zetastar(\overline\Omega)\leq M_0$.

To make the upper bound uniform up to the boundary, take
$x\in\Omega$, $d:=\dist(x,\partial\Omega)$, and
$p\in\partial\Omega$ with $\abs{x-p}=d$.  If
$0<r<\frac{d}{2}$, the interior
version of \eqref{eq:gronwall-monotonicity}, with $a\equiv1$, yields
\begin{equation}\label{eq:interior-centre-small-radius}
\begin{aligned}
 \frac{\zetastar(B_r(x))}{r}
 &\leq\frac{\zetastar\left(B_{\frac{d}{2}}(x)\right)}{\frac{d}{2}}
 \\
 &\leq\frac{2\zetastar\left(B_{\frac{3d}{2}}(p)\right)}{d}\leq C.
\end{aligned}
\end{equation}
If $r\geq\frac{d}{2}$, then
\begin{equation}\label{eq:interior-centre-large-radius}
 B_r(x)\subset B_{3r}(p),
 \quad
 \zetastar(B_r(x))\leq\zetastar(B_{3r}(p))\leq Cr.
\end{equation}
Thus the first bound in \eqref{eq:global-linear-upper} holds at every
radius.  Applying \eqref{eq:CP-interface} at radius $2r$ proves the second
bound for $0<r<\frac{r_c}{2}$; for $r\geq\frac{r_c}{2}$, it follows from
$\nustar(\overline\Omega)\leq M_0$.  We have therefore shown
\begin{equation}\label{eq:global-linear-upper}
 \zetastar(B_r(x))\leq Cr,
 \quad
 \nustar(B_r(x))\leq Cr
 \quad(x\in\overline\Omega,\ r>0).
\end{equation}
For $x\in\Sstar$, \eqref{eq:CO-interface} and
\eqref{eq:CP-interface} yield
\begin{equation}\label{eq:global-linear-lower}
 \eta_cr\leq\nustar(B_r(x)),
 \quad
 \frac{\eta_c}{2}r
 \leq C_c\zetastar(B_r(x))
 \quad(0<r<r_c).
\end{equation}
Differentiating \eqref{eq:global-linear-upper}--
\eqref{eq:global-linear-lower} with respect to
$\cH^1\llcorner\Sstar$ yields
\begin{equation}\label{eq:density-comparison-proof}
 0<c_0\leq\Uptheta\leq e\leq C_0\Uptheta\leq C_1
 \quad\text{for }\cH^1\text{-a.e. }x\in\Sstar.
\end{equation}

\subsection{Discrepancy and free-boundary stationarity}

The interior and boundary stress identities now determine the limiting
varifold and its first variation.

The interior identities of Theorem~\ref{thm:bethuel-interior} and the
boundary identities \eqref{eq:boundary-discrepancy-measure} yield
\begin{equation}\label{eq:global-discrepancy-proof}
 2\Uptheta=m_{nn}-m_{\tau\tau},
 \quad m_{n\tau}=0,
 \quad e=m_{nn}
 \quad\cH^1\text{-a.e. on }\Sstar.
\end{equation}
Hence, in the $(\tau,n)$ frame,
\begin{equation}\label{eq:stress-varifold-identification}
 \frac{\dd T}{\dd(\cH^1\llcorner\Sstar)}
 =eI-m
 =\begin{pmatrix}2\Uptheta&0\\0&0\end{pmatrix}_{(\tau,n)}
 =2\Uptheta\,\tau\otimes\tau,
 \quad
 T=2(\tau\otimes\tau)\zetastar.
\end{equation}
For every $X\in C^1(\overline\Omega;\R^2)$ with
$X\cdot\nOmega=0$ on $\partial\Omega$, equations
\eqref{eq:inner-variation} and \eqref{eq:stress-varifold-identification}
yield
\begin{equation}\label{eq:stationarity-proof}
\begin{aligned}
 0
 &=\int_{\overline\Omega}T:DX
 =2\int_{\Sstar}(\tau\otimes\tau):DX\,\dd\zetastar
 \\
 &=2\int_{\Sstar}\diver_{T_x\Sstar}X\,\dd\zetastar
 =2\delta\Vbold(\Sstar,\Uptheta)(X).
\end{aligned}
\end{equation}

\subsection{Branch weights and boundary balance}

Finally, stationarity determines the weights on isolated branches and the
balance law at each finite-type boundary junction.

Let $\gamma:(a,b)\to\Sstar$ be an isolated interior $C^1$ arc parametrized
by arclength, and write $\tau=\gamma'$.  In a local graph chart, the trace
$g(s)v$ of a scalar function $g\in C_c^1((a,b))$ and a constant vector
$v\in\R^2$ has a $C^1$ ambient extension supported near the arc.
Stationarity implies
\begin{equation}\label{eq:branch-constant-weight}
 0=\int_a^b\Uptheta(s)\,
 \tau(s)\cdot v\,g'(s)\,\dd s
 \quad\text{for every }v\in\R^2.
\end{equation}
Thus $\partial_s(\Uptheta\tau)=0$ in
$\mathscr D'((a,b);\R^2)$.  Since $\abs{\tau}=1$ and $\Uptheta>0$, both
$\Uptheta$ and $\tau$ are constant on the arc.  On an isolated
boundary-supported arc, use instead the $C^2$ boundary tangent field; the
identity $\tau\cdot\tau'=0$ yields
$\int_a^b\Uptheta g'\,\dd s=0$.  Therefore every punctured branch in
Definition~\ref{defn:finite-type-boundary} carries a positive constant
weight $\Uptheta_\ell$.

Fix a finite-type junction $p$, let $v\in T_p\partial\Omega$, and choose
$\widetilde v\in C^1(\overline\Omega;\R^2)$ such that
\begin{equation}\label{eq:tangent-extension}
 \widetilde v(p)=v,
 \quad\widetilde v\cdot\nOmega=0\text{ on }\partial\Omega.
\end{equation}
For $\chi\in C_c^\infty([0,+\infty))$, $\chi=1$ near zero, set
\begin{equation}\label{eq:junction-test-field}
 X_\rho(x):=\chi\left(\frac{\abs{x-p}}{\rho}\right)\widetilde v(x).
\end{equation}
The $C^1$ parametrizations satisfy
\begin{equation}\label{eq:branch-first-order-expansion}
 \gamma_\ell(s)=p+s\tau_\ell+o(s),
 \quad
 \gamma_\ell'(s)=\tau_\ell+o(1).
\end{equation}
After the substitution $s=\rho t$,
\begin{equation}\label{eq:single-branch-endpoint-limit}
\begin{aligned}
 \lim_{\rho\downarrow0}
 \int_{\gamma_\ell}\diver_{T\gamma_\ell}X_\rho\,
 \Uptheta_\ell\,\dd\cH^1
 &=\Uptheta_\ell(v\cdot\tau_\ell)
 \int_0^\infty\chi'(t)\,\dd t
 \\
 &=-\Uptheta_\ell v\cdot\tau_\ell,
\end{aligned}
\end{equation}
while
\begin{equation}\label{eq:junction-smooth-error}
 \left|\int_{\Sstar\cap B_\rho(p)}
 (\tau\otimes\tau):D\widetilde v\,
 \chi\left(\frac{\abs{x-p}}{\rho}\right)\dd\zetastar\right|
 \leq C\rho\norm{D\widetilde v}_\infty\to0.
\end{equation}
Summing \eqref{eq:single-branch-endpoint-limit} and using
\eqref{eq:stationarity-proof} yields
\begin{equation}\label{eq:junction-balance-proof}
 v\cdot\sum_\ell\Uptheta_\ell\tau_\ell=0
 \quad\forall v\in T_p\partial\Omega
 \quad\Leftrightarrow\quad
 P_{T_p\partial\Omega}\left(\sum_\ell\Uptheta_\ell\tau_\ell\right)=0.
\end{equation}
If exactly one non-boundary branch and no other branch are present, then
$P_{T_p\partial\Omega}\tau_1=0$, hence
$\tau_1\perp T_p\partial\Omega$.\qed

\begin{rem}
The theorem does not claim $\zetastar(\partial\Omega)=0$.  The
boundary-clustered scalar examples described in \eqref{eq:scalar-embedding}
belong to the vectorial class and show that such a claim, as well as
unconditional pointwise orthogonality, would be false.
\end{rem}

\section*{Acknowledgments}

This work is partially supported by the National Key R\&D Program of China
under Grant 2023YFA1008801.

The authors acknowledge the use of AI tools. All mathematical arguments and
proofs in the final manuscript were checked and written by the authors.
 
\bibliographystyle{plain}
\bibliography{references}

@article{AllenCahn1979,
  author  = {Allen, Samuel M. and Cahn, John W.},
  title   = {A microscopic theory for antiphase boundary motion and its application to antiphase domain coarsening},
  journal = {Acta Metallurgica},
  volume  = {27},
  number  = {6},
  year    = {1979},
  pages   = {1085--1095},
  doi     = {10.1016/0001-6160(79)90196-2}
}

@article{Baldo1990,
  author  = {Baldo, Sisto},
  title   = {Minimal interface criterion for phase transitions in mixtures of {Cahn--Hilliard} fluids},
  journal = {Annales de l'Institut Henri Poincar\'e C, Analyse non lin\'eaire},
  volume  = {7},
  number  = {2},
  year    = {1990},
  pages   = {67--90},
  doi     = {10.1016/S0294-1449(16)30304-3}
}

@article{Bethuel2025,
  author  = {Bethuel, Fabrice},
  title   = {Asymptotics for $2$-dimensional vectorial {Allen--Cahn} systems},
  journal = {Acta Mathematica},
  volume  = {234},
  number  = {2},
  year    = {2025},
  pages   = {189--314},
  doi     = {10.4310/ACTA.2025.v234.n2.a1}
}

@article{CahnHilliard1958,
  author  = {Cahn, John W. and Hilliard, John E.},
  title   = {Free energy of a nonuniform system. {I}. {Interfacial} free energy},
  journal = {The Journal of Chemical Physics},
  volume  = {28},
  number  = {2},
  year    = {1958},
  pages   = {258--267},
  doi     = {10.1063/1.1744102}
}

@article{DeMasi2021,
  author  = {De Masi, Luigi},
  title   = {Rectifiability of the free boundary for varifolds},
  journal = {Indiana University Mathematics Journal},
  volume  = {70},
  number  = {6},
  year    = {2021},
  pages   = {2603--2651},
  doi     = {10.1512/iumj.2021.70.9401}
}

@article{FonsecaTartar1989,
  author  = {Fonseca, Irene and Tartar, Luc},
  title   = {The gradient theory of phase transitions for systems with two potential wells},
  journal = {Proceedings of the Royal Society of Edinburgh. Section A. Mathematics},
  volume  = {111},
  number  = {1--2},
  year    = {1989},
  pages   = {89--102},
  doi     = {10.1017/S030821050002504X}
}

@article{GruterJost1986,
  author  = {Gr\"uter, Michael and Jost, J\"urgen},
  title   = {{Allard} type regularity results for varifolds with free boundaries},
  journal = {Annali della Scuola Normale Superiore di Pisa. Classe di Scienze. Serie IV},
  volume  = {13},
  number  = {1},
  year    = {1986},
  pages   = {129--169}
}

@article{HutchinsonTonegawa2000,
  author  = {Hutchinson, John E. and Tonegawa, Yoshihiro},
  title   = {Convergence of phase interfaces in the van der {Waals--Cahn--Hilliard} theory},
  journal = {Calculus of Variations and Partial Differential Equations},
  volume  = {10},
  number  = {1},
  year    = {2000},
  pages   = {49--84},
  doi     = {10.1007/PL00013453}
}

@article{Ilmanen1993,
  author  = {Ilmanen, Tom},
  title   = {Convergence of the {Allen--Cahn} equation to {Brakke}'s motion by mean curvature},
  journal = {Journal of Differential Geometry},
  volume  = {38},
  number  = {2},
  year    = {1993},
  pages   = {417--461},
  doi     = {10.4310/JDG/1214454300}
}

@article{Kagaya2019,
  author  = {Kagaya, Takashi},
  title   = {Convergence of the {Allen--Cahn} equation with a zero {Neumann} boundary condition on non-convex domains},
  journal = {Mathematische Annalen},
  volume  = {373},
  number  = {3--4},
  year    = {2019},
  pages   = {1485--1528},
  doi     = {10.1007/s00208-018-1720-x}
}

@article{LiPariseSarnataro2024,
  author  = {Li, Martin Man-Chun and Parise, Davide and Sarnataro, Lorenzo},
  title   = {Boundary behavior of limit-interfaces for the {Allen--Cahn} equation on {Riemannian} manifolds with {Neumann} boundary condition},
  journal = {Archive for Rational Mechanics and Analysis},
  volume  = {248},
  year    = {2024},
  pages   = {Paper No. 124},
  doi     = {10.1007/s00205-024-02070-z}
}

@article{MalchiodiNiWei2007,
  author  = {Malchiodi, Andrea and Ni, Wei-Ming and Wei, Juncheng},
  title   = {Boundary-clustered interfaces for the {Allen--Cahn} equation},
  journal = {Pacific Journal of Mathematics},
  volume  = {229},
  number  = {2},
  year    = {2007},
  pages   = {447--468},
  doi     = {10.2140/pjm.2007.229.447}
}

@article{MalchiodiWei2007,
  author  = {Malchiodi, Andrea and Wei, Juncheng},
  title   = {Boundary interface for the {Allen--Cahn} equation},
  journal = {Journal of Fixed Point Theory and Applications},
  volume  = {1},
  number  = {2},
  year    = {2007},
  pages   = {305--336},
  doi     = {10.1007/s11784-007-0016-7}
}

@article{MizunoTonegawa2015,
  author  = {Mizuno, Masashi and Tonegawa, Yoshihiro},
  title   = {Convergence of the {Allen--Cahn} equation with {Neumann} boundary conditions},
  journal = {SIAM Journal on Mathematical Analysis},
  volume  = {47},
  number  = {3},
  year    = {2015},
  pages   = {1906--1932},
  doi     = {10.1137/140987808}
}

@article{Moser2023,
  author  = {Moser, Maximilian},
  title   = {Convergence of the scalar- and vector-valued {Allen--Cahn} equation to mean curvature flow with {$90^\circ$}-contact angle in higher dimensions, part {I}: Convergence result},
  journal = {Asymptotic Analysis},
  volume  = {131},
  number  = {3--4},
  year    = {2023},
  pages   = {297--383},
  doi     = {10.3233/ASY-221775}
}

@article{Modica1985,
  author  = {Modica, Luciano},
  title   = {A gradient bound and a {Liouville} theorem for nonlinear {Poisson} equations},
  journal = {Communications on Pure and Applied Mathematics},
  volume  = {38},
  number  = {5},
  year    = {1985},
  pages   = {679--684},
  doi     = {10.1002/cpa.3160380515}
}

@article{Modica1987,
  author  = {Modica, Luciano},
  title   = {The gradient theory of phase transitions and the minimal interface criterion},
  journal = {Archive for Rational Mechanics and Analysis},
  volume  = {98},
  number  = {2},
  year    = {1987},
  pages   = {123--142},
  doi     = {10.1007/BF00251230}
}

@article{ModicaMortola1977,
  author  = {Modica, Luciano and Mortola, Stefano},
  title   = {Un esempio di {$\Gamma$}-convergenza},
  journal = {Bollettino della Unione Matematica Italiana B},
  series  = {5},
  volume  = {14},
  number  = {1},
  year    = {1977},
  pages   = {285--299}
}

@book{Pommerenke1992,
  author    = {Pommerenke, Christian},
  title     = {Boundary Behaviour of Conformal Maps},
  series    = {Grundlehren der mathematischen Wissenschaften},
  volume    = {299},
  publisher = {Springer-Verlag},
  address   = {Berlin},
  year      = {1992},
  isbn      = {978-3-540-54751-8},
  doi       = {10.1007/978-3-662-02770-7}
}

@article{Warschawski1935,
  author  = {Warschawski, Stefan E.},
  title   = {On the higher derivatives at the boundary in conformal mapping},
  journal = {Transactions of the American Mathematical Society},
  volume  = {38},
  number  = {2},
  year    = {1935},
  pages   = {310--340},
  doi     = {10.1090/S0002-9947-1935-1501813-X}
}

@book{Simon1983,
  author    = {Simon, Leon},
  title     = {Lectures on Geometric Measure Theory},
  series    = {Proceedings of the Centre for Mathematical Analysis, Australian National University},
  volume    = {3},
  publisher = {Australian National University},
  address   = {Canberra},
  year      = {1983},
  isbn      = {0-86784-429-9}
}

@article{Tonegawa2003,
  author  = {Tonegawa, Yoshihiro},
  title   = {Domain dependent monotonicity formula for a singular perturbation problem},
  journal = {Indiana University Mathematics Journal},
  volume  = {52},
  number  = {1},
  year    = {2003},
  pages   = {69--84},
  doi     = {10.1512/iumj.2003.52.2351}
}

\end{document}